\DeclareMathOperator*{\argmax}{arg\,max}
\theoremstyle{plain}
\newtheorem{proposition}{Proposition}
\theoremstyle{remark}
\newtheorem*{remark}{Remark} 
\newcommand{\T}{\textsf{T}} 
\begin{document}
\title{On the three-person game \textit{baccara banque}}
\author{S. N. Ethier\thanks{Department of Mathematics, University of Utah, 155 South 1400 East, Salt Lake City, UT 84112, USA. e-mail: ethier@math.utah.edu.  Partially supported by a grant from the Simons Foundation (209632).}\ \ and Jiyeon Lee\thanks{Department of Statistics, Yeungnam University, 214-1 Daedong, Kyeongsan, Kyeongbuk 712-749, South Korea. e-mail: leejy@yu.ac.kr.  Supported by the Basic Science Research Program through the National Research Foundation of Korea (NRF) funded by the Ministry of Science, ICT \& Future Planning (No.~2013R1A1A3A04007670).}}
\date{}
\maketitle

\begin{abstract}
\textit{Baccara banque} is a three-person zero-sum game parameterized by $\theta\in(0,1)$.  A study of the game by Downton and Lockwood claimed that the Nash equilibrium is of only academic interest.  Their preferred alternative is what we call the \textit{independent cooperative equilibrium}.  But this solution exists only for certain $\theta$.  A third solution, which we call the \textit{correlated cooperative equilibrium}, always exists.  Under a ``with replacement'' assumption as well as a simplifying assumption concerning the information available to one of the players, we derive each of the three solutions for all $\theta$..\medskip\par

\noindent \textit{AMS 2010 subject classification}: Primary 91A06; secondary 91A60. \vglue1.5mm\par
\noindent \textit{Key words and phrases}: baccara banque, baccara \`a deux tableaux, three-person game, sampling with replacement, Nash equilibrium, independent cooperative equilibrium, correlated cooperative equilibrium.
\end{abstract}

\section{Introduction}\label{Intro}

The three-person game \textit{baccara banque} (or \textit{baccara \`a deux tableaux}) is closely related to the  two-person game \textit{baccara chemin de fer}.  In fact, \textit{baccara banque} has been described as ``a game in which a banker plays \textit{chemin-de-fer} simultaneously against two players'' (Downton and Lockwood 1976).  Game-theoretic analyses of \textit{baccara chemin de fer} have been provided by Kemeny and Snell (1957), Foster (1964a), Downton and Lockwood (1975), Deloche and Oguer (2007), and Ethier and G\'amez (2013).  The more complicated game \textit{baccara banque} has received less attention.  Foster (1964b) was the first to approach the game from the perspective of game theory, though the details of his research were not published.  Kendall and Murchland (1964) used simulation to study the game.  Downton and Holder (1972) discussed the special case of highly unbalanced stakes.  Judah and Ziemba (1983) analyzed a variant of the game in which two of the three players have mandated strategies.  Downton and Lockwood (1976) provided the most detailed study of \textit{baccara banque}, although it is partially incorrect.

To explain the purpose of this paper, we must first describe the rules of \textit{baccara banque}.  There are three players, Player~1, Player~2, and Banker.  Two bets are available to participants, a bet on the hand of Player~1 and a bet on the hand of Player~2.  Six 52-card decks are mixed together and dealt from a \textit{sabot} or shoe.  Denominations A, 2--9, 10, J, Q, K have values 1, 2--9, 0, 0, 0, 0, respectively, and suits are irrelevant.  The total of a hand, comprising two or three cards, is the sum of the values of the cards, modulo 10.  In other words, only the final digit of the sum is used to evaluate a hand.  Two cards are dealt face down to each of Player~1, Player~2, and Banker.  A two-card total of 8 or 9 is a \textit{natural}.  First, if either Banker or both Players have naturals, play ends.  If only one Player has a natural and Banker does not, that Player wins the amount bet from Banker, while play continues between the other Player and Banker.  Next, if neither Player~1 nor Banker has a natural, Player~1 has the option of drawing a third card.  Then, if neither Player~2 nor Banker has a natural, Player~2 has the option of drawing a third card.  In either case, the Player must draw on a two-card total of 4 or less and stand on a two-card total of 6 or 7.  When his two-card total is 5, he is free to draw or stand as he chooses.  Any third card is dealt face up.  Finally, if at least one Player and Banker fail to have naturals, Banker has the option of drawing a third card, and his strategy is unconstrained.  Bets are then settled, both Player~1 vs.\ Banker and Player~2 vs.\ Banker.  In both competitions, the higher total wins.  Winning bets are paid by Banker at even odds.  Losing bets are collected by Banker.  Equal totals result in a \textit{push} (no money changes hands).

There is a subtle point in the rules that is left ambiguous in most descriptions of the game, concerning the information available to Player~2 when he makes his decision.  In what Downton and Lockwood (1976) called the traditional form of the game, Player~2 sees Player~1's third card, if any, or that he has a natural.  The traditional rule is unambiguously stated in Morehead and Mott-Smith (1950, pp.~522--523), for example.  In a more recent variation, Player~2 would know only Player~1's intention to draw or stand, or that he has a natural.  This rule has been used in Great Britain (Downton and Lockwood 1976) and in Monte Carlo (Barnhart 1980, pp.~42--43).

Thus, we have a three-person zero-sum game.  Let us assume, as did Kemeny and Snell (1957), that cards are dealt with replacement and that only two-card totals (not compositions) are seen.  Player~1 has two pure strategies, draw or stand on two-card totals of 5.  Assuming the traditional form of the game, Player~2 also has a draw-or-stand decision on two-card totals of 5 in each of 12 possible situations (Player~1 third-card value 0--9; or stand or natural).  Banker then has a draw-or-stand decision in each of $(12\times 12-1)\times 8=1144$ possible situations (12 possibilities for Player~1, 12 for Player~2, and 8 for Banker, except when both Players have naturals).  Therefore, the game is a $2\times 2^{12}\times 2^{1144}$ trimatrix game, which is zero-sum.  Under the rules of the more recent variation, the game would be $2\times 2^{3}\times 2^{1144}$.

A simplified model assumes that Player~2 ignores his information about Player~1's third card (or stand or natural).  We then have a $2\times 2\times 2^{1144}$ trimatrix game, which is again zero-sum.  

In any case, the payoffs depend on $\theta\in(0,1)$, where the amounts bet on Player~1's and Player~2's hands are in the proportions $\theta:1-\theta$.  To see intuitively why $\theta$ plays an important role, suppose Player~1's third card is 7 and Player~2's third card is 8.  If Banker were playing \textit{baccara chemin de fer} against Player~1, he would draw on 0--6 and stand on 7.  If he were playing \textit{baccara chemin de fer} against Player~2, he would draw on 0--2 and stand on 3--7.  Notice that Banker would act differently against the two Players if his two-card total were 3--6.  In \textit{baccarat banque}, however, he must make the same move (draw or stand) against both Players.  Le Myre (1935, p.~114) called this a ``cruel embarrassment'' for Banker.  The parameter $\theta$ determines his correct choice in these conflicting situations.

Downton and Lockwood (1976) claimed that the Nash equilibrium is of only academic interest because ``it implies an attitude to the game by all three participants, which is unlikely to be realized in practice.''  Presumably, they meant that the two Players regard themselves as competing against Banker but not against each other.  Downton and Lockwood's preferred alternative is what we call the \textit{independent cooperative equilibrium}, that is, the solution of the two-person zero-sum game in which Players~1 and 2, acting independently, form a coalition against Banker.  Actually, the idea goes back to Foster (1964b).  The independence is a requirement of rules not previously stated, which do not permit collaboration between Players~1 and 2.  But, as we will show, this solution does not necessarily exist, in the sense that the lower and upper values of the game may differ.  Nevertheless, the lower value and the Players' maximin strategy are of relevance to the Players.  A third solution, which we call the \textit{correlated cooperative equilibrium}, always exists.  Its value and Banker's minimax strategy are of relevance to Banker.  Here there is no independence constraint.

Downton and Lockwood (1976) assumed the full model (in which Player~2 has $2^{12}$ pure strategies).  They evaluated the Players' behavioral strategies in the independent cooperative equilibrium at $\theta=0.1,0.2,\ldots,0.9$, reporting Banker's behavioral strategy at $\theta=0.3,0.5,0.7$.  They rounded the Players' mixing probabilities to two decimal places, and rounded Banker's mixing probabilities to 0 or 1.  These results are, with minor exceptions, correct.  They also derived the Nash equilibrium at the same level of detail, but these results are incorrect because their algorithm for finding the Nash equilibrium is flawed, as we explain later.

In this paper we focus our attention on the simplified model (in which Player~2 has two pure strategies).  In particular, by symmetry we may assume, without loss of generality, that $\theta\in(0,1/2]$.  In effect, we interpret Player~1 as the Player on whose hand the smaller amount is bet.  We have found, for every $\theta\in(0,1/2]$, the correlated cooperative equilibrium, which is typically unique in behavioral strategies, and the Nash equilibrium, which is often nonunique in behavioral strategies.  As for the independent cooperative equilibrium, we have found the maximin strategy of the Players, which is again typically unique in behavioral strategies; the minimax strategy of Banker coincides with the corresponding strategy in the correlated cooperative equilibrium.  As Downton and Lockwood (1976) put it, the latter Banker strategy ``provides a safety-first strategy which guarantees a return to the bank, whatever strategy the players actually adopt.''  It is for this reason that we regard the correlated cooperative equilibrium as more useful, from Banker's perspective, than the Nash equilibrium.  On the other hand, from the Players' perspective, it is arguable whether the independent cooperative equilibrium is more useful than the Nash equilibrium.  (The Players' strategy in the correlated cooperative equilibrium typically requires collaboration and is therefore illegal.)

To mention a few of our findings, the two cooperative equilibria coincide when $\theta\in(9588/37663, 55716/128711)\approx(0.254573, 0.432877)$, and the Nash equilibrium is nearly the same.  Elsewhere, with two exceptions, they differ.  The correlated cooperative equilibrium is piecewise continuous in $\theta$ but with 109 discontinuities in $(0,1/2)$.  The Nash equilibrium is piecewise continuous in $\theta$ but with 102 discontinuities in $(0,1/2)$.  In both cases, the discontinuities come from  Banker's strategy.  The Players' strategies are continuous in $\theta$ in the Nash equilibrium except for one point of discontinuity.  In the independent cooperative equilibrium, there are as many as 13 points of discontinuity in the Players' strategies.  The game's lower value (to the Players) is continuous on $(0,1/2]$ and increasing on $[0,0.496000]$, approximately, and it is maximized at about $0.496000$.  The maximum value is about $-0.008679984$.  The game's upper value (to the Players) is continuous on $(0,1/2]$ and increasing on $(0,0.496088]$, approximately, and it is maximized at about $0.496088$.  The maximum value is about $-0.008677388$.

The correlated cooperative equilibrium, the independent cooperative equilibrium, and the Nash equilibrium are not easy to describe precisely.  Complete, albeit necessarily lengthy, descriptions are provided in Appendixes A, B, and C. 

There have been a number of attempts to quantify Banker's advantage at \textit{baccara banque} in the case of equal amounts bet on Players~1 and 2 (i.e., $\theta=1/2$).  Le Myre (1935, p.~166) and Boll (1944, pp.~43, 70) made the first estimates (1.11\% and 0.87\%, resp.), assuming that the Players independently draw on 5 with probability 1/2, and Banker makes a best response.  The same assumption was made by Barnhart (1980, p.~81), who obtained 0.84\%.  None of these authors was familiar with game theory (or with computers).  Foster (1964b) and Downton and Lockwood (1976) gave the first game-theoretic estimates (0.87\% and 0.85\%, resp.), which are in fact accurate to two significant digits under their respective assumptions.  Kendall and Murchland (1964) gave a simulated estimate (0.819\%), which is inaccurate owing to small sample size.  Judah and Ziemba (1983) determined Banker's best response when the Players always draw on 5, and obtained 0.81685\% (the correct figure is about 0.922104\%).  Under the simplified model that we are assuming and with $\theta=1/2$, Banker's advantage is about 0.8677394\%, as we will see below.

We conclude this introduction with a historical note.  \textit{Baccara banque} was made famous by the Prince of Wales (later Edward VII) in the Royal Baccarat Scandal of 1891 (Shore 1932).  It became the game of choice for wealthy gamblers in 1922 when Nicolas Zographos, a founding member of the Greek Syndicate, announced ``Tout va'' or unlimited stakes.  As he put it (Graves 1963, pp.~27--28), 
\begin{quote}
My idea is so sensational that practically nobody will play chemin-de-fer.  If I guarantee to take any stake of any size, all the millionaires will want to take part in this fantastic party.  The biggest gamblers in the world will come to ruin me.  I suggest we start at Deauville.
\end{quote} 
In recent years, both \textit{baccara banque} and \textit{baccara chemin de fer} have been largely superseded by a nonstrategic form of the game.  Nevertheless, \textit{baccara banque} is still offered at the Salons Priv\'es of the Casino de Monte-Carlo, Thurs.--Sun.\ from 5 p.m.

\section{Evaluation of the payoffs}\label{payoff}

We consider the simplified game, a $2\times2\times2^{1144}$ trimatrix game parameterized by $\theta\in(0,1/2]$.  Here $\theta$ can be interpreted as the proportion of the total amount bet that is bet on Player~1.  Both Players and Banker are assumed to know $\theta$.  The distribution of the total of a two-card hand is 
$$
q(i):=\frac{16+9\delta_{i,0}}{(13)^2},\qquad i=0,1,\ldots,9,
$$
where $\delta_{i,j}$ is the Kronecker delta, and the distribution of the value of a card is 
$$
q'(k):=\frac{1+3\delta_{k,0}}{13},\qquad k=0,1,\ldots,9.
$$
Let $M:\{0,1,\ldots\}\mapsto\{0,1,\ldots,9\}$ be the function $M(i):=\text{Mod}(i,10)$, the remainder when $i$ is divided by 10.  We denote Player~1's pure strategies by $0$ (stand on 5) and $1$ (draw on 5), and similarly for Player~2's pure strategies.  Banker's pure strategies are identified with subsets $T\subset[\{0,1,\ldots,11\}\times\{0,1,\ldots,11\}-\{(11,11)\}]\times\{0,1,\ldots,7\}$, with $T$ indicating the set of triples $(k_1,k_2,j)$ on which Banker draws.  Here $k_1$ is Player 1's third-card value, $k_2$ is Player 2's third-card value, and $j$ is Banker's two-card total.  We let $T^c$ denote the complement of $T$ with respect to this product set containing $(12\times12-1)\times 8=1144$ triples.  Here Player third-card values 10 and 11 are code for ``stand'' and ``natural'', respectively.

We define the $2\times2\times2^{1144}$ three-dimensional array $\bm a(\theta)$ to have $(u_1,u_2,T)$ entry, for $u_1\in\{0,1\}$, $u_2\in\{0,1\}$, and $T\subset[\{0,1,\ldots,11\}\times\{0,1,\ldots,11\}-\{(11,11)\}]\times\{0,1,\ldots,7\}$, equal to
\begin{eqnarray}
&&\!\!\!\!\!\!\!\!a_{u_1,u_2,T}(\theta)\nonumber\\
&&\!\!\!\!\!{}=\bigg(\sum_{i_1=0}^9\sum_{i_2=0}^9\sum_{j=8}^9+\sum_{i_1=8}^9\sum_{i_2=8}^9\sum_{j=0}^7\bigg) q(i_1)q(i_2)q(j)\nonumber\\
&&\qquad{}[\theta\,\text{sgn}(i_1-j)+(1-\theta)\,\text{sgn}(i_2-j)]\\
&&{}+\sum_{i_1=8}^9\sum_{i_2=0}^{4+u_2}\sum_{j=0}^7\sum_{k_2=0}^9\sum_{l=0}^9 1_T((11,k_2,j))q(i_1)q(i_2)q(j)q'(k_2)q'(l)\nonumber\\
&&\qquad{}[\theta+(1-\theta)\,\text{sgn}(M(i_2+k_2)-M(j+l))]\\
&&{}+\sum_{i_1=8}^9\sum_{i_2=0}^{4+u_2}\sum_{j=0}^7\sum_{k_2=0}^9 1_{T^c}((11,k_2,j))q(i_1)q(i_2)q(j)q'(k_2)\nonumber\\
&&\qquad{}[\theta+(1-\theta)\,\text{sgn}(M(i_2+k_2)-j)]\\
&&{}+\sum_{i_1=8}^9\sum_{i_2=5+u_2}^7\sum_{j=0}^7\sum_{l=0}^9 1_T((11,10,j))q(i_1)q(i_2)q(j)q'(l)\nonumber\\
&&\qquad{}[\theta+(1-\theta)\,\text{sgn}(i_2-M(j+l))]\\
&&{}+\sum_{i_1=8}^9\sum_{i_2=5+u_2}^7\sum_{j=0}^7 1_{T^c}((11,10,j))q(i_1)q(i_2)q(j)\nonumber\\
&&\qquad{}[\theta+(1-\theta)\,\text{sgn}(i_2-j)]\\
&&{}+\sum_{i_1=0}^{4+u_1}\sum_{i_2=8}^9\sum_{j=0}^7\sum_{k_1=0}^9\sum_{l=0}^9 1_T((k_1,11,j))q(i_1)q(i_2)q(j)q'(k_1)q'(l)\nonumber\\
&&\qquad{}[\theta\,\text{sgn}(M(i_1+k_1)-M(j+l))+(1-\theta)]\\
&&{}+\sum_{i_1=0}^{4+u_1}\sum_{i_2=8}^9\sum_{j=0}^7\sum_{k_1=0}^9 1_{T^c}((k_1,11,j))q(i_1)q(i_2)q(j)q'(k_1)\nonumber\\
&&\qquad{}[\theta\,\text{sgn}(M(i_1+k_1)-j)+(1-\theta)]\\
&&{}+\sum_{i_1=5+u_1}^7\sum_{i_2=8}^9\sum_{j=0}^7\sum_{l=0}^9 1_T((10,11,j))q(i_1)q(i_2)q(j)q'(l)\nonumber\\
&&\qquad{}[\theta\,\text{sgn}(i_1-M(j+l))+(1-\theta)]\\
&&{}+\sum_{i_1=5+u_1}^7\sum_{i_2=8}^9\sum_{j=0}^7 1_{T^c}((10,11,j))q(i_1)q(i_2)q(j)\nonumber\\
&&\qquad{}[\theta\,\text{sgn}(i_1-j)+(1-\theta)]\\
&&{}+\sum_{i_1=0}^{4+u_1}\sum_{i_2=0}^{4+u_2}\sum_{j=0}^7\sum_{k_1=0}^9\sum_{k_2=0}^9\sum_{l=0}^9 1_T((k_1,k_2,j))q(i_1)q(i_2)q(j)q'(k_1)q'(k_2)q'(l)\nonumber\\
&&\qquad{}[\theta\,\text{sgn}(M(i_1+k_1)-M(j+l))+(1-\theta)\,\text{sgn}(M(i_2+k_2)-M(j+l))]\nonumber\\
&&\\
&&{}+\sum_{i_1=0}^{4+u_1}\sum_{i_2=0}^{4+u_2}\sum_{j=0}^7\sum_{k_1=0}^9\sum_{k_2=0}^9 1_{T^c}((k_1,k_2,j))q(i_1)q(i_2)q(j)q'(k_1)q'(k_2)\nonumber\\
&&\qquad{}[\theta\,\text{sgn}(M(i_1+k_1)-j)+(1-\theta)\,\text{sgn}(M(i_2+k_2)-j)]\\
&&{}+\sum_{i_1=0}^{4+u_1}\sum_{i_2=5+u_2}^7\sum_{j=0}^7\sum_{k_1=0}^9\sum_{l=0}^9 1_T((k_1,10,j))q(i_1)q(i_2)q(j)q'(k_1)q'(l)\nonumber\\
&&\qquad{}[\theta\,\text{sgn}(M(i_1+k_1)-M(j+l))+(1-\theta)\,\text{sgn}(i_2-M(j+l))]\\
&&{}+\sum_{i_1=0}^{4+u_1}\sum_{i_2=5+u_2}^7\sum_{j=0}^7\sum_{k_1=0}^9 1_{T^c}((k_1,10,j))q(i_1)q(i_2)q(j)q'(k_1)\nonumber\\
&&\qquad{}[\theta\,\text{sgn}(M(i_1+k_1)-j)+(1-\theta)\,\text{sgn}(i_2-j)]\\
&&{}+\sum_{i_1=5+u_1}^7\sum_{i_2=0}^{4+u_2}\sum_{j=0}^7\sum_{k_2=0}^9\sum_{l=0}^9 1_T((10,k_2,j))q(i_1)q(i_2)q(j)q'(k_2)q'(l)\nonumber\\
&&\qquad{}[\theta\,\text{sgn}(i_1-M(j+l))+(1-\theta)\,\text{sgn}(M(i_2+k_2)-M(j+l))]\\
&&{}+\sum_{i_1=5+u_1}^7\sum_{i_2=0}^{4+u_2}\sum_{j=0}^7\sum_{k_2=0}^9 1_{T^c}((10,k_2,j))q(i_1)q(i_2)q(j)q'(k_2)\nonumber\\
&&\qquad{}[\theta\,\text{sgn}(i_1-j)+(1-\theta)\,\text{sgn}(M(i_2+k_2)-j)]\\
&&{}+\sum_{i_1=5+u_1}^7\sum_{i_2=5+u_2}^7\sum_{j=0}^7\sum_{l=0}^91_T((10,10,j))q(i_1)q(i_2)q(j)q'(l)\nonumber\\
&&\qquad{}[\theta\,\text{sgn}(i_1-M(j+l))+(1-\theta)\,\text{sgn}(i_2-M(j+l))]\\
&&{}+\sum_{i_1=5+u_1}^7\sum_{i_2=5+u_2}^7\sum_{j=0}^7 1_{T^c}((10,10,j))q(i_1)q(i_2)q(j)\nonumber\\
&&\qquad{}[\theta\,\text{sgn}(i_1-j)+(1-\theta)\,\text{sgn}(i_2-j)].
\end{eqnarray}
Term 1 corresponds to the case in which Banker and/or both Players have naturals.  Terms 2--5 (resp., 6--9) correspond to the case in which only Player~1 (resp., only Player~2) has a natural.  Terms 10--17 correspond to the case in which there are no naturals.

Notice that, in the three-person zero-sum game, $\theta a_{u_1,u_2,T}(1)$ is the payoff to Player~1, $(1-\theta)a_{u_1,u_2,T}(0)$ is the payoff to Player~2, and $$-[\theta a_{u_1,u_2,T}(1)+(1-\theta)a_{u_1,u_2,T}(0)]=-a_{u_1,u_2,T}(\theta)$$ is the payoff to Banker, all measured in units of total amount bet.

\section{Correlated cooperative equilibrium}\label{corr}

We first find the correlated cooperative equilibrium, that is, the solution of the two-person zero-sum game in which the two Players form a coalition against Banker and are not constrained to act independently.  This is a $2^2\times2^{1144}$ matrix game with payoff matrix having entries $a_{u_1,u_2,T}(\theta)$ as defined in Section \ref{payoff}.  For fixed $\theta\in(0,1/2]$, we can obtain a solution as follows:  Given an arbitrary mixture $\bm p=(p_{00},p_{01},p_{10},p_{11})$ of the four pure strategies of the Players, minimize
$$
p_{00}\,a_{0,0,T}(\theta)+p_{01}\,a_{0,1,T}(\theta)+p_{10}\,a_{1,0,T}(\theta)+p_{11}\,a_{1,1,T}(\theta)
$$
as a function of $T$ (this is Banker's best response $T=T_\theta(\bm p)$), and then maximize
$$
E_\theta(\bm p):=p_{00}\,a_{0,0,T_\theta(\bm p)}(\theta)+p_{01}\,a_{0,1,T_\theta(\bm p)}(\theta)+p_{10}\,a_{1,0,T_\theta(\bm p)}(\theta)+p_{11}\,a_{1,1,T_\theta(\bm p)}(\theta)
$$
as a function of $\bm p$.  The maximizing $\bm p$ is the Players' maximin strategy, and the maximal value of $E_\theta(\bm p)$ is the value of the game, assuming $\theta$ is fixed. 

For a given Banker information set $(k_1,k_2,j)$ ($k_1=$ Player~1's third-card value, $k_2=$ Player~2's third-card value, $j=$ Banker's two-card total), Banker's optimal move (draw or stand) may or may not depend on the Players' mixed strategy $\bm p$.  In fact, for only $m$ of the 1144 information sets, where $52\le m\le 69$, is there dependence on $\bm p$.  Writing $p_{11}=1-p_{00}-p_{01}-p_{10}$, it follows that
\begin{eqnarray*}
E_\theta(\bm p)&=&a_0+b_0 p_{00}+c_0 p_{01}+d_0 p_{10}\\
&&\;{}+\sum_{i=1}^m\min(a_i+b_i p_{00}+c_i p_{01}+d_i p_{10},\,a_i'+b_i' p_{00}+c_i' p_{01}+d_i' p_{10}),
\end{eqnarray*}
where the constants $a_i,b_i,c_i,d_i,a_i',b_i',c_i',d_i'$ are computable rational numbers (if $\theta$ is rational) depending on $\theta$.  

For fixed $\theta$, the function $E_\theta(\bm p)$ is concave in $\bm p$ and its maximum occurs at an intersection of three of the $m+4$ planes
\begin{equation}\label{plane}
a_i+b_i p_{00}+c_i p_{01}+d_i p_{10}=a_i'+b_i' p_{00}+c_i' p_{01}+d_i' p_{10}, \quad 1\le i\le m,
\end{equation}
and $p_{00}=0$, $p_{01}=0$, $p_{10}=0$, and $1-p_{00}-p_{01}-p_{10}=0$.  The $m$ planes in \eqref{plane} might be called ``indifference planes''.  This leads to a simple algorithm to find the optimal $\bm p$.  For each of the $\binom{m+4}{3}$ potential points $\bm p$ just mentioned, check whether $p_{00}\ge0$, $p_{01}\ge0$, $p_{10}\ge0$, and $1-p_{00}-p_{01}-p_{10}\ge0$, and if so, evaluate $E_\theta(\bm p)$.  Then determine at which such $\bm p$ the value $E_\theta(\bm p)$ is largest and if it is uniquely so.  

In the case $\theta=1/2$, the number of summands is $m=68$ and there is a unique maximum at
$$
\bm p=(p_{00},p_{01},p_{10},p_{11})=\bigg(0,\frac{110}{543},\frac{110}{543},\frac{323}{543}\bigg);
$$
in addition, $E_{1/2}(\bm p)=-16655514960/[181(13)^9]\approx-0.008677394$ there.  The point $\bm p$ is the intersection of three planes, the two indifference planes for $(6,10,6)$ (Player~1's third card is 6, Player~2 stands, Banker's two-card total is 6) and $(10,6,6)$ and the plane $p_{00}=0$.  Banker's best response is displayed in Table \ref{best, theta=1/2}.  

\begin{table}[htb]
\caption{\label{best, theta=1/2}Banker's strategy in the correlated cooperative equilibrium when $\theta=1/2$. Specifically the table displays Banker's maximum drawing total as a function of Player~1's and Player~2's third-card values.  For example, if Player~1's third card is 7 and Player~2's third card is 8, the entry 3 signifies that Banker draws on 0--3 and stands on 4--7.  5+ signifies that Banker draws on 0--5, mixes on 6, and stands on 7.  The table is symmetric in Player~1 and Player~2.  Similar entries are shaded similarly for readability.}
\tabcolsep=1.8mm
\catcode`@=\active\def@{\phantom{+}}
\begin{center}
\begin{tabular}{ccccccccccccc}
\hline
\noalign{\smallskip}
Player~1's &\multicolumn{12}{c}{Player~2's third-card value (10 = stand, 11 = natural)}\\
third-card &\\
value & 0@ & 1@ & 2@ & 3@ & 4@ & 5@ & 6@ & 7@ & 8@ & 9@ & \!\!10 & \!\!11 \\
\noalign{\smallskip} \hline
\noalign{\smallskip}
0 & \cellcolor[gray]{0.9}3@ & \cellcolor[gray]{0.9}3@ & \cellcolor[gray]{0.9}3@ & \cellcolor[gray]{0.8}4@ & \cellcolor[gray]{0.8}4@ & \cellcolor[gray]{0.8}4@ & \cellcolor[gray]{0.8}4@ & \cellcolor[gray]{0.9}3@ & \cellcolor[gray]{0.9}3@ & \cellcolor[gray]{0.9}3@ & \cellcolor[gray]{0.7}5@ & \cellcolor[gray]{0.9}3@ \\
1 & \cellcolor[gray]{0.9}3@ & \cellcolor[gray]{0.9}3@ & \cellcolor[gray]{0.8}4@ & \cellcolor[gray]{0.8}4@ & \cellcolor[gray]{0.8}4@ & \cellcolor[gray]{0.8}4@ & \cellcolor[gray]{0.8}4@ & \cellcolor[gray]{0.8}4@ & \cellcolor[gray]{0.9}3@ & \cellcolor[gray]{0.9}3@ & \cellcolor[gray]{0.7}5@ & \cellcolor[gray]{0.9}3@ \\
2 & \cellcolor[gray]{0.9}3@ & \cellcolor[gray]{0.8}4@ & \cellcolor[gray]{0.8}4@ & \cellcolor[gray]{0.8}4@ & \cellcolor[gray]{0.8}4@ & \cellcolor[gray]{0.8}4@ & \cellcolor[gray]{0.7}5@ & \cellcolor[gray]{0.8}4@ & \cellcolor[gray]{0.9}3@ & \cellcolor[gray]{0.9}3@ & \cellcolor[gray]{0.7}5@ & \cellcolor[gray]{0.8}4@ \\
3 & \cellcolor[gray]{0.8}4@ & \cellcolor[gray]{0.8}4@ & \cellcolor[gray]{0.8}4@ & \cellcolor[gray]{0.8}4@ & \cellcolor[gray]{0.8}4@ & \cellcolor[gray]{0.7}5@ & \cellcolor[gray]{0.7}5@ & \cellcolor[gray]{0.8}4@ & \cellcolor[gray]{0.8}4@ & \cellcolor[gray]{0.9}3@ & \cellcolor[gray]{0.7}5@ & \cellcolor[gray]{0.8}4@ \\
4 & \cellcolor[gray]{0.8}4@ & \cellcolor[gray]{0.8}4@ & \cellcolor[gray]{0.8}4@ & \cellcolor[gray]{0.8}4@ & \cellcolor[gray]{0.7}5@ & \cellcolor[gray]{0.7}5@ & \cellcolor[gray]{0.7}5@ & \cellcolor[gray]{0.7}5@ & \cellcolor[gray]{0.8}4@ & \cellcolor[gray]{0.8}4@ & \cellcolor[gray]{0.7}5@ & \cellcolor[gray]{0.7}5@ \\
5 & \cellcolor[gray]{0.8}4@ & \cellcolor[gray]{0.8}4@ & \cellcolor[gray]{0.8}4@ & \cellcolor[gray]{0.7}5@ & \cellcolor[gray]{0.7}5@ & \cellcolor[gray]{0.7}5@ & \cellcolor[gray]{0.7}5@ & \cellcolor[gray]{0.7}5@ & \cellcolor[gray]{0.7}5@ & \cellcolor[gray]{0.8}4@ & \cellcolor[gray]{0.7}5@ & \cellcolor[gray]{0.7}5@ \\
6 & \cellcolor[gray]{0.8}4@ & \cellcolor[gray]{0.8}4@ & \cellcolor[gray]{0.7}5@ & \cellcolor[gray]{0.7}5@ & \cellcolor[gray]{0.7}5@ & \cellcolor[gray]{0.7}5@ & \cellcolor[gray]{0.6}6@ & \cellcolor[gray]{0.6}6@ & \cellcolor[gray]{0.7}5@ & \cellcolor[gray]{0.8}4@ & \cellcolor[gray]{0.7}5+ & \cellcolor[gray]{0.6}6@ \\
7 & \cellcolor[gray]{0.9}3@ & \cellcolor[gray]{0.8}4@ & \cellcolor[gray]{0.8}4@ & \cellcolor[gray]{0.8}4@ & \cellcolor[gray]{0.7}5@ & \cellcolor[gray]{0.7}5@ & \cellcolor[gray]{0.6}6@ & \cellcolor[gray]{0.6}6@ & \cellcolor[gray]{0.9}3@ & \cellcolor[gray]{0.9}3@ & \cellcolor[gray]{0.6}6@ & \cellcolor[gray]{0.6}6@ \\
8 & \cellcolor[gray]{0.9}3@ & \cellcolor[gray]{0.9}3@ & \cellcolor[gray]{0.9}3@ & \cellcolor[gray]{0.8}4@ & \cellcolor[gray]{0.8}4@ & \cellcolor[gray]{0.7}5@ & \cellcolor[gray]{0.7}5@ & \cellcolor[gray]{0.9}3@ & 2@ & \cellcolor[gray]{0.9}3@ & \cellcolor[gray]{0.7}5@ & 2@ \\
9 & \cellcolor[gray]{0.9}3@ & \cellcolor[gray]{0.9}3@ & \cellcolor[gray]{0.9}3@ & \cellcolor[gray]{0.9}3@ & \cellcolor[gray]{0.8}4@ & \cellcolor[gray]{0.8}4@ & \cellcolor[gray]{0.8}4@ & \cellcolor[gray]{0.9}3@ & \cellcolor[gray]{0.9}3@ & \cellcolor[gray]{0.9}3@ & \cellcolor[gray]{0.7}5@ & \cellcolor[gray]{0.9}3@ \\
10 &\cellcolor[gray]{0.7}5@ & \cellcolor[gray]{0.7}5@ & \cellcolor[gray]{0.7}5@ & \cellcolor[gray]{0.7}5@ & \cellcolor[gray]{0.7}5@ & \cellcolor[gray]{0.7}5@ & \cellcolor[gray]{0.7}5+ & \cellcolor[gray]{0.6}6@ & \cellcolor[gray]{0.7}5@ & \cellcolor[gray]{0.7}5@ & \cellcolor[gray]{0.7}5@ & \cellcolor[gray]{0.7}5@ \\
11 &\cellcolor[gray]{0.9}3@ & \cellcolor[gray]{0.9}3@ & \cellcolor[gray]{0.8}4@ & \cellcolor[gray]{0.8}4@ & \cellcolor[gray]{0.7}5@ & \cellcolor[gray]{0.7}5@ & \cellcolor[gray]{0.6}6@ & \cellcolor[gray]{0.6}6@ & 2@ & \cellcolor[gray]{0.9}3@ & \cellcolor[gray]{0.7}5@ &   \\
\noalign{\smallskip}
\hline
\end{tabular}
\end{center}
\end{table}

There are two information sets, $(6,10,6)$  and $(10,6,6)$, at which Banker is indifferent.  This leads to a $4\times4$ matrix game with payoff matrix 
\begin{equation}\label{4x4}
\arraycolsep=2mm
\bm A:=-\frac{8}{(13)^9}\begin{pmatrix}
11815316 & 11681780 & 11681780 & 11548244\\ 
11621229 & 11427789 & 11680301 & 11486861\\
11621229 & 11680301 & 11427789 & 11486861\\
11421510 & 11467270 & 11467270 & 11513030\end{pmatrix}.
\end{equation}
Rows correspond to pure strategies of the Players, SS, SD, DS, and DD on 5 by Player~1 and Player~2.  Columns correspond to pure strategies of Banker, which follow Table \ref{best, theta=1/2} except for SS, SD, DS, DD on $(6,10,6)$ and $(10,6,6)$.  This game has two extreme equilibria $(\bm p, \bm q)$, where $\bm p$ is as above, and
$$
\bm q=(q_{00},q_{01},q_{10},q_{11})=\bigg(0,\frac{671}{5792},\frac{671}{5792},\frac{2225}{2896}\bigg)
$$
or
$$
\bm q=(q_{00},q_{01},q_{10},q_{11})=\bigg(\frac{671}{5792},0,0,\frac{5121}{5792}\bigg).
$$
The corresponding Banker behavioral strategies are the same for both equilibria,
\begin{eqnarray*}
P(\text{Banker draws on }(6,10,6))&=&q_{10}+q_{11}=5121/5792,\\
P(\text{Banker draws on }(10,6,6))&=&q_{01}+q_{11}=5121/5792.
\end{eqnarray*}
This completes the derivation in the case $\theta=1/2$.

Next, we extend this solution to the largest $\theta$-interval in $(0,1/2]$ to which Table \ref{best, theta=1/2} applies.  The maximum of $E_{1/2}(\bm p)$ found above occurred at the intersection of three planes, the two indifference planes for $(6,10,6)$ and $(10,6,6)$ and the plane $p_{00}=0$.  The intersection of the three corresponding $\theta$-dependent planes occurs at
\begin{eqnarray*}
\bm p(\theta):&=&\bigg(0,\frac{10\,(-3171 - 17332\,\theta + 20640\,\theta^2)}{3\,[901 - 443072\,\theta(1-\theta)]}, \frac{10\,(137 - 23948\,\theta + 20640\,\theta^2)}{3\,[901 - 443072\,\theta(1-\theta)]},\nonumber\\
 &&\qquad\qquad\qquad\qquad\qquad\quad\frac{33043 - 916416\,\theta(1-\theta)}{3\,[901 - 443072\,\theta(1-\theta)]}\bigg).
\end{eqnarray*}
With this choice of $\bm p(\theta)$ we can ask, what is the smallest $\theta$ for which Banker's best response is given by Table \ref{best, theta=1/2}?  Checking each of the 1144 Banker information sets, we find that the first change occurs at $(2,5,5)$.  The contribution to the difference between the Players' expectation when Banker draws and the Players' expectation when Banker stands (due to $(2,5,5)$) vanishes at $\theta_*\approx0.496088$; more precisely, $\theta_*$ is a root of the cubic polynomial $6896169 - 1190915420\,\theta + 3549548480\,\theta^2 - 2372477184\,\theta^3$.  

On the interval $(\theta_*,1/2]$ Banker mixes at $(6,10,6)$ and $(10,6,6)$, and it remains to determine the mixing probabilities.  With $\bm A(\theta)$ denoting the $\theta$-dependent version of \eqref{4x4}, the value $v(\theta)$ of the game satisfies
$$
\bm p(\theta)\bm A(\theta)=(v(\theta),v(\theta),v(\theta),v(\theta)).
$$
We find that
\begin{equation}\label{v(theta)}
v(\theta) = -\frac{80\,[2421541645 - 515181045616\,\theta(1-\theta)]}{(13)^9\,[901 - 443072\,\theta(1-\theta)]}.
\end{equation}
Since the Players have three strategies active, we seek a $3\times3$ kernel, and two of the four possibilities give nonnegative Banker mixing probabilities, the ones corresponding to $q_{00}(\theta)=0$ and to $q_{01}(\theta)=0$.  The resulting two solutions of
$$
\bm A(\theta)\bm q(\theta)^{\T}=(x(\theta),v(\theta),v(\theta),v(\theta))^{\T},
$$
where $x(\theta)\le v(\theta)$, give the same Banker behavioral strategies, namely,
\begin{eqnarray*}
P(\text{Banker draws on }(6,10,6))&=&q_{10}(\theta)+q_{11}(\theta)\\
&=&\frac{21311777 - 393439433\,\theta + 620812136\,\theta^2}{208\,[901 - 443072\,\theta(1-\theta)]},\\
P(\text{Banker draws on }(10,6,6))&=&q_{01}(\theta)+q_{11}(\theta)\\
&=&\frac{248684480 - 848184839\,\theta + 620812136\,\theta^2}{208\,[901 - 443072\,\theta(1-\theta)]}.
\end{eqnarray*}
This completes the derivation for the interval $(\theta_*,1/2]$.  

Repeating this process (from right to left, or from left to right), we find that there are 110 such intervals in $(0,1/2]$.  That is, there exist $0=\theta_0<\theta_1<\theta_2<\cdots<\theta_{109}=\theta_*<\theta_{110}=1/2$ such that the correlated cooperative equilibrium is a rational function of $\theta$ on interval $i$, namely $(\theta_{i-1},\theta_i)$, for $i=1,2,\ldots,110$.  Each $\theta_i$ is a root of a polynomial of degree 4 or less.  At the boundary points, discontinuities occur in Banker's strategy.  

There are two types of intervals, those in which the number of Banker information sets at which Banker mixes is two and those in which it is three.  When it is two, the resulting $4\times4$ game has a $3\times3$ kernel.  When it is three, the resulting $4\times8$ game has a $4\times4$ kernel.  For intervals 1--41, $p_{11}(\theta)=0$;  for intervals 42--46, 102--103, and 107--110, $p_{00}(\theta)=0$; for intervals 61--66, $p_{10}(\theta)=0$; and for all remaining intervals the Players have all strategies active.  In all cases, despite the correlated cooperative equilibrium being nonunique in mixed strategies, it is unique in behavioral strategies.  (This can be proved algebraically.)  However there are exceptions.  At each boundary point, the solutions from both adjacent intervals apply, so there is nonuniqueness of Banker behavioral strategies at the 109 such $\theta$.

The value function is continuous on $(0,1/2]$, increasing on $(0,\theta_*]$ and decreasing on $[\theta_*,1/2]$ (see \eqref{v(theta)}).  Its maximum value is $v(\theta_*)\approx-0.008677388$.  See Figure \ref{graph-values} for a sketch of the graph.

\begin{figure}[htb]
\centering
\includegraphics[width=4.5in]{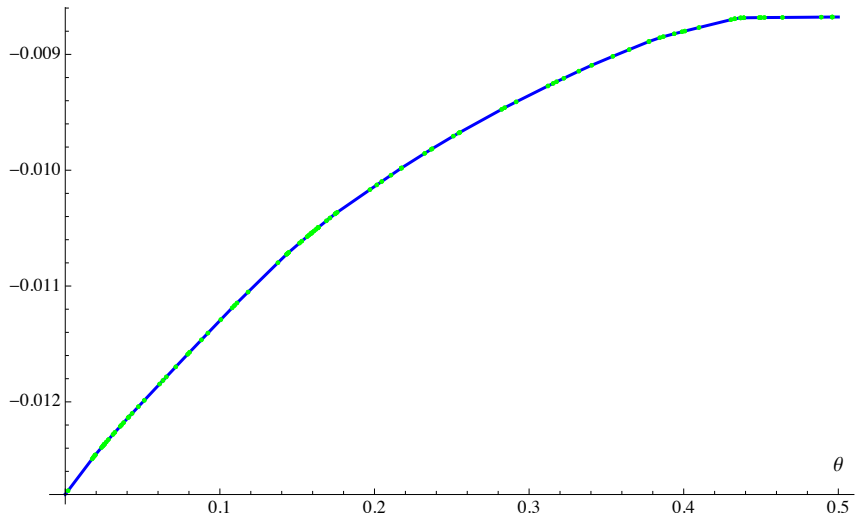}
\caption{\label{graph-values}The graph of the value of the game to the Players (or minus the value to Banker), assuming the correlated cooperative equilibrium.}
\end{figure}

Of particular interest are intervals 75--97, in which the Players' strategy at equilibrium is $p_{00}(\theta)=4/121$, $p_{01}(\theta)=p_{10}(\theta)=18/121$, and $p_{11}(\theta)=81/121$, that is, Players~1 and 2 play independently, drawing on 5 with probabilities $p_1(\theta)=p_2(\theta)=9/11$.  In this case, the correlated and independent cooperative equilibria coincide, as we demonstrate in Proposition \ref{prop} below.

The complete details of the correlated cooperative equilibrium are summarized in Appendix A.

\section{Independent cooperative equilibrium}\label{ice}

Let $X_1$ and $X_2$ be Player~1's and Player~2's two-card totals, and let $X_1'$ and $X_2'$ be their third-card values.  Let $Y$ be Banker's two-card total, and let $Y'$ be Banker's third-card value.  Let $p_1$ be the probability that Player~1 draws on 5, and let $p_2$ be the probability that Player~2 draws on 5.  Let $U_1$ and $U_2$ be the mixed strategies of Players~1 and 2, that is, random variables with distributions $P(U_1=1)=p_1=1-P(U_1=0)$ and $P(U_2=1)=p_2=1-P(U_2=0)$.  Assume they are independent of $X_1,X_2,Y,X_1',X_2',Y'$ but not necessarily of each other.

Then Equation~(2.1b) of Downton and Lockwood (1976), which represents the conditional expected gain to Banker when he draws (measured in units of total amount bet), given that Player~1's third-card value is $k_1\in\{0,1,\ldots,9\}$, Player~2's third-card value is $k_2\in\{0,1,\ldots,9\}$, and Banker's two-card total is $j\in\{0,1,\ldots,7\}$, can be written 
\begin{eqnarray*}
\!\!\!&&a(k_1,k_2,j;p_1,p_2)\\
\!\!\!&&{}=\theta E[{\rm sgn}(M(Y+Y')-M(X_1+X_1'))\mid X_1\le4+U_1,\, X_1'=k_1,\,Y=j]\nonumber\\
\!\!\!&&\;\;{}+(1-\theta)E[{\rm sgn}(M(Y+Y')-M(X_2+X_2'))\mid X_2\le4+U_2,\, X_2'=k_2,\,Y=j].\nonumber
\end{eqnarray*}
Generally, one does not add two conditional expectations when they are conditioned on different events.  However, if $U_1$ and $U_2$ are independent, then 
\begin{eqnarray*}
&&\!\!\!\!\!a(k_1,k_2,j;p_1,p_2)\\
&&{}=\theta E[{\rm sgn}(M(Y+Y')-M(X_1+X_1'))\mid X_1\le4+U_1,\, X_1'=k_1,\,Y=j,\\
&&\qquad\qquad\qquad\qquad\qquad\qquad\qquad\qquad\qquad{} X_2\le4+U_2,\, X_2'=k_2]\\
&&\quad{}+(1-\theta)E[{\rm sgn}(M(Y+Y')-M(X_2+X_2'))\mid X_2\le4+U_2,\, X_2'=k_2,\\
&&\qquad\qquad\qquad\qquad\qquad\qquad\qquad\qquad\qquad{} Y=j,\,X_1\le4+U_1,\, X_1'=k_1]\\
&&{}=E[\theta\,{\rm sgn}(M(Y+Y')-M(X_1+X_1'))\\
&&\qquad{}+(1-\theta){\rm sgn}(M(Y+Y')-M(X_2+X_2'))\mid\\
&&\qquad\qquad{}X_1\le4+U_1,\, X_2\le4+U_2,\, X_1'=k_1,\, X_2'=k_2,\, Y=j],
\end{eqnarray*}
which is evidently what was intended.  Here we have used the simple fact that
$$
E[X\mid A]=E[X\mid A\cap B]\quad \text{if }1_B\text{ is independent of }1_A\text{ and }X.
$$
The point is that Downton and Lockwood (1976) effectively assumed that Player~1 and Player~2 act independently, even though they make no such assumption explicitly.  Thus, their ``co-operative optimum strategy'' is what we call the independent cooperative equilibrium.

Let us find the independent cooperative equilibrium, that is, the solution of the two-person zero-sum game in which the two Players form a coalition against Banker but are constrained to act independently.  This is a $2^2\times2^{1144}$ matrix game with payoff matrix having entries $a_{u_1,u_2,T}(\theta)$ as defined in Section \ref{payoff}, but mixtures of the four pure strategies of the Players must have the form $((1-p_1)(1-p_2),(1-p_1)p_2,p_1(1-p_2),p_1p_2)$ for some $p_1,p_2\in[0,1]$.  The following proposition shows that the solution need not exist, in the sense that the lower and upper values of the game may differ.  (For a closely related result, see Maschler, Solan, and Zamir 2013, p.~179.)  Let $\Delta_n:=\{\bm p=(p_1,\ldots,p_n)\in[0,1]^n:p_1+\cdots+p_n=1\}$.

\begin{proposition}\label{prop}
Let $\bm A$ be the payoff matrix for a $4\times n$ matrix game, with the additional constraint that the row player is required to use a mixed strategy of the form 
$$
\bm p=((1-p_1)(1-p_2),(1-p_1)p_2,p_1(1-p_2),p_1p_2)
$$
for some $p_1,p_2\in[0,1]$.  Let us describe such elements of $\Delta_4$ as belonging to $\Delta_2\times\Delta_2$.  Then the lower value of the game is 
\begin{equation}\label{lower}
\underline{v}=\max_{\bm p\in\Delta_2\times\Delta_2}\min_{\bm q\in\Delta_n}\bm p\bm A\bm q^{\emph{\T}}=\max_{\bm p\in\Delta_2\times\Delta_2}\min_{1\le j\le n}(\bm p\bm A)_j,
\end{equation}
while the upper value of the game is
\begin{equation}\label{upper}
\overline{v}=\max_{\bm p\in\Delta_4}\min_{\bm q\in\Delta_n}\bm p\bm A\bm q^{\emph{\T}}=\max_{\bm p\in\Delta_4}\min_{1\le j\le n}(\bm p\bm A)_j,
\end{equation}
which is equal to the value of the unconstrained game.  In particular, $\underline{v}=\overline{v}$ if and only if the maximum in \eqref{upper} occurs at a point in $\Delta_2\times\Delta_2$.
\end{proposition}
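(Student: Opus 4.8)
The plan is to unwind the definitions of the two values and reduce everything to one elementary observation about multiaffine functions. By definition the lower value is $\underline v=\max_{\bm p\in\Delta_2\times\Delta_2}\min_{\bm q\in\Delta_n}\bm p\bm A\bm q^{\T}$ (the row player commits first, within the admissible product strategies) and the upper value is $\overline v=\min_{\bm q\in\Delta_n}\max_{\bm p\in\Delta_2\times\Delta_2}\bm p\bm A\bm q^{\T}$ (the column player commits first); all the maxima and minima are attained because $\Delta_n$, $\Delta_4$, and $\Delta_2\times\Delta_2$ are compact and the relevant functions are continuous. First I would record the trivial simplex reduction: for any row vector $\bm c$ one has $\min_{\bm q\in\Delta_n}\bm c\bm q^{\T}=\min_{1\le j\le n}c_j$ and $\max_{\bm q\in\Delta_n}\bm c\bm q^{\T}=\max_{1\le j\le n}c_j$, a linear functional on a simplex being optimized at a vertex. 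Applied with $\bm c=\bm p\bm A$ this yields the second equalities in \eqref{lower} and \eqref{upper} at once, and it lets me work throughout with the concave polyhedral function $\bm p\mapsto\min_{1\le j\le n}(\bm p\bm A)_j$.

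The crux, which I would isolate as a lemma, is that for every fixed $\bm q\in\Delta_n$,
$$
\max_{\bm p\in\Delta_2\times\Delta_2}\bm p\bm A\bm q^{\T}=\max_{\bm p\in\Delta_4}\bm p\bm A\bm q^{\T}.
$$
To prove it, set $\bm b:=\bm A\bm q^{\T}=(b_{00},b_{01},b_{10},b_{11})^{\T}$; then for $\bm p=((1-p_1)(1-p_2),(1-p_1)p_2,p_1(1-p_2),p_1p_2)$ the quantity $\bm p\bm b$ is affine in $p_1$ for each fixed $p_2$ and affine in $p_2$ for each fixed $p_1$, so its maximum over $(p_1,p_2)\in[0,1]^2$ is attained at one of the four corners of the square, where $\bm p$ is one of the four unit vectors of $\Delta_4$. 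Since a linear functional on $\Delta_4$ is likewise maximized at a unit vector, both sides equal $\max\{b_{00},b_{01},b_{10},b_{11}\}$. Taking $\min_{\bm q\in\Delta_n}$ of both sides and then applying von Neumann's minimax theorem to the genuinely unconstrained $4\times n$ game with matrix $\bm A$ gives
$$
\overline v=\min_{\bm q\in\Delta_n}\max_{\bm p\in\Delta_4}\bm p\bm A\bm q^{\T}=\max_{\bm p\in\Delta_4}\min_{\bm q\in\Delta_n}\bm p\bm A\bm q^{\T},
$$
which is \eqref{upper} and is the value of the unconstrained game.

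For the equivalence, the inequality $\underline v\le\overline v$ is the usual max-min $\le$ min-max bound restricted to $\Delta_2\times\Delta_2$: for any $\bm p\in\Delta_2\times\Delta_2$ one has $\min_{\bm q}\bm p\bm A\bm q^{\T}\le\overline v$, and taking the max over $\bm p$ gives the claim. If the maximum in \eqref{upper} is attained at some $\bm p^\star\in\Delta_2\times\Delta_2$, then $\overline v=\min_{\bm q}\bm p^\star\bm A\bm q^{\T}\le\max_{\bm p\in\Delta_2\times\Delta_2}\min_{\bm q}\bm p\bm A\bm q^{\T}=\underline v$, so equality holds. Conversely, if $\underline v=\overline v$, I would pick (using compactness of $\Delta_2\times\Delta_2$) a maximizer $\bm p^\star\in\Delta_2\times\Delta_2$ of the expression defining $\underline v$; then $\min_{\bm q}\bm p^\star\bm A\bm q^{\T}=\underline v=\overline v=\max_{\bm p\in\Delta_4}\min_{\bm q}\bm p\bm A\bm q^{\T}$, so $\bm p^\star$ attains the maximum in \eqref{upper}, which therefore occurs at a point of $\Delta_2\times\Delta_2$.

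I do not expect a serious obstacle: once the multiaffine observation is in place, the rest is bookkeeping. The two points that need care are (i) keeping the orders of optimization straight — the lower value is an outer maximum over the \emph{constrained} row strategies, whereas the minimax theorem may be invoked only for the unconstrained $4\times n$ game — and (ii) invoking compactness of $\Delta_2\times\Delta_2$ in the converse half of the equivalence, so that a maximizer of the lower-value expression genuinely exists inside $\Delta_2\times\Delta_2$.
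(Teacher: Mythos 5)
Your proof is correct and follows essentially the same route as the paper: the heart of the argument in both cases is that the maximum of a linear functional over $\Delta_4$ coincides with its maximum over $\Delta_2\times\Delta_2$, which then lets one identify the constrained upper value with the unconstrained value via the minimax theorem. The only cosmetic difference is in how that key fact is justified — the paper observes directly that $\Delta_2\times\Delta_2$ contains the four extreme points of $\Delta_4$, whereas you derive the same conclusion by noting that $\bm p\bm b$ is bilinear on $[0,1]^2$ and so attains its maximum at a corner of the square; and you spell out the ``In particular'' equivalence in both directions, which the paper leaves as an immediate consequence.
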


\begin{proof}
Equation \eqref{lower} is by definition.
The value of the unconstrained game is, by the minimax theorem,
\begin{equation}\label{upper2}
\max_{\bm p\in\Delta_4}\min_{\bm q\in\Delta_n}\bm p\bm A\bm q^{\T}=\min_{\bm q\in\Delta_n}\max_{\bm p\in\Delta_4}\bm p\bm A\bm q^{\T}=\min_{\bm q\in\Delta_n}\max_{\bm p\in\Delta_2\times\Delta_2}\bm p\bm A\bm q^{\T},
\end{equation}
the right side of which is, by definition, the upper value of the constrained game.  The last equality uses the fact that a linear function has the same maximum over $\Delta_4$ as over $\Delta_2\times\Delta_2$ because the latter contains the extreme points of the former (namely $(0,0,0,1),(0,0,1,0),(0,1,0,0),(1,0,0,0)$).
\end{proof}

\begin{remark}
A mixed strategy $\bm p$ for the row player that achieves the maximum in \eqref{lower} is called a \textit{maximin} strategy, and it assures the row player of an expected gain of at least $\underline{v}$.  A mixed strategy $\bm q$ for the column player that achieves the minimum in the center or on the right side of \eqref{upper2} is called a \textit{minimax} strategy, and it assures the column player of an expected loss of at most $\overline{v}$.
\end{remark}

For fixed $\theta\in(0,1/2]$, we can obtain the Players' maximin strategy and the lower value of the game as follows:  Given an arbitrary probabilities $p_1$ and $p_2$ (of drawing on 5 for Player~1 and Player~2), minimize
$$
(1-p_1)(1-p_2)a_{0,0,T}(\theta)+(1-p_1)p_2\,a_{0,1,T}(\theta)+p_1(1-p_2)a_{1,0,T}(\theta)+p_1p_2\,a_{1,1,T}(\theta)
$$
as a function of $T$ (this is Banker's best response $T=T_\theta(p_1,p_2)$), and then maximize
\begin{eqnarray}\label{E0}
E_\theta^0(p_1,p_2)&:=&(1-p_1)(1-p_2)a_{0,0,T_\theta(p_1,p_2)}(\theta)+(1-p_1)p_2\,a_{0,1,T_\theta(p_1,p_2)}(\theta)\nonumber\\
&&\qquad{}+p_1(1-p_2)a_{1,0,T_\theta(p_1,p_2)}(\theta)+p_1 p_2\,a_{1,1,T_\theta(p_1,p_2)}(\theta)
\end{eqnarray}
as a function of $(p_1,p_2)$.  The maximizing $(p_1,p_2)$ is the Players' maximin strategy, and the maximal value of $E_\theta^0(p_1,p_2)$ is the lower value of the game, assuming $\theta$ is fixed.  (Cf.~\eqref{lower}.)

For a given Banker information set $(k_1,k_2,j)$, Banker's optimal move (draw or stand) may or may not depend on the Players' strategy $(p_1,p_2)$.  In fact, for only $m$ of the 1144 information sets, where $52\le m\le69$, is there dependence on $(p_1,p_2)$.  It follows that
$E_\theta^0(p_1,p_2)$ has the form
\begin{eqnarray*}
E_\theta^0(p_1,p_2)&=&E_\theta((1-p_1)(1-p_2),(1-p_1)p_2,p_1(1-p_2),p_1p_2)\\
&=&a_0+b_0 p_1+c_0 p_2+d_0 p_1p_2\\
&&\;{}+\sum_{i=1}^m\min(a_i+b_i p_1+c_i p_2+d_i p_1p_2,\,a_i'+b_i' p_1+c_i' p_2+d_i' p_1p_2),
\end{eqnarray*}
where the constants $a_i,b_i,c_i,d_i,a_i',b_i',c_i',d_i'$ are computable rational numbers (if $\theta$ is rational) depending on $\theta$---but they are not the same as the ones in Section~\ref{corr}.  

For fixed $\theta$, the function $E_\theta^0(p_1,p_2)$, although it depends on only two variables instead of three, is more complicated than $E_\theta(\bm p)$.  It is not concave, and its maximum does not necessarily occur at an intersection of two of the $m+4$ curves
\begin{equation}\label{curve}
a_i+b_i p_1+c_i p_2+d_i p_1p_2=a_i'+b_i' p_1+c_i' p_2+d_i' p_1p_2, \quad 1\le i\le m,
\end{equation}
and $p_1=0$, $p_1=1$, $p_2=0$, and $p_2=1$.  Its maximum could occur at a point on a single curve but typically occurs at a point of intersection.  The $m$ curves in \eqref{curve} might be called ``indifference curves''.
This leads to an algorithm to find the optimal $(p_1,p_2)$.  For each of the $2\binom{m+4}{2}$ potential points $(p_1,p_2)$ just mentioned, check whether $0\le p_1\le1$ and $0\le p_2\le1$, and if so, evaluate $E_\theta^0(p_1,p_2)$.  Then determine at which such $(p_1,p_2)$ the value $E_\theta^0(p_1,p_2)$ is largest and if it is uniquely so.   Finally, confirm that this gives a global maximum.  (If it does not, look for a global maximum along one of the $m+4$ curves.  The global maximum cannot occur at a point that avoids all of these curves because $1$, $p_1$, $p_2$, and $p_1 p_2$ are harmonic in $(p_1,p_2)$.)

In the case $\theta=1/2$, the number of summands is $m=68$ and among the $2\binom{72}{2}$ potential points of intersection there is a unique maximum at
\begin{equation*}
p_1=p_2=\frac{-319+\sqrt{245569}}{224}\approx0.788166;
\end{equation*}
and $E_{1/2}^0(p_1,p_2) = 5\,(-1933207795 + 260493 \sqrt{245569})/[98(13)^9]\approx-0.00867999$ there.  It can then be confirmed that this determines a global maximum.  Furthermore, Banker's best response to this choice of $(p_1,p_2)$ is exactly as in Table \ref{best, theta=1/2}.  This completes the derivation in the case $\theta=1/2$.

Next, we extend this solution to the largest $\theta$-interval in $(0,1/2]$ for which Table \ref{best, theta=1/2} describes Banker's best response.  The maximum of $E_{1/2}^0(p_1,p_2)$ found above occurred at the intersection of the two indifference curves for $(6,10,6)$ and $(10,6,6)$.  The $\theta$-dependent versions of these two indifference curves intersect at the point $(p_1(\theta),p_2(\theta))$, where
\begin{eqnarray*}\label{p1,p2}
p_1(\theta)&=&\frac{-6191 + 932160\,\theta - 1065024\,\theta^2-s(\theta)}{32\, (151 - 12928\,\theta + 8256\,\theta^2)},\\
p_2(\theta)&=&\frac{139055 - 1197888\,\theta + 1065024\,\theta^2+s(\theta)}{32\, (4521 + 3584\,\theta - 8256\,\theta^2)},
\end{eqnarray*}
and $s(\theta)=\sqrt{[20687 - 1065024\,\theta(1-\theta)] [20687 - 1556544\,\theta(1-\theta)]}$.  Then, with $\bm A(\theta)$ as before, 
\begin{eqnarray*}
&&((1-p_1(\theta))(1-p_2(\theta)),(1-p_1(\theta))p_2(\theta),p_1(\theta)(1-p_2(\theta)),p_1(\theta)p_2(\theta))\bm A(\theta)\\
&&\qquad\qquad\qquad{}=(v(\theta),v(\theta),v(\theta),v(\theta))
\end{eqnarray*}
with
\begin{eqnarray*}
v(\theta)&:=&-[94430296089921 - 6646323952883456\,\theta - 
 25262343281817856\,\theta^2 \\
 &&\quad{}+ 63817334469402624\,\theta^3 - 
 31908667234701312\,\theta^4\\
&&\quad{}-3 (980324411 - 4975425984\,\theta(1-\theta))\,s(\theta)]\\
&&\;{}/[21208998746 (151 - 12928\,\theta + 8256\,\theta^2) (4521 + 3584\,\theta - 8256\,\theta^2)].
\end{eqnarray*}

The first change in the matrix of Table \ref{best, theta=1/2} occurs at the $(9,3)$ entry, which changes from 3 to 4 as $\theta$ goes from $\theta>\theta_*$ to $\theta<\theta_*$, where $\theta_*\approx0.4958752$ ($\theta_*$ is a root of a quartic polynomial).  However, for $\theta$ near $\theta_*$, we find that the intersection of the two indifference curves for $(6,10,6)$ and for $(10,6,6)$ does not determine a global maximum (or even a local maximum), and in fact the global maximum occurs along the indifference curve for $(10,6,6)$.  This leads to a different expression for the value, call it  $v_*(\theta)$, and the $\theta$ at which $v(\theta)=v_*(\theta)$, call it $\theta_{**}$, is the actual left endpoint of the first interval, $(\theta_{**},1/2]$.  We find that $\theta_{**}\approx0.496212$ ($\theta_{**}$ is a root of a polynomial of degree 8; see Appendix B).  This completes the derivation for the interval $(\theta_{**},1/2]$.

Repeating this process, we find that there are 131 such intervals in $(0,1/2]$.  That is, there exist $0=\theta_0<\theta_1<\theta_2<\cdots<\theta_{130}=\theta_{**}<\theta_{131}=1/2$ such that the independent cooperative equilibrium (described by the Players' maximin strategy and Banker's best response; the latter, as we saw in Proposition~\ref{prop}, is not Banker's minimax strategy so is useful primarily for determining the lower value function) is a continuous function of $\theta$ on interval $i$, namely $(\theta_{i-1},\theta_i)$, for $i=1,2,\ldots,131$.  Each $\theta_i$ is a root of a polynomial of degree 13 or less.  In exactly nine of these intervals, the maximum occurs along a single indifference curve rather than at a point of intersection.  At the boundary points there are discontinuities in Banker's best response, whereas the Players' strategies are typically continuous except for a number of discontinuities.  Actually, at the boundary points, solutions from both adjacent intervals apply, so there is nonuniqueness of the Players' strategies at points of discontinuity.  In Figure \ref{indep-p1,p2} we graph $p_1$ and $p_2$ as functions of $\theta$.

\begin{figure}[htb]
\centering
\includegraphics[width=4.5in]{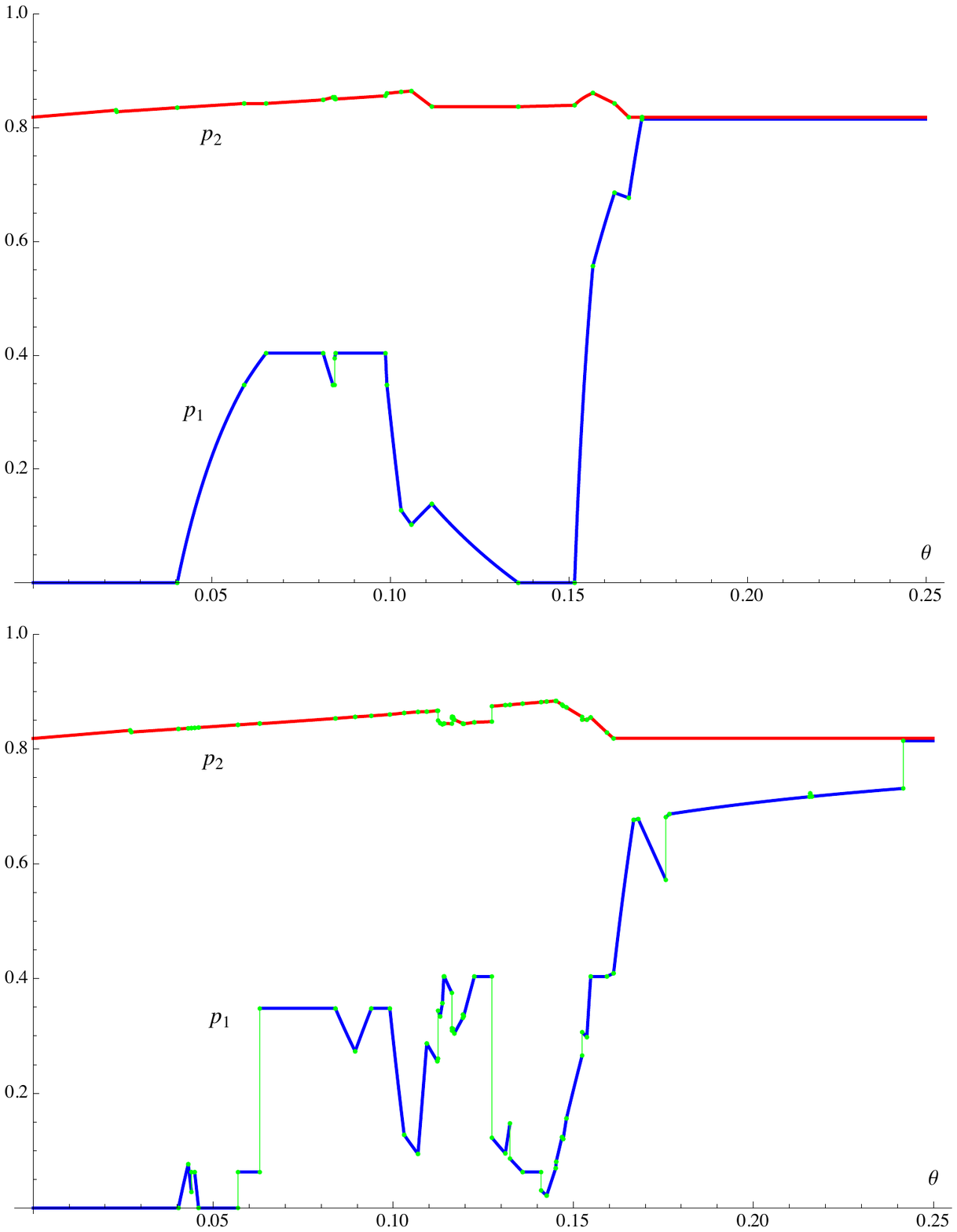}
\caption{\label{indep-p1,p2}The graphs of the Players' strategies $p_1$ and $p_2$ in the independent cooperative equilibrium, restricted to $\theta\in(0,1/4]$.  ($p_1=9/11$ for $0.241681<\theta<0.432877$ and $p_2=9/11$ for $0.161238<\theta<0.495084$, approximately.)  $p_1$ has 13 discontinuities on $(0,1/2]$, whereas $p_2$ has 11 discontinuities.}
\end{figure}

We have seen that the correlated and independent cooperative equilibria coincide when $\theta\in(9588/37663, 55716/128711)\approx(0.254573, 0.432877)$.  With two exceptions, these are the only $\theta$ values at which the two equilibria coincide.  The exceptions are $\theta_{79}\approx0.166815$ and $\theta_{88}\approx0.215651$, as can be seen from the plot of the difference between the upper and lower value functions in Figure \ref{upper-lower}.

\begin{figure}[htb]
\centering
\includegraphics[width=4.5in]{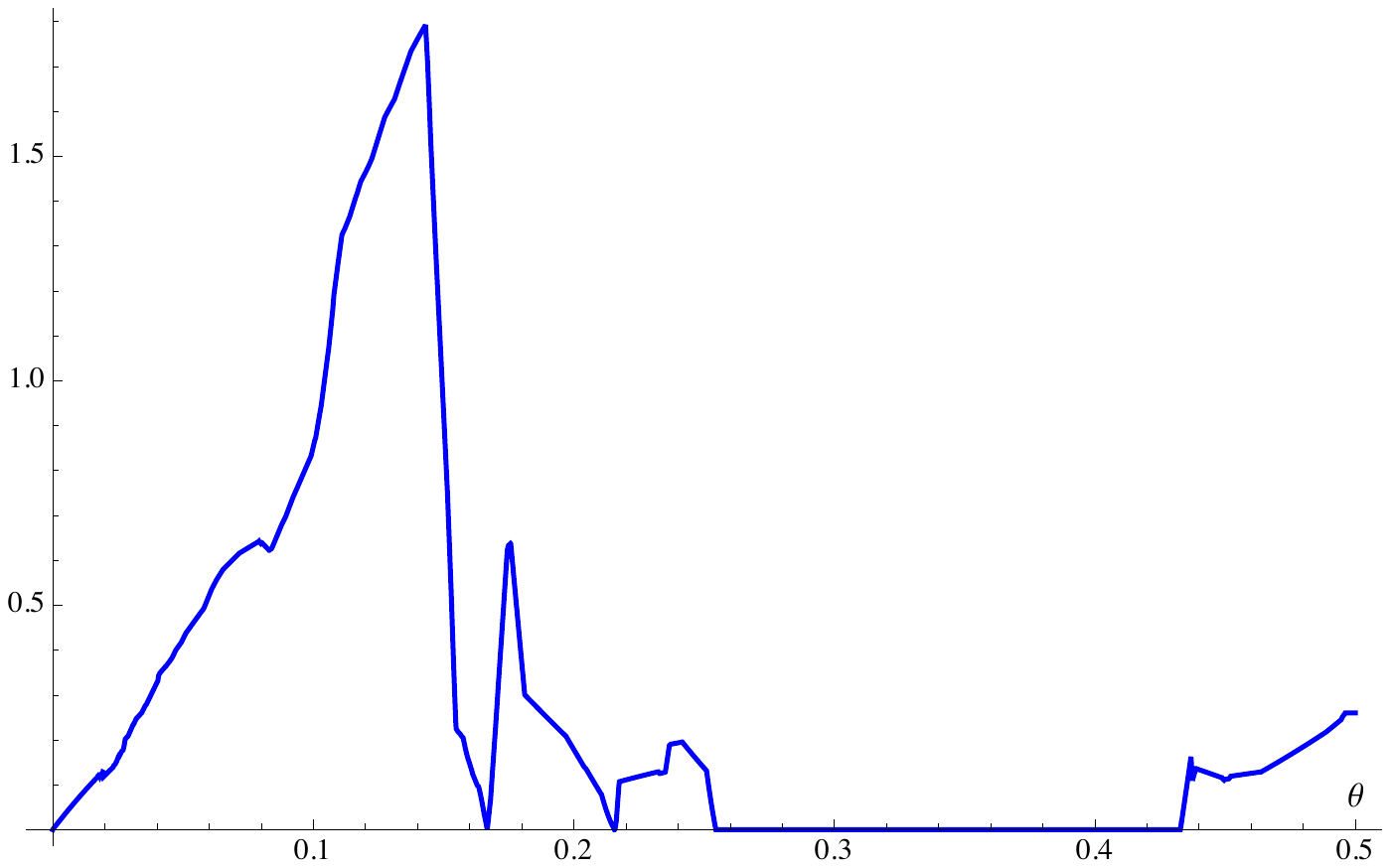}
\caption{\label{upper-lower}The graph of the difference between the upper and lower value functions, multiplied by $10^5$, in the independent cooperative equilibrium.}
\end{figure}

The complete details of the independent cooperative equilibrium are summarized in Appendix B.

\section{Nash equilibrium}\label{Nash}

Here we find the Nash equilibrium of our $2\times2\times2^{1144}$ trimatrix game.  Our method involves finding one or more Nash equilibria explicitly, but it does not permit any uniqueness assertions.  Let us begin with the case $\theta=1/2$.  For now we simply claim that there exists a Nash equilibrium in this case with $p_1=p_2=9/11$.  Banker's best response is as in Table \ref{best, theta=1/2} but with five changes:  Entry 5 at $(10,10)$, $(10,11)$, and $(11,10)$ becomes 5+.  Entry 5+ at $(6,10)$ and $(10,6)$ becomes 6.  Thus, there are now three information sets at which Banker is indifferent, $(10,10,6)$, $(10,11,6)$, and $(11,10,6)$.

Let $\bm A$ and $\bm B$ be the $4\times8$ payoff matrices for Player~1 vs.\ Banker and for Player~2 vs.\ Banker, but with the rows labeled by the Players' pure strategies: SS, SD, DS, DD on 5 by Player~1 and Player~2.  The columns are labeled by Banker's eight pure strategies: SSS, SSD, SDS, SDD, DSS, DSD, DDS, DDD on $(10,10,6)$, $(10,11,6)$, and $(11,10,6)$.  Of course, Banker makes a best response except in the three cases in which he is indifferent.  We find that
\begin{eqnarray*}
&&\!\!\!2^{-4}(13)^9\bm A\\
&&{}\!\!\!=-\begin{pmatrix}
5774122 & 5774122 & 4995370 & 4995370 & 4605994 & 4605994 & 3827242 & 3827242 \\
6359098 & 6359098 & 5580346 & 5580346 & 5580346 & 5580346 & 4801594 & 4801594 \\
5127763 & 5127763 & 5300819 & 5300819 & 5387347 & 5387347 & 5560403 & 5560403 \\
5756515 & 5756515 & 5929571 & 5929571 & 5929571 & 5929571 & 6102627 & 6102627\end{pmatrix}
\end{eqnarray*}
and
\begin{eqnarray*}
&&\!\!\!2^{-4}(13)^9 \bm B\\
&&\!\!\!{}=-\begin{pmatrix}5774122 & 4995370 & 5774122 & 4995370 & 4605994 & 3827242 & 4605994 & 3827242 \\
5127763 & 5300819 & 5127763 & 5300819 & 5387347 & 5560403 & 5387347 & 5560403 \\
6359098 & 5580346 & 6359098 & 5580346 & 5580346 & 4801594 & 5580346 & 4801594 \\
5756515 & 5929571 & 5756515 & 5929571 & 5929571 & 6102627 & 5929571 & 6102627\end{pmatrix}\!.
\end{eqnarray*}

Since the Players must act independently, we let
\begin{eqnarray*}
\bm a(p_1,p_2)&:=&((1-p_1)(1-p_2),(1-p_1)p_2,p_1(1-p_2),p_1p_2)\bm A,\\
\bm b(p_1,p_2)&:=&((1-p_1)(1-p_2),(1-p_1)p_2,p_1(1-p_2),p_1p_2)\bm B.
\end{eqnarray*}
We now determine whether there is a mixture $\bm q=(q_1,q_2,q_3,q_4,q_5,q_6,q_7,q_8)$ of Banker's eight pure strategies such that $\bm a(p_1,9/11)\bm q^{\T}$ is constant in $p_1$ and $\bm b(9/11,p_2)\bm q^{\T}$ is constant in $p_2$.  This would ensure that $p_1=9/11$ is a best response to $p_2=9/11$ and $\bm q$; similarly, $p_2=9/11$ is a best response to $p_1=9/11$ and $\bm q$.  And of course $\bm q$ is automatically a best response to $p_1=p_2=9/11$.  A necessary and sufficient condition on $\bm q$ is
\begin{eqnarray*}
q_6 &=& \frac{15175619}{10469888} - \frac{23 q_1 + 23 q_2 + 12 q_3 + 12 q_4 + 11 q_5}{11},\\
q_7 &=& \frac{15175619}{10469888} - \frac{23 q_1 + 12 q_2 + 23 q_3 + 12 q_4 + 11 q_5}{11},\\
q_8 &=& -\frac{9940675}{5234944}  + \frac{35 q_1 + 24 q_2 + 24 q_3 + 13 q_4 + 11 q_5}{11},
\end{eqnarray*}
and $q_j\ge0$ for $j=1,2,\ldots,8$.  Summing the three equations gives $q_6+q_7+q_8=1-q_1-q_2-q_3-q_4-q_5$, so any such $\bm q$ is automatically a probability vector.

By testing all possible supports of size two or three, we find that the eight Banker pure strategies are mixed in $11$ extreme Nash equilibria as follows:
\begin{eqnarray*}
1.&&(0, 15175619/33313280, 15175619/33313280, 0, 0, 0, 0, 1481021/16656640).\\
2.&&(0, 1/2, 4229827/11421696, 0, 0, 0, 1481021/11421696, 0).\\
3.&&(0, 4229827/11421696, 1/2, 0, 0, 1481021/11421696, 0, 0).\\
4.&&(0, 4705731/12373504, 4705731/12373504, 0, 1481021/6186752, 0, 0, 0).\\
5.&&(0, 3753923/10469888, 3753923/10469888, 1481021/5234944, 0, 0, 0, 0).\\
6.&&(15175619/21891584, 0, 0, 0, 0, 0, 0, 6715965/21891584).\\
7.&&(1988135/3331328, 0, 0, 0, 0, 1343193/6662656, 1343193/6662656, 0).\\
8.&&(1568577/3807232, 0, 0, 0, 2238655/3807232, 0, 0, 0).\\
9.&&(3753923/10469888, 0, 0, 6715965/10469888, 0, 0, 0, 0),\\
10.&&(4229827/10945792, 0, 6715965/21891584, 0, 0, 6715965/21891584, 0, 0).\\
11.&&(4229827/10945792, 6715965/21891584, 0, 0, 0, 0, 6715965/21891584, 0).
\end{eqnarray*}
If Player~1, Player~2, and Banker play according to their equilibrium strategies, Banker's expected gain per unit stake is
\begin{equation*}
\frac{11138203216}{(11)^2(13)^9}\approx0.00868040.
\end{equation*}

A list of extreme Nash equilibria is the usual way to express the solutions of a noncooperative game, but it is unnecessarily complicated in this case.  A better approach is to express these equilibria in terms of behavioral strategies.  Only one of the three information sets, $(10,10,6)$, $(10,11,6)$, and $(11,10,6)$, is encountered during the play of a single game.  Thus, knowing the draw probabilities in each of the three cases is sufficient.  Expressed in terms of these behavioral strategies, the Banker strategies in the 11 extreme equilibria all have the form $(r_1,r_2,r_2)$, and there are only two extreme points, namely
$$
\bigg(0,\frac{6715965}{10469888},\frac{6715965}{10469888}\bigg)\quad\text{and}\quad\bigg(\frac{2238655}{3807232},0,0\bigg).
$$
This completes the derivation in the case $\theta = 1/2$.

In fact the same Nash equilibria apply on the interval $(\theta_*,1/2]$ for $\theta_*=799/1604\approx0.498130$.  (Here they are not $\theta$-dependent.)  This is the first $\theta$ (moving right to left) at which a change occurs in Table \ref{best, theta=1/2} (beyond the five changes already noted).  As $\theta$ moves from $\theta>\theta_*$ to $\theta<\theta_*$, the $(2,5)$ entry in Table \ref{best, theta=1/2} changes from 4 to 5.  

We can repeat this process 40 times.  In each new interval the Players' strategy is the same (independent with $p_1=p_2=9/11$), while Banker's strategy changes from the previous interval.  This determines the Nash equilibria for all $\theta>5772/33847\approx0.170532$.  At the next interval there is no mixture $\bm q$ satisfying the required properties.

Now let us consider what happens when $\theta<5772/33847\approx0.170532$.  Each of the remaining 62 intervals is one of two types:  Banker mixes on two information sets (40 cases), or Banker mixes on one information set and $p_1=0$ (22 cases).  First we consider the interval whose right endpoint is $\theta'':=5772/33847\approx0.170532$.  Suppose we know that Banker mixes on $(10,0,4)$ and on $(11,10,6)$ in this interval.  The intersection of the two indifference curves occurs at $(p_1(\theta),p_2(\theta))$, where 
$$
p_1(\theta)=\frac{1443 - 9304\,\theta}{481 - 3850\,\theta},\quad  p_2(\theta) = \frac{9}{11}.
$$
Evaluating the payoff matrices $\bm A$ and $\bm B$ for Players~1 and 2 (now $4\times 4$), we can argue as above, and this leads to two extreme equilibria, which have the same Banker behavioral strategies.  The left endpoint of the interval is $$\theta':=(82755888 + 1123 \sqrt{2262279009})/803081778\approx0.169559$$ because of a change in Banker's strategy at $(9,8,3)$.  As for the information sets on which Banker mixes in the next interval, there are three candidates, namely any two of the three $(10,0,4)$, $(11,10,6)$, and $(9,8,3)$, with the first two being most likely.  This approach allows us to move from one interval to the next in a systematic way.  In those cases where Banker mixes on only one information set and $p_1=0$, we do not need to have $\bm a(p_1,p_2(\theta))\bm q^\T$ constant in $p_1$; it suffices that it be maximized at $p_1=0$.

In Figure \ref{Nash-p1,p2} we graph $p_1$ and $p_2$ as functions of $\theta$.  We restrict to $\theta\in(0,1/4]$ since $p_1=p_2=9/11$ whenever $\theta>5772/33847\approx0.170532$.  The complete details of the Nash equilibrium are summarized in Appendix C.

\begin{figure}[htb]
\centering
\includegraphics[width=4.5in]{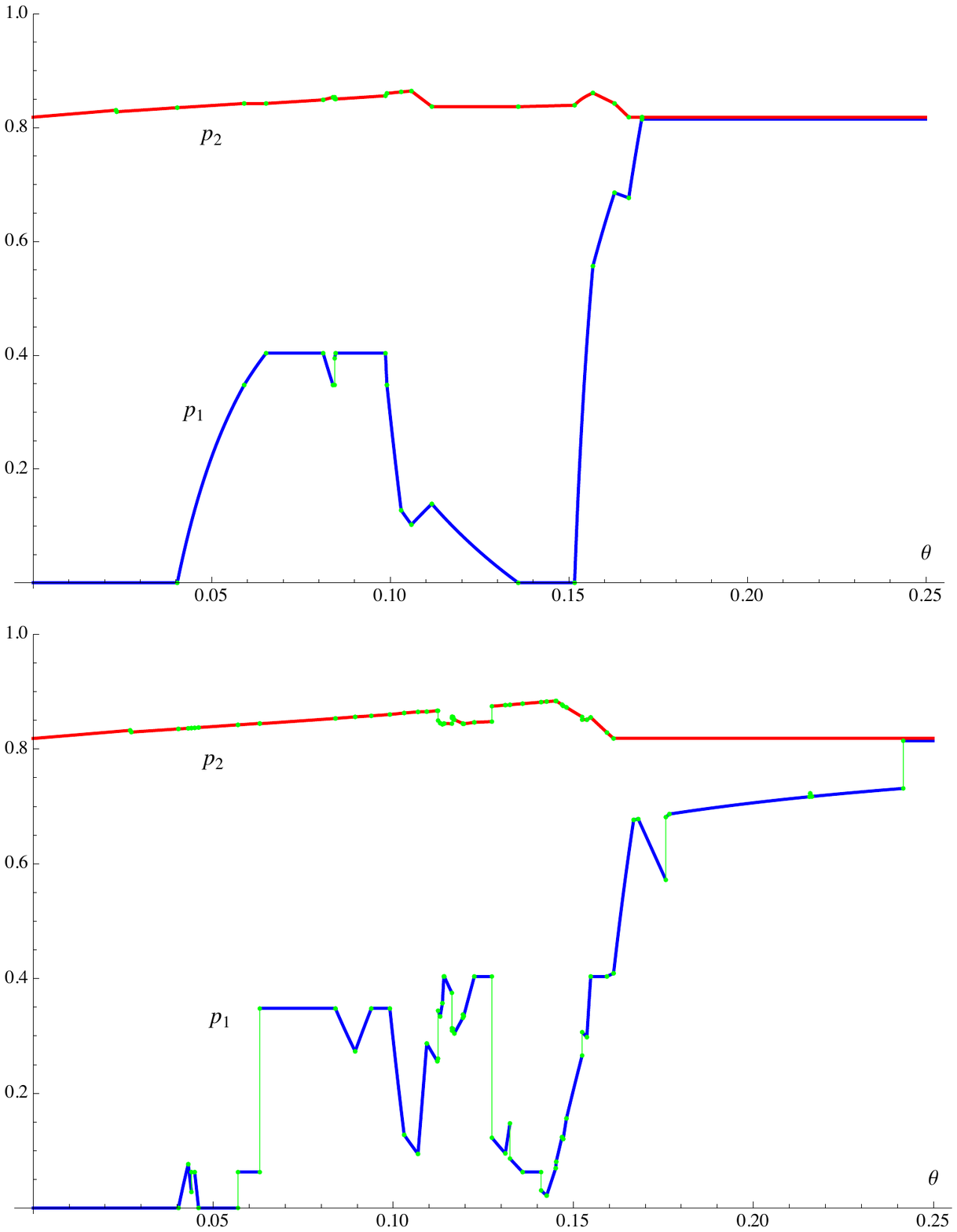}
\caption{\label{Nash-p1,p2}The graphs of the Players' strategies $p_1$ and $p_2$ in the Nash equilibrium, restricted to $\theta\in(0,1/4]$.  ($p_1=p_2=9/11$ for $\theta>5772/33847\approx0.170532$.)  Both $p_1$ and $p_2$ have a unique discontinuity, at $\theta_{31}\approx0.0844782$.}
\end{figure}

Downton and Lockwood (1976) observed that Player~1 has positive expectation for $\theta=1/10$.  The reason is clear:  Banker focusses his attention on Player~2 and therefore plays suboptimally against Player~1.  How large is this expectation for small $\theta$?  Let us consider interval 1 ($0<\theta<0.0172597$), in which Player~2 bets about 56.9383 or more times as much as Player~1.  Player~1's expectation, per unit bet by Player~1, when both Players and Banker use their equilibrium strategies, is
$$
\frac{928 (53214419 - 33787088\,\theta)}{116649493103 (89 - 68\,\theta)},
$$
which is about 0.00475669 at the left endpoint of the interval and about 0.004767\break43 at the right endpoint.  A 0.475\% advantage is substantial but it presumably occurs rarely.

Finally, let us explain why Downton and Lockwood's (1976) Nash equilibrium algorithm is incorrect.  Let $E_\theta^1(p_1,p_2)$ be the expectation of Player~1 per unit bet by Player~1 when Players~1 and 2 independently draw on 5 with probabilities $p_1$ and $p_2$ and Banker makes a best response to $(p_1,p_2)$ and $\theta$.  Let $E_\theta^2(p_1,p_2)$ be defined analogously.  Then, recalling \eqref{E0},
$$
E_\theta^0(p_1,p_2)=\theta E_\theta^1(p_1,p_2)+(1-\theta)E_\theta^2(p_1,p_2).
$$
($E_\theta^0(p_1,p_2)$ is continuous in $(p_1,p_2)$ for fixed $\theta$, but $E_\theta^1(p_1,p_2)$ and $E_\theta^2(p_1,p_2)$ are not.)  Downton and Lockwood (1976) proposed an algorithm for evaluating the Nash equilibrium based on the functions $E_\theta^1(p_1,p_2)$ and $E_\theta^2(p_1,p_2)$.  (We consider it in the context of our simplified model rather than in terms of the more elaborate model they analyzed.)  Specifically, they defined
\begin{equation}\label{p_2(p_1)}
\hat p_2(p_1):=\argmax_{p_2\in[0,1]}E_\theta^2(p_1,p_2),\quad p_1\in[0,1],
\end{equation}
and 
\begin{equation}\label{hats}
\hat p_1:=\argmax_{p_1\in[0,1]}E_\theta^1(p_1,\hat p_2(p_1)),\qquad \hat p_2:=\hat p_2(\hat p_1).
\end{equation}
It appears that the aim was to find $(\hat p_1,\hat p_2)$ such that 
\begin{equation}\label{NE1}
E_\theta^1(\hat p_1,\hat p_2)\ge E_\theta^1(p_1,\hat p_2)\text{ for all $p_1\in[0,1]$}
\end{equation}
and
\begin{equation}\label{NE2}
E_\theta^2(\hat p_1,\hat p_2)\ge E_\theta^2(\hat p_1,p_2)\text{ for all $p_2\in[0,1]$},
\end{equation}
though only the second of these two inequalities actually follows from \eqref{p_2(p_1)} and \eqref{hats}.  Then Banker would make a best response to $(\hat p_1,\hat p_2)$.  While inequality \eqref{NE1} appears to say that $\hat p_1$ is a best response by Player~1, it  does \textit{not} do so because its right-hand side $E_\theta^1(p_1, \hat p_2)$ is defined in terms of a Banker best response to $(p_1,\hat p_2)$, not $(\hat p_1,\hat p_2)$.  Inequality \eqref{NE2} has the same problem.  We observe that, when $\theta=1/2$ and $\hat p_1=\hat p_2=9/11$, \eqref{NE1} and \eqref{NE2} fail, which helps to confirm that the method is flawed.

\section{Summary}
\textit{Baccara banque} is a three-person zero-sum game parameterized by $\theta\in(0,1)$.  The players are called Player~1, Player~2, and Banker, and the amounts bet on the hands of Players~1 and 2 are in the proportions $\theta:1-\theta$.  Assuming cards are dealt with replacement, the game is a $2\times2^{12}\times2^{1144}$ trimatrix game.  Downton and Lockwood (1976) argued that the independent cooperative equilibrium, in which Players~1 and 2 form a coalition against Banker but act independently, is more useful than the Nash equilibrium.  They did not realize that the independent cooperative equilibrium need not exist, in the sense that the lower and upper values of the game may differ.  They also computed the Nash equilibrium incorrectly.

We consider a simplified model, in which Player~2 ignores Player~1's hand, and the game becomes a $2\times2\times2^{1144}$ trimatrix game.  This allows us to assume that $\theta\in(0,1/2]$.  We find what we call the correlated cooperative equilibrium, in which Players~1 and 2 are not constrained to act independently in their coalition against Banker, and the Nash equilibrium.  Moreover, in the independent cooperative equilibrium, we evaluate the game's lower value (to the Players) and its upper value, as well as the corresponding maximin strategy of the Players and minimax strategy of Banker.  

Results are necessarily complicated by the fact that Banker's strategy has more than 100 discontinuities over the interval $(0,1/2]$.  The Players' strategies are simpler, having only a single discontinuity in the Nash equilibrium and at most 13 discontinuities in the independent cooperative equilibrium.  Necessary and sufficient conditions on $\theta$ are given for the independent and correlated cooperative equilibria to coincide.

All three solutions are described in detail in a lengthy appendix.

\newpage

\section*{Appendix A.  Correlated cooperative equilibrium}

\begin{small}
We list the 109 points of discontinuity in $(0,1/2)$ of the correlated cooperative equilibrium.  All are roots of polynomials of degree 4 or less.\medskip

\noindent$\theta_1\approx0.00178731$ (root of $14859 - 8310422\,\theta - 1783296\,\theta^2$)\\
$\theta_2\approx0.0174038$ (root of $5523429 - 312988816\,\theta - 258236096\,\theta^2 + 374933504\,\theta^3 $)\\
$\theta_3\approx0.0181285$ (root of $62061 - 3387468\,\theta - 2042144\,\theta^2 + 3341184\,\theta^3$)\\
$\theta_4\approx0.0189082$ (root of $5523429 - 288064954\,\theta - 220590896\,\theta^2 + 327996416\,\theta^3 $)\\
$\theta_5\approx0.0192925$ (root of $69459 - 3562914\,\theta - 2005040\,\theta^2 + 3472128\,\theta^3 $)\\
$\theta_6\approx0.0194313$ (root of $5523429 - 279757000\,\theta - 238117376\,\theta^2 + 342425600\,\theta^3 $)\\
$\theta_7\approx0.0234950$ (root of $5523429 - 230832382\,\theta - 187612816\,\theta^2 + 273681408\,\theta^3 $)\\
$\theta_8\approx0.0243712$ (root of $5523429 - 222524428\,\theta - 175064416\,\theta^2 + 258035712\,\theta^3 $)\\
$\theta_9\approx0.0244572$ (root of $60965 - 2641495\,\theta + 6110982\,\theta^2 - 1140624\,\theta^3 $)\\
$\theta_{10}\approx0.0250657$ (root of $5523429 - 213028222\,\theta - 302679016\,\theta^2 + 409265088\,\theta^3 $)\\
$\theta_{11}\approx0.0252993$ (root of $4865007 - 185414566\,\theta - 281528608\,\theta^2 + 372929664\,\theta^3 $)\\
$\theta_{12}\approx0.0254210$ (root of $365790 - 13954949\,\theta - 17801764\,\theta^2 + 28135392\,\theta^3$)\\
$\theta_{13}\approx0.0265477$ (root of $6181851 - 224025970\,\theta - 344865592\,\theta^2 + 457867584\,\theta^3$)\\
$\theta_{14}\approx0.0278844$ (root of $5523429 - 188104360\,\theta - 370801588\,\theta^2 + 464234016\,\theta^3$)\\
$\theta_{15}\approx0.0308941$ (root of $6181851 - 190794154\,\theta - 313801288\,\theta^2 + 409265088\,\theta^3$)\\
$\theta_{16}\approx0.0320785$ (root of $5523429 - 163180498\,\theta - 292650880\,\theta^2 + 372929664\,\theta^3$)\\
$\theta_{17}\approx0.0355903$ (root of $5523429 - 147487696\,\theta - 227192292\,\theta^2 + 299189152\,\theta^3$)\\
$\theta_{18}\approx0.0363942$ (root of $4865007 - 128181994\,\theta - 158955480\,\theta^2 + 220151872\,\theta^3$)\\
$\theta_{19}\approx0.0377318$ (root of $5523429 - 139179742\,\theta - 201142056\,\theta^2 + 268754368\,\theta^3$)\\
$\theta_{20}\approx0.0406437$ (root of $62061 - 1516794\,\theta - 294464\,\theta^2 + 1095296\,\theta^3$)\\
$\theta_{21}\approx0.0411247$ (root of $6181851 - 141869536\,\theta - 217278396\,\theta^2 + 286922080\,\theta^3$)\\
$\theta_{22}\approx0.0432219$ (root of $4865007 - 103258132\,\theta - 227078052\,\theta^2 + 275120800\,\theta^3$)\\
$\theta_{23}\approx0.0472920$ (root of $5523429 - 105947926\,\theta - 243214392\,\theta^2 + 293288512\,\theta^3$)\\
$\theta_{24}\approx0.0511173$ (root of $5523429 - 97639972\,\theta - 217164156\,\theta^2 + 262853728\,\theta^3$)\\
$\theta_{25}\approx0.0610313$ (root of $5523429 - 81947170\,\theta - 151705568\,\theta^2 + 189113216\,\theta^3$)\\
$\theta_{26}\approx0.0632010$ (root of $4521 - 76162\,\theta + 73232\,\theta^2$)\\
$\theta_{27}\approx0.0653381$ (root of $6181851 - 83713858\,\theta - 181200024\,\theta^2 + 220151872\,\theta^3$)\\
$\theta_{28}\approx0.0714954$ (root of $4865007 - 62641468\,\theta - 83468756\,\theta^2 + 110075936\,\theta^3$)\\
$\theta_{29}\approx0.0791813$ (root of $4791849 - 54047252\,\theta - 90707308\,\theta^2 + 113588704\,\theta^3$)\\
$\theta_{30}\approx0.0803439$ (root of $23959245 - 261928306\,\theta - 500622944\,\theta^2 + 610645376\,\theta^3$)\\
$\theta_{31}\approx0.0880358$ (root of $5523429 - 57023308\,\theta - 73554860\,\theta^2 + 97808864\,\theta^3$)\\
$\theta_{32}\approx0.0922648$ (root of $6181851 - 59713102\,\theta - 89691200\,\theta^2 + 115976576\,\theta^3$)\\
$\theta_{33}\approx0.100626$ (root of $23959245 - 204695734\,\theta - 378049816\,\theta^2 + 457867584\,\theta^3$)\\
$\theta_{34}\approx0.107945$ (root of $23959245 - 179771872\,\theta - 446172388\,\theta^2 + 512836512\,\theta^3$)\\
$\theta_{35}\approx0.109408$ (root of $23959245 - 188079826\,\theta - 325949344\,\theta^2 + 396998016\,\theta^3$)\\
$\theta_{36}\approx0.111040$ (root of $23959245 - 179771872\,\theta - 373035748\,\theta^2 + 439699872\,\theta^3$)\\
$\theta_{37}\approx0.118237$ (root of $5523429 - 32099446\,\theta - 141677432\,\theta^2 + 152777792\,\theta^3$)\\
$\theta_{38}\approx0.137545$ (root of $23959245 - 139155208\,\theta - 302563092\,\theta^2 + 347791648\,\theta^3$)\\
$\theta_{39}\approx0.143112$ (root of $593073 - 3646986\,\theta - 4309072\,\theta^2 + 5837216\,\theta^3$)\\
$\theta_{40}\approx0.143412$ (root of $65897 + 19507362\,\theta - 153485048\,\theta^2 + 99417792\,\theta^3$)\\
$\theta_{41}\approx0.143932$ (root of $481 - 4126\,\theta + 5448\,\theta^2$)\\
$\theta_{42}\approx0.144546$ (root of $65897 - 4033520\,\theta + 28720932\,\theta^2 - 27466320\,\theta^3$)\\
$\theta_{43}\approx0.151528$ (root of $219817 - 1680414\,\theta + 1516200\,\theta^2$)\\
$\theta_{44}\approx0.152990$ (root of $4339965 - 32754362\,\theta + 28672944\,\theta^2$)\\
$\theta_{45}\approx0.156435$ (root of $3403491 - 14489390\,\theta-57224992\,\theta^2+68843136\,\theta^3$)\\
$\theta_{46}\approx0.157548$ (root of $4397015 - 89027802\,\theta + 444963880\,\theta^2 - 361964352\,\theta^3$)\\
$\theta_{47}\approx0.158564$ (root of $394378 - 3050443\,\theta + 3552264\,\theta^2$)\\
$\theta_{48}\approx0.159147$ (root of $11136593 - 56542746\,\theta - 92823008\,\theta^2 + 52841856\,\theta^3$)\\
$\theta_{49}\approx0.159187$ (root of $213242455 - 1046025684\,\theta - 2315452986\,\theta^2 + 2961237104\,\theta^3$)\\
$\theta_{50}\approx0.159740$ (root of $11136593 - 286621036\,\theta + 1589373892\,\theta^2 - 1472460384\,\theta^3$\\
\hglue1cm${} + 144814080\,\theta^4$)\\
$\theta_{51}\approx0.160944$ (root of $11136593 - 267206316\,\theta + 1358308292\,\theta^2 - 795305824\,\theta^3$)\\
$\theta_{52}\approx0.162296$ (root of $212131111 - 1000450356\,\theta - 2372588922\,\theta^2 + 2978282096\,\theta^3$)\\
$\theta_{53}\approx0.163363$ (root of $9950447 - 56278324\,\theta - 17556612\,\theta^2 - 92216736\,\theta^3$ \\
\hglue1cm${} + 159928320\,\theta^4$)\\
$\theta_{54}\approx0.163431$ (root of $9950447 - 38931444\,\theta - 149550212\,\theta^2 + 93143904\,\theta^3$)\\
$\theta_{55}\approx0.163513$ (root of $140933 - 990706\,\theta + 787712\,\theta^2$)\\
$\theta_{56}\approx0.163559$ (root of $6727103 - 39583584\,\theta - 9452544\,\theta^2$)\\
$\theta_{57}\approx0.163753$ (root of $186183 + 10649570\,\theta - 71977488\,\theta^2$)\\
$\theta_{58}\approx0.168900$ (root of $208377 + 18469120\,\theta - 116653632\,\theta^2$)\\
$\theta_{59}\approx0.171196$ ($=63/368$)$^{\phantom{0}}$\\
$\theta_{60}\approx0.174505$ (root of $432 + 22019\,\theta - 140366\,\theta^2$)\\
$\theta_{61}\approx0.174961$ (root of $864 + 119225\,\theta - 709664\,\theta^2$)\\
$\theta_{62}\approx0.175832$ (root of $1696 + 114441\,\theta - 705712\,\theta^2$)\\
$\theta_{63}\approx0.197056$ (root of $123527 - 14727622\,\theta + 71557040\,\theta^2$)\\
$\theta_{64}\approx0.201498$ ($=20687/102666$)$^{\phantom{0}}$\\
$\theta_{65}\approx0.204690$ (root of $123527 - 15275812\,\theta + 71680736\,\theta^2$)\\
$\theta_{66}\approx0.210684$ (root of $280635 - 34011814\,\theta + 155112880\,\theta^2$)\\
$\theta_{67}\approx0.217304$ (root of $29851341 - 2140863850\,\theta + 8775058896\,\theta^2 + 2046535680\,\theta^3$)\\
$\theta_{68}\approx0.217337$ (root of $314526381 - 1339632286\,\theta - 608162792\,\theta^2 + 521387520\,\theta^3$)\\
$\theta_{69}\approx0.217529$ (root of $25069567 - 109735176\,\theta - 25337856\,\theta^2$)\\
$\theta_{70}\approx0.232468$ (root of $32913017 - 2318653686\,\theta + 8870984112\,\theta^2 + 2125338624\,\theta^3$)\\
$\theta_{71}\approx0.236724$ (root of $29851341 - 2144933602\,\theta + 8080196496\,\theta^2 + 1892517888\,\theta^3$)\\
$\theta_{72}\approx0.237225$ (root of $13549985 - 978787096\,\theta + 3681183360\,\theta^2 + 860012544\,\theta^3$)\\
$\theta_{73}\approx0.251003$ (root of $11580791 - 43540248\,\theta - 10349568\,\theta^2$)\\
$\theta_{74}\approx0.254573$ ($=9588/37663$)\\
$\theta_{75}\approx0.255213$ ($=1591/6234$)\\
$\theta_{76}\approx0.282286$ ($=1141/4042$)\\
$\theta_{77}\approx0.284187$ ($=5796/20395$)\\
$\theta_{78}\approx0.291630$ ($=655/2246$)\\
$\theta_{79}\approx0.312202$ ($=655/2098$)\\
$\theta_{80}\approx0.315534$ ($=65/206$)\\
$\theta_{81}\approx0.317736$ ($=2397/7544$)\\
$\theta_{82}\approx0.322499$ ($=1141/3538$)\\
$\theta_{83}\approx0.332182$ ($=481/1448$)\\
$\theta_{84}\approx0.340483$ ($=2397/7040$)\\
$\theta_{85}\approx0.354185$ ($=1591/4492$)\\
$\theta_{86}\approx0.364699$ ($=655/1796$)\\
$\theta_{87}\approx0.377510$ ($=44268/117263$)\\
$\theta_{88}\approx0.377551$ ($=37/98$)\\
$\theta_{89}\approx0.384504$ ($=799/2078$)\\
$\theta_{90}\approx0.384544$ ($=2901/7544$)\\
$\theta_{91}\approx0.386798$ ($=46044/119039$)\\
$\theta_{92}\approx0.393855$ ($=141/358$)\\
$\theta_{93}\approx0.398947$ ($=1591/3988$)\\
$\theta_{94}\approx0.400756$ ($=1591/3970$)\\
$\theta_{95}\approx0.409866$ ($=1105/2696$)\\
$\theta_{96}\approx0.430543$ ($=967/2246$)\\
$\theta_{97}\approx0.432877$ ($=55716/128711$)\\
$\theta_{98}\approx0.433093$ (root of $702880555 - 205024028\,\theta - 3024826688\,\theta^2 - 575127552\,\theta^3$)\\
$\theta_{99}\approx0.436737$ (root of $1845299991 - 4521504554\,\theta + 1306396544\,\theta^2 - 1437818880\,\theta^3$)\\
$\theta_{100}\approx0.436929$ (root of $69654319 - 143712312\,\theta - 35945472\,\theta^2$)\\
$\theta_{101}\approx0.438827$ (root of $4643 - 72197\,\theta + 140412\,\theta^2$)\\
$\theta_{102}\approx0.438921$ ($=643552/1466215$)$^{\phantom{0}}$\\
$\theta_{103}\approx0.448792$ (root of $16779 - 231714\,\theta + 433000\,\theta^2$)\\
$\theta_{104}\approx0.450027$ (root of $16528913 - 1144331362\,\theta + 2213155728\,\theta^2 + 551159808\,\theta^3$)\\
$\theta_{105}\approx0.452204$ (root of $16653035 - 1155550924\,\theta + 2225068320\,\theta^2 + 550342656\,\theta^3$)\\
$\theta_{106}\approx0.463882$ (root of $77107697 - 9071939465\,\theta + 19222966632\,\theta^2 - 53280768\,\theta^3$)\\
$\theta_{107}\approx0.488948$ (root of $64032147 - 900502651\,\theta + 1573877560\,\theta^2$)\\
$\theta_{108}\approx0.496086$ (root of $6844383 - 1189837640\,\theta + 3546625856\,\theta^2 - 2370528768\,\theta^3$)\\
$\theta_{109}\approx0.496088$ (root of $6896169 - 1190915420\,\theta + 3549548480\,\theta^2 - 2372477184\,\theta^3$)\\
\end{small}
\medskip

\begin{small}
Banker's correlated cooperative equilibrium strategy (excluding mixing probabilities).  Pairs followed by a colon are Player~1's and Player~2's third-card values (10 = stand, 11 = natural).  Entries are maximum Banker drawing totals (e.g., 5 means that Banker draws on 5 or less and stands on 6 or 7).  Plus signs indicate that Banker mixes on next higher total (e.g., 5+ means Banker draws on 5 or less, mixes on 6, and stands on 7).  Number ranges in parentheses refer to the 110 intervals in $(0,1/2]$, with interval $i$ being $(\theta_{i-1},\theta_i)$; here $\theta_0=0$ and $\theta_{110}=1/2$.\medskip

$(0,0)$: 3.  
$(0,1)$: 3.  
$(0,2)$: 4 (1--98), 3 (99--110).  
$(0,3)$: 4.  
$(0,4)$: 5 (1--11), 4+ (12), 4 (13--110).  
$(0,5)$: 5 (1--84), 4 (85--110).  
$(0,6)$: 6 (1--14), 5 (15--90), 4 (91--110).  
$(0,7)$: 6 (1--34), 5 (35--36), 4 (37--79), 3 (80--110).  
$(0,8)$: 2 (1--37), 3 (38--110).  
$(0,9)$: 3.  
$(0,10)$: 5.  
$(0,11)$: 3.

$(1,0)$: 3.  
$(1,1)$: 3.  
$(1,2)$: 4.  
$(1,3)$: 4.  
$(1,4)$: 5 (1--9), 4 (10--110).  
$(1,5)$: 5 (1--92), 4 (93--110).  
$(1,6)$: 6 (1--6), 5 (7--99), 4 (100--110).  
$(1,7)$: 6 (1--30), 5 (31--35), 4 (36--110).  
$(1,8)$: 2 (1--24), 3 (25--110).  
$(1,9)$: 3.  
$(1,10)$: 5.  
$(1,11)$: 3.

$(2,0)$: 3.  
$(2,1)$: 3 (1--50), 3+ (51), 4 (52--110).  
$(2,2)$: 4.  
$(2,3)$: 4.  
$(2,4)$: 5 (1--18), 4 (19--110).  
$(2,5)$: 5 (1--109), 4 (110).  
$(2,6)$: 6 (1--4), 5 (5--110).  
$(2,7)$: 6 (1--29), 5 (30--63), 4 (64--110).  
$(2,8)$: 2 (1--16), 3 (17--110).  
$(2,9)$: 3.  
$(2,10)$: 5.  
$(2,11)$: 4.

$(3,0)$: 3 (1--88), 4 (89--110).  
$(3,1)$: 3 (1--27), 4 (28--110).  
$(3,2)$: 4.  
$(3,3)$: 4.  
$(3,4)$: 5 (1--28), 4 (29--110).  
$(3,5)$: 5.  
$(3,6)$: 6 (1--8), 5+ (9--10), 5 (11--110).  
$(3,7)$: 6 (1--33), 5 (34--86), 4 (87--110).  
$(3,8)$: 2 (1--6), 3 (7--94), 4 (95--110).  
$(3,9)$: 3.  
$(3,10)$: 5.  
$(3,11)$: 4.

$(4,0)$: 3 (1--71), 4 (72--110).  
$(4,1)$: 3 (1--15), 4 (16--110).  
$(4,2)$: 4.  
$(4,3)$: 4.  
$(4,4)$: 5.  
$(4,5)$: 5.  
$(4,6)$: 6 (1--17), 5 (18--110).  
$(4,7)$: 6 (1--38), 5 (39--110).  
$(4,8)$: 2 (1--2), 3 (3--75), 4 (76--110).  
$(4,9)$: 3 (1--84), 4 (85--110).  
$(4,10)$: 5.  
$(4,11)$: 4 (1--26), 5 (27--110).

$(5,0)$: 3 (1--66), 3+ (67--68), 4 (69--110).  
$(5,1)$: 3 (1--12), 3+ (13), 4 (14--110).  
$(5,2)$: 4.  
$(5,3)$: 4 (1--82), 5 (83--110).  
$(5,4)$: 5.  
$(5,5)$: 5.  
$(5,6)$: 6 (1--25), 5 (26--110).  
$(5,7)$: 6 (1--72), 5 (73--110).  
$(5,8)$: 2 (1--7), 3 (8--70), 4 (71--93), 5 (94--110).  
$(5,9)$: 3 (1--81), 4 (82--110).  
$(5,10)$: 5 (1), 5+ (2--8), 5 (9--110).  
$(5,11)$: 5.

$(6,0)$: 3 (1--83), 4 (84--110).  
$(6,1)$: 3 (1--21), 4 (22--110).  
$(6,2)$: 4 (1--105), 5 (106--110).  
$(6,3)$: 4 (1--76), 5 (77--110).  
$(6,4)$: 5.  
$(6,5)$: 5.  
$(6,6)$: 6.  
$(6,7)$: 6.  
$(6,8)$: 2 (1--19), 3 (20--85), 5 (86--110).  
$(6,9)$: 3 (1--103), 3+ (104), 4 (105--110).  
$(6,10)$: 6 (1--44), 5+ (45--55), 6 (56--107), 5+ (108--110).  
$(6,11)$: 6.

$(7,0)$: 3.  
$(7,1)$: 3 (1--32), 4 (33--45), 3 (46--47), 3+ (48), 3 (49--58),  4 (59--110).  
$(7,2)$: 4.  
$(7,3)$: 4.  
$(7,4)$: 5.  
$(7,5)$: 5.  
$(7,6)$: 6.  
$(7,7)$: 6.  
$(7,8)$: 2 (1--31), 3 (32--53), 2+ (54), 2 (55--57), 3 (58--110).  
$(7,9)$: 3.  
$(7,10)$: 6.  
$(7,11)$: 6.

$(8,0)$: 3.  
$(8,1)$: 3.  
$(8,2)$: 4 (1--95), 3 (96--110).  
$(8,3)$: 4.  
$(8,4)$: 5 (1--40), 4 (41--110).  
$(8,5)$: 5.  
$(8,6)$: 6 (1--42), 5 (43--110).  
$(8,7)$: 6 (1--78), 3 (79--110).  
$(8,8)$: 2.  
$(8,9)$: 3.  
$(8,10)$: 5+ (1), 6 (2--10), 5+ (11), 6 (12--13), 5+ (14--39), 5 (40--110). 
$(8,11)$: 2.

$(9,0)$: 3.  
$(9,1)$: 3.  
$(9,2)$: 4 (1--80), 3 (81--110).  
$(9,3)$: 4 (1--108), 3 (109--110).  
$(9,4)$: 5 (1--22), 4 (23--110).  
$(9,5)$: 5 (1--89), 4 (90--110).  
$(9,6)$: 6 (1--23), 5 (24--96), 4 (97--110).  
$(9,7)$: 6 (1--46), 4+ (47), 4 (48--65), 3 (66--110).  
$(9,8)$: 2 (1--64), 3 (65--110).  
$(9,9)$: 3.  
$(9,10)$: 5.  
$(9,11)$: 2 (1--43), 2+ (44), 3 (45--110).

$(10,0)$: 3 (1--39), 3+ (40--61), 4 (62--97), 4+ (98--102), 5 (103--110).  
$(10,1)$: 3 (1--5), 4 (6--87), 5 (88--110).  
$(10,2)$: 4 (1--77), 5 (78--110).  
$(10,3)$: 4 (1--59), 4+ (60), 5 (61--110).  
$(10,4)$: 5.  
$(10,5)$: 5.  
$(10,6)$: 6 (1--20), 5 (21--56), 5+ (57--59), 5 (60--61), 5+ (62--67), 6 (68--69), 5+ (70--73), 6 (74--100),  5 (101--104), 5+ (105--110).  
$(10,7)$: 6.  
$(10,8)$: 2 (1--3), 3 (4--62), 4 (63--68), 4+ (69), 5 (70--110).  
$(10,9)$: 3 (1--73), 3+ (74), 4 (75--91), 5 (92--110).  
$(10,10)$: 5+ (1--43), 5 (44--49), 5+ (50), 5 (51--52), 5+ (53), 5 (54--55), 5+ (56), 5 (57--67), 5+ (68--101), 5 (102), 5+ (103--107), 5 (108--110).  
$(10,11)$: 5 (1--74), 5+ (75--97), 5 (98--110).

$(11,0)$: 3.  
$(11,1)$: 3.  
$(11,2)$: 4.  
$(11,3)$: 4.  
$(11,4)$: 5.  
$(11,5)$: 5.  
$(11,6)$: 6.  
$(11,7)$: 6.  
$(11,8)$: 2.  
$(11,9)$: 3.  
$(11,10)$: 6 (1--48), 5+ (49), 6 (50--51), 5+ (52), 6 (53--54), 5+ (55--106), 5 (107--110).
\end{small}
\bigskip

The information sets on which Banker mixes in the correlated cooperative equilibrium.\par\medskip

\tabcolsep=1.5mm
\begin{center}
\begin{small}
\begin{tabular}{cc}
\hline
\noalign{\smallskip}
intervals & Banker mixes on \\
\noalign{\smallskip} \hline
\noalign{\smallskip}
1, 11, 14--39 & $(8,10,6)$, $(10,10,6)$ \\
2--8 & $(5,10,6)$, $(10,10,6)$ \\
9--10 & $(3,6,6)$, $(10,10,6)$ \\
12 & $(0,4,5)$, $(10,10,6)$ \\
13 & $(5,1,4)$, $(10,10,6)$ \\
40--43 & $(10,0,4)$, $(10,10,6)$ \\
44 & $(9,11,3)$, $(10,0,4)$ \\
45--46 & $(6,10,6)$, $(10,0,4)$ \\
47 & $(6,10,6)$, $(9,7,5)$, $(10,0,4)$ \\
48 & $(6,10,6)$, $(7,1,4)$, $(10,0,4)$ \\
\noalign{\smallskip}
\hline
\end{tabular}
\end{small}
\end{center}
\tabcolsep=1.5mm
\begin{center}
\begin{small}
\begin{tabular}{cc}
\hline
\noalign{\smallskip}
intervals & Banker mixes on \\
\noalign{\smallskip} \hline
\noalign{\smallskip}
49, 52, 55 & $(6,10,6)$, $(10,0,4)$, $(11,10,6)$ \\
50, 53 & $(6,10,6)$, $(10,0,4)$, $(10,10,6)$ \\
51 & $(2,1,4)$, $(6,10,6)$, $(10,0,4)$ \\
54 & $(6,10,6)$, $(7,8,3)$, $(10,0,4)$ \\
56 & $(10,0,4)$, $(10,10,6)$, $(11,10,6)$ \\
57--59 & $(10,0,4)$, $(10,6,6)$, $(11,10,6)$ \\
60 & $(10,0,4)$, $(10,3,5)$, $(11,10,6)$ \\
61 & $(10,0,4)$, $(11,10,6)$ \\
62--66 & $(10,6,6)$, $(11,10,6)$ \\
67 & $(5,0,4)$, $(10,6,6)$, $(11,10,6)$ \\
68 & $(5,0,4)$, $(10,10,6)$, $(11,10,6)$ \\
69 & $(10,8,5)$, $(10,10,6)$, $(11,10,6)$ \\
70--73, 105, 106 & $(10,6,6)$, $(10,10,6)$, $(11,10,6)$ \\
74 & $(10,9,4)$, $(10,10,6)$, $(11,10,6)$ \\
75--97 & $(10,10,6)$, $(10,11,6)$, $(11,10,6)$ \\
98--101 & $(10,0,5)$, $(10,10,6)$, $(11,10,6)$ \\
102 & $(10,0,5)$, $(11,10,6)$ \\
103 & $(10,10,6)$, $(11,10,6)$ \\
104 & $(6,9,4)$, $(10,10,6)$, $(11,10,6)$ \\
107 & $(10,6,6)$, $(10,10,6)$ \\
108--110 & $(6,10,6)$, $(10,6,6)$ \\
\noalign{\smallskip}
\hline
\end{tabular}
\end{small}
\end{center}
\bigskip

\begin{small}
Banker's mixing probabilities in his unique behavioral strategy in the correlated cooperative equilibrium, listed by interval (1--110).  For each interval, we list above the two or three information sets on which Banker mixes when $\theta$ belongs to that interval.  Below we list the probabilities that Banker draws in the case of such information sets.\medskip

\noindent1. $\frac{925509+9561334 \,\theta-77056 \,\theta^2}{208 (4521+6014 \,\theta-8944 \,\theta^2)},\frac{134699760-246855595 \,\theta+388315886 \,\theta^2}{129792 (4521+6014 \,\theta-8944 \,\theta^2)}$\\
2. $\frac{-14859+8310422 \,\theta+1783296 \,\theta^2}{208 (4521+3224 \,\theta-5632 \,\theta^2)},\frac{134699760-328995211 \,\theta+196170944 \,\theta^2}{129792 (4521+3224 \,\theta-5632 \,\theta^2)}$\\
3. $\frac{-37003+8289206 \,\theta+1714944 \,\theta^2}{208 (4521+3224 \,\theta-5632 \,\theta^2)},\frac{134914784-332390955 \,\theta+198590656 \,\theta^2}{129792 (4521+3224 \,\theta-5632 \,\theta^2)}$\\
4. $\frac{-5595+5887638 \,\theta+2816512 \,\theta^2}{208 (4521+3224 \,\theta-5632 \,\theta^2)},\frac{39936400-50848473 \,\theta+20106304 \,\theta^2}{43264 (4521+3224 \,\theta-5632 \,\theta^2)}$\\
5. $\frac{-27739+5775030 \,\theta+2823424 \,\theta^2}{208 (4521+3224 \,\theta-5632 \,\theta^2)},\frac{120024224-147452171 \,\theta+55820480 \,\theta^2}{129792 (4521+3224 \,\theta-5632 \,\theta^2)}$\\
6. $\frac{7413+2960374 \,\theta+4334336 \,\theta^2}{208 (4521+3224 \,\theta-5632 \,\theta^2)},\frac{88171728+61959733 \,\theta-95270720 \,\theta^2}{129792 (4521+3224 \,\theta-5632 \,\theta^2)}$\\
7. $\frac{-36875+2987510 \,\theta+4170496 \,\theta^2}{208 (4521+3224 \,\theta-5632 \,\theta^2)},\frac{88601776+49923637 \,\theta-86095680 \,\theta^2}{129792 (4521+3224 \,\theta-5632 \,\theta^2)}$\\
8. $\frac{-59019+2943510 \,\theta+4147712 \,\theta^2}{208 (4521+3224 \,\theta-5632 \,\theta^2)},\frac{29605600+16575399 \,\theta-28742592 \,\theta^2}{43264 (4521+3224 \,\theta-5632 \,\theta^2)}$\\
9. $\frac{81163-2869046 \,\theta-4150016 \,\theta^2}{32 (692+2327 \,\theta-72 \,\theta^2)},\frac{27013779+93990356 \,\theta+110019202 \,\theta^2}{259584 (692+2327 \,\theta-72 \,\theta^2)}$\\
10. $\frac{76715-2745718 \,\theta-4163840 \,\theta^2}{32 (692+2327 \,\theta-72 \,\theta^2)},\frac{27084979+96520116 \,\theta+110209282 \,\theta^2}{259584 (692+2327 \,\theta-72 \,\theta^2)}$\\
11. $\frac{863653+3996630 \,\theta+2303488 \,\theta^2}{208 (4521+6014 \,\theta-8944 \,\theta^2)},\frac{89221216+110787989 \,\theta+27048622 \,\theta^2}{129792 (4521+6014 \,\theta-8944 \,\theta^2)}$\\
12. $\frac{-58923+2757110 \,\theta+3863808 \,\theta^2}{128 (139+89 \,\theta-2344 \,\theta^2)},\frac{5687143-6801968 \,\theta+7600314 \,\theta^2}{259584 (139+89 \,\theta-2344 \,\theta^2)}$\\
13. $\frac{-58923+2757110 \,\theta+3863808 \,\theta^2}{32 (1523+1752 \,\theta-496 \,\theta^2)},\frac{60426701+69954942 \,\theta-16078884 \,\theta^2}{259584 (1523+1752 \,\theta-496 \,\theta^2)}$\\
14. $\frac{832709+3951958 \,\theta+2019328 \,\theta^2}{208 (4521+6014 \,\theta-8944 \,\theta^2)},\frac{90219440+98383637 \,\theta+26847342 \,\theta^2}{129792 (4521+6014 \,\theta-8944 \,\theta^2)}$\\
15. $\frac{744133+4541654 \,\theta+1418240 \,\theta^2}{208 (4521+6014 \,\theta-8944 \,\theta^2)},\frac{91079536+25992085 \,\theta+86262894 \,\theta^2}{129792 (4521+6014 \,\theta-8944 \,\theta^2)}$\\
16. $\frac{695397+4520374 \,\theta+1420544 \,\theta^2}{208 (4521+6014 \,\theta-8944 \,\theta^2)},\frac{91320192+19915829 \,\theta+89824238 \,\theta^2}{129792 (4521+6014 \,\theta-8944 \,\theta^2)}$\\
17. $\frac{673253+4576406 \,\theta+1345536 \,\theta^2}{208 (4521+6014 \,\theta-8944 \,\theta^2)},\frac{91535216+10306933 \,\theta+96932270 \,\theta^2}{129792 (4521+6014 \,\theta-8944 \,\theta^2)}$\\
18. $\frac{651109+4540086 \,\theta+1343232 \,\theta^2}{208 (4521+6014 \,\theta-8944 \,\theta^2)},\frac{91750240+9140949 \,\theta+96233006 \,\theta^2}{129792 (4521+6014 \,\theta-8944 \,\theta^2)}$\\
19. $\frac{655557+4454902 \,\theta+1352448 \,\theta^2}{208 (4521+6014 \,\theta-8944 \,\theta^2)},\frac{91939632+15553621 \,\theta+88476142 \,\theta^2}{129792 (4521+6014 \,\theta-8944 \,\theta^2)}$\\
20. $\frac{633413+4388118 \,\theta+1375232 \,\theta^2}{208 (4521+6014 \,\theta-8944 \,\theta^2)},\frac{92154656+17217301 \,\theta+85194926 \,\theta^2}{129792 (4521+6014 \,\theta-8944 \,\theta^2)}$\\
21. $\frac{664821+1699094 \,\theta+3092480 \,\theta^2}{208 (4521+6014 \,\theta-8944 \,\theta^2)},\frac{77049072+231822965 \,\theta-113576530 \,\theta^2}{129792 (4521+6014 \,\theta-8944 \,\theta^2)}$\\
22. $\frac{616085+1620246 \,\theta+3153920 \,\theta^2}{208 (4521+6014 \,\theta-8944 \,\theta^2)},\frac{77289728+231567317 \,\theta-114964050 \,\theta^2}{129792 (4521+6014 \,\theta-8944 \,\theta^2)}$\\
23. $\frac{620533+1756950 \,\theta+3010560 \,\theta^2}{208 (4521+6014 \,\theta-8944 \,\theta^2)},\frac{77479120+218587061 \,\theta-104094290 \,\theta^2}{129792 (4521+6014 \,\theta-8944 \,\theta^2)}$\\
24. $\frac{598389+1881590 \,\theta+2905856 \,\theta^2}{208 (4521+6014 \,\theta-8944 \,\theta^2)},\frac{77694144+203687477 \,\theta-91330834 \,\theta^2}{129792 (4521+6014 \,\theta-8944 \,\theta^2)}$\\
25. $\frac{576245+1975766 \,\theta+2826240 \,\theta^2}{208 (4521+6014 \,\theta-8944 \,\theta^2)},\frac{77909168+191617557 \,\theta-81149330 \,\theta^2}{129792 (4521+6014 \,\theta-8944 \,\theta^2)}$\\
26. $\frac{554101+1977590 \,\theta+2819328 \,\theta^2}{208 (4521+6014 \,\theta-8944 \,\theta^2)},\frac{78124192+187990549 \,\theta-78775122 \,\theta^2}{129792 (4521+6014 \,\theta-8944 \,\theta^2)}$\\
27. $\frac{554101+2050806 \,\theta+2766080 \,\theta^2}{208 (4521+6014 \,\theta-8944 \,\theta^2)},\frac{78124192+181474325 \,\theta-72611666 \,\theta^2}{129792 (4521+6014 \,\theta-8944 \,\theta^2)}$\\
28. $\frac{505365+2071990 \,\theta+2775296 \,\theta^2}{208 (4521+6014 \,\theta-8944 \,\theta^2)},\frac{78364848+173144405 \,\theta-65700370 \,\theta^2}{129792 (4521+6014 \,\theta-8944 \,\theta^2)}$\\
29. $\frac{509813+2024950 \,\theta+2779904 \,\theta^2}{208 (4521+6014 \,\theta-8944 \,\theta^2)},\frac{78554240+177096053 \,\theta-70383762 \,\theta^2}{129792 (4521+6014 \,\theta-8944 \,\theta^2)}$\\
30. $\frac{466453+1919990 \,\theta+2800384 \,\theta^2}{208 (4521+6014 \,\theta-8944 \,\theta^2)},\frac{79486960+178520853 \,\theta-75616402 \,\theta^2}{129792 (4521+6014 \,\theta-8944 \,\theta^2)}$\\
31. $\frac{423093+1941206 \,\theta+2732032 \,\theta^2}{208 (4521+6014 \,\theta-8944 \,\theta^2)},\frac{26806560+56278119 \,\theta-23414918 \,\theta^2}{43264 (4521+6014 \,\theta-8944 \,\theta^2)}$\\
32. $\frac{400949+1851638 \,\theta+2800384 \,\theta^2}{208 (4521+6014 \,\theta-8944 \,\theta^2)},\frac{80634704+173696341 \,\theta-75616402 \,\theta^2}{129792 (4521+6014 \,\theta-8944 \,\theta^2)}$\\
33. $\frac{352213+1750006 \,\theta+2907392 \,\theta^2}{208 (4521+6014 \,\theta-8944 \,\theta^2)},\frac{80875360+176638997 \,\theta-79094354 \,\theta^2}{129792 (4521+6014 \,\theta-8944 \,\theta^2)}$\\
34. $\frac{308853+1683190 \,\theta+2923264 \,\theta^2}{208 (4521+6014 \,\theta-8944 \,\theta^2)},\frac{81808080+175602773 \,\theta-81253522 \,\theta^2}{129792 (4521+6014 \,\theta-8944 \,\theta^2)}$\\
35. $\frac{135413+2303478 \,\theta+2376448 \,\theta^2}{208 (4521+6014 \,\theta-8944 \,\theta^2)},\frac{85538960+93886165 \,\theta-15383698 \,\theta^2}{129792 (4521+6014 \,\theta-8944 \,\theta^2)}$\\
36. $\frac{92053+2175734 \,\theta+2442496 \,\theta^2}{208 (4521+6014 \,\theta-8944 \,\theta^2)},\frac{86471680+98509269 \,\theta-22706770 \,\theta^2}{129792 (4521+6014 \,\theta-8944 \,\theta^2)}$\\
37. $\frac{-81387+2169462 \,\theta+2351360 \,\theta^2}{208 (4521+6014 \,\theta-8944 \,\theta^2)},\frac{90202560+72556501 \,\theta-9581906 \,\theta^2}{129792 (4521+6014 \,\theta-8944 \,\theta^2)}$\\
38. $\frac{-169963+2698742 \,\theta+2014464 \,\theta^2}{208 (4521+6014 \,\theta-8944 \,\theta^2)},\frac{91062656+14432725 \,\theta+43437998 \,\theta^2}{129792 (4521+6014 \,\theta-8944 \,\theta^2)}$\\
39. $\frac{-213323+2670070 \,\theta+2025728 \,\theta^2}{208 (4521+6014 \,\theta-8944 \,\theta^2)},\frac{91995376+10935477 \,\theta+44352302 \,\theta^2}{129792 (4521+6014 \,\theta-8944 \,\theta^2)}$\\
40. $\frac{213323-2670070 \,\theta-2025728 \,\theta^2}{2496 (481-5570 \,\theta+3056 \,\theta^2)},\frac{1658359-13283949 \,\theta-29056090 \,\theta^2}{43264 (481-5570 \,\theta+3056 \,\theta^2)}$\\
41. $\frac{208875-2783990 \,\theta-1927936 \,\theta^2}{2496 (481-5570 \,\theta+3056 \,\theta^2)},\frac{5095573-39106247 \,\theta-86691534 \,\theta^2}{129792 (481-5570 \,\theta+3056 \,\theta^2)}$\\
42. $\frac{92615 - 2230062 \,\theta - 667776 \,\theta^2}{2496 (481 - 5454 \,\theta + 4584 \,\theta^2)}, \frac{3988037 - 23548455 \,\theta - 57221838 \,\theta^2}{86528 (481 - 5454 \,\theta + 4584 \,\theta^2)}$\\
43. $\frac{88455-2298382 \,\theta-554496 \,\theta^2}{2496 (481-5454 \,\theta+4584 \,\theta^2)},\frac{3803957-20561255 \,\theta-58440718 \,\theta^2}{86528 (481-5454 \,\theta+4584 \,\theta^2)}$\\
44. $\frac{-3803957+20561255 \,\theta+58440718 \,\theta^2}{2342912\,\theta (3-\theta)},\frac{227 (461+691 \,\theta)}{79872 (3-\theta)}$\\
45. $\frac{11573381-103292423 \,\theta+22812402 \,\theta^2}{624 (12451-143846 \,\theta+133968 \,\theta^2)},\frac{-5048864-202281163 \,\theta+233361096 \,\theta^2}{7488 (12451-143846 \,\theta+133968 \,\theta^2)}$\\
46. $\frac{11977301-106978183 \,\theta+26405042 \,\theta^2}{624 (12451-143846 \,\theta+133968 \,\theta^2)},\frac{-5332784-210309803 \,\theta+241784776 \,\theta^2}{7488 (12451-143846 \,\theta+133968 \,\theta^2)}$\\
47. $\frac{-48475240-541023879 \,\theta+2712557322 \,\theta^2-754816328 \,\theta^3}{624 (89735-1705504 \,\theta+2143084 \,\theta^2-760352 \,\theta^3)},$\\
\hglue6mm$\frac{-213242455+1046025684 \,\theta+2315452986 \,\theta^2-2961237104 \,\theta^3}{48 (89735-1705504 \,\theta+2143084 \,\theta^2-760352 \,\theta^3)},$\\
\hglue6mm$\frac{370053385-6266617959 \,\theta-2641394628 \,\theta^2+7018562336 \,\theta^3}{7488 (89735-1705504 \,\theta+2143084 \,\theta^2-760352 \,\theta^3)}$\\
48. $\frac{-123198092+1091041883 \,\theta+359205050 \,\theta^2-1475858216 \,\theta^3}{208 (69459+2238452 \,\theta-5258404 \,\theta^2+3026528 \,\theta^3)},$\\
\hglue6mm$ \frac{213242455-1046025684 \,\theta-2315452986 \,\theta^2+2961237104 \,\theta^3}{16 (69459+2238452 \,\theta-5258404 \,\theta^2+3026528 \,\theta^3)},$\\
\hglue6mm$\frac{95479423+2816585283 \,\theta-719165568 \,\theta^2-1776998848 \,\theta^3}{2496 (69459+2238452 \,\theta-5258404 \,\theta^2+3026528 \,\theta^3)}$\\
49. $\frac{-54928-275947 \,\theta+2744000 \,\theta^2}{208 (481-4514 \,\theta+2800 \,\theta^2)},\frac{142352-2834323 \,\theta+1001672 \,\theta^2}{2496 (481-4514 \,\theta+2800 \,\theta^2)}, $\\
\hglue6mm$\frac{671062103-5800306644 \,\theta+4646070726 \,\theta^2+296174704 \,\theta^3}{951808 (1-\theta) (481-4514 \,\theta+2800 \,\theta^2)}$\\
50. $\frac{130465959-1495372045 \,\theta+1707815974 \,\theta^2-1150033920 \,\theta^3}{208 (724867-8403814 \,\theta+8914512 \,\theta^2-1173504 \,\theta^3)}, $\\
\hglue6mm$\frac{214524464-6107119193 \,\theta+3148582296 \,\theta^2+2829090816 \,\theta^3}{2496 (724867-8403814 \,\theta+8914512 \,\theta^2-1173504 \,\theta^3)},$\\
\hglue6mm$\frac{213242455-1046025684 \,\theta-2315452986 \,\theta^2+2961237104 \,\theta^3}{2704 (724867-8403814 \,\theta+8914512 \,\theta^2-1173504 \,\theta^3)}$\\               
51. $\frac{213242455-1046025684 \,\theta-2315452986 \,\theta^2+2961237104 \,\theta^3}{48 (23153-949486 \,\theta+1190332 \,\theta^2-355104 \,\theta^3)},$\\
\hglue6mm$ \frac{-123198092+120283463 \,\theta+2322139040 \,\theta^2-1784315036 \,\theta^3}{624 (23153-949486 \,\theta+1190332 \,\theta^2-355104 \,\theta^3)},$\\
\hglue6mm$\frac{95479423-3667499087 \,\theta-1403517698 \,\theta^2+4289189232 \,\theta^3}{7488 (23153-949486 \,\theta+1190332 \,\theta^2-355104 \,\theta^3)}$\\               
52. $\frac{-59088-260587 \,\theta+2732800 \,\theta^2}{208 (481-4514 \,\theta+2800 \,\theta^2)},\frac{145056-2940563 \,\theta+1072328 \,\theta^2}{2496 (481-4514 \,\theta+2800 \,\theta^2)}, $\\
\hglue6mm$\frac{669950759-5754731316 \,\theta+4588934790 \,\theta^2+313219696 \,\theta^3}{951808 (1-\theta) (481-4514 \,\theta+2800 \,\theta^2)}$\\               
53. $\frac{123085495-1411673325 \,\theta+1642838182 \,\theta^2-1145339904 \,\theta^3}{208 (724867-8403814 \,\theta+8914512 \,\theta^2-1173504 \,\theta^3)},$\\
\hglue6mm$ \frac{218599392-6269126489 \,\theta+3319081368 \,\theta^2+2818179072 \,\theta^3}{2496 (724867-8403814 \,\theta+8914512 \,\theta^2-1173504 \,\theta^3)},$\\
\hglue6mm$\frac{212131111-1000450356 \,\theta-2372588922 \,\theta^2+2978282096 \,\theta^3}{2704 (724867-8403814 \,\theta+8914512 \,\theta^2-1173504 \,\theta^3)}$\\               
54. $\frac{-47315718+692994547 \,\theta-346341660 \,\theta^2-448146544 \,\theta^3}{208 (62061+2369578 \,\theta-4947956 \,\theta^2+2592352 \,\theta^3)},$\\
\hglue6mm$ \frac{213124087-962537108 \,\theta-2451756218 \,\theta^2+3019759728 \,\theta^3}{16 (62061 + 2369578\,\theta - 4947956\,\theta^2 + 2592352\,\theta^3)},$\\
\hglue6mm$\frac{85310017+3044468687 \,\theta-376267422 \,\theta^2-2342871792 \,\theta^3}{2496 (62061+2369578 \,\theta-4947956 \,\theta^2+2592352 \,\theta^3)}$\\               
55. $\frac{-57648-306827 \,\theta+2777600 \,\theta^2}{208 (481-4514 \,\theta+2800 \,\theta^2)},\frac{142640-3021203 \,\theta+1122504 \,\theta^2}{2496 (481-4514 \,\theta+2800 \,\theta^2)}, $\\
\hglue6mm$\frac{670943735-5716818068 \,\theta+4509767494 \,\theta^2+354697328 \,\theta^3}{951808 (1-\theta) (481-4514 \,\theta+2800 \,\theta^2)}$\\               
56. $\frac{142640-4173075 \,\theta-2408448 \,\theta^2}{2496 (481-5002 \,\theta)},\frac{157696-632085 \,\theta-2195200 \,\theta^2}{2704 (481-5002 \,\theta)}, $\\
\hglue6mm$\frac{433295863-4920014797 \,\theta+4607208550 \,\theta^2-920027136 \,\theta^3}{951808 (1-\theta) (481-5002 \,\theta)}$\\               
57. $\frac{-22816+4461109 \,\theta+5350200 \,\theta^2}{24960 (1-\theta) (9+280 \,\theta)},\frac{-157696+632085 \,\theta+2195200 \,\theta^2}{2080 (1-\theta) (9+280 \,\theta)}, $\\
\hglue6mm$\frac{40537035+1205694466 \,\theta-2661114501 \,\theta^2+1058937800 \,\theta^3}{4759040 (1-\theta)^2 (9+280 \,\theta)}$\\               
58. $\frac{-22816+4543029 \,\theta+5268280 \,\theta^2}{24960 (1-\theta) (9+280 \,\theta)},\frac{-159136+678325 \,\theta+2150400 \,\theta^2}{2080 (1-\theta) (9+280 \,\theta)}, $\\
\hglue6mm$\frac{40241115+1197439186 \,\theta-2648071781 \,\theta^2+1054446280 \,\theta^3}{4759040 (1-\theta)^2 (9+280 \,\theta)}$\\               
59. $\frac{-23616+4698709 \,\theta+5113400 \,\theta^2}{24960 (1-\theta) (9+280 \,\theta)},\frac{3 (-54432+249895 \,\theta+694400 \,\theta^2)}{2080 (1-\theta) (9+280 \,\theta)}, $\\
\hglue6mm$\frac{39550635+1178207266 \,\theta-2614516101 \,\theta^2+1040813000 \,\theta^3}{4759040 (1-\theta)^2 (9+280 \,\theta)}$\\               
60. $\frac{442656+1283147 \,\theta-835940 \,\theta^2}{49920 (1-\theta) (9+70 \,\theta)},\frac{3 (-54432+249895 \,\theta+694400 \,\theta^2)}{4160 (1-\theta) (9+70 \,\theta)}, $\\
\hglue6mm$\frac{39550635+237069964 \,\theta-635395209 \,\theta^2+216396530 \,\theta^3}{4759040 (1-\theta)^2 (9+70 \,\theta)}$\\               
61. $\frac{-3 (2208+25105 \,\theta)}{29120 (1-10 \,\theta)},\frac{40396933-367661399 \,\theta+234849010 \,\theta^2}{33313280 (1-\theta) (1-10 \,\theta)}$\\               
62. $\frac{3 (35744-215885 \,\theta)}{1040 (1-232 \,\theta)},\frac{19006483-923418635 \,\theta+709668280 \,\theta^2}{4759040 (1-\theta) (1-232 \,\theta)}$\\               
63. $\frac{9 (15728-97615 \,\theta)}{1040 (1-232 \,\theta)},\frac{24298003-1038434315 \,\theta+871808440 \,\theta^2}{4759040 (1-\theta) (1-232 \,\theta)}$\\               
64. $\frac{3 (46944-295405 \,\theta)}{1040 (1-232 \,\theta)},\frac{24158723-1031268235 \,\theta+868838840 \,\theta^2}{4759040 (1-\theta) (1-232 \,\theta)}$\\               
65. $\frac{140752-867655 \,\theta}{1040 (1-232 \,\theta)},\frac{24125843-1023613035 \,\theta+862565560 \,\theta^2}{4759040 (1-\theta) (1-232 \,\theta)}$\\               
66. $\frac{140032-868615 \,\theta}{1040 (1-232 \,\theta)},\frac{23986563-1015474955 \,\theta+858593720 \,\theta^2}{4759040 (1-\theta) (1-232 \,\theta)}$\\               
67. $\frac{411136+609771 \,\theta-10809400 \,\theta^2}{640 (1-\theta) (27+1160 \,\theta)},\frac{-1072128+8868415 \,\theta-8686150 \,\theta^2}{2080 (1-\theta) (27+1160 \,\theta)}, $\\
\hglue6mm$\frac{-277320798+9380893649 \,\theta-9650377190 \,\theta^2+5126929200 \,\theta^3}{9518080 (1-\theta)^2 (27+1160 \,\theta)}$\\               
68. $\frac{1566288-7508941 \,\theta+1916928 \,\theta^2}{64 (1443-3776 \,\theta+3840 \,\theta^2)},\frac{1128288-6511775 \,\theta+6273350 \,\theta^2}{2704 (1443-3776 \,\theta+3840 \,\theta^2)}, $\\
\hglue6mm$\frac{3 (-234698601+2106617848 \,\theta-1302007712 \,\theta^2+60047360 \,\theta^3)}{951808 (1-\theta) (1443-3776 \,\theta+3840 \,\theta^2)}$\\                                                       
69. $\frac{-1473936 + 7267277 \,\theta - 1671168 \,\theta^2}{208 (1591 + 2220 \,\theta - 2304 \,\theta^2)}, \frac{-1023584 +5938577 \,\theta - 4025130 \,\theta^2}{2704 (1591 + 2220 \,\theta - 2304 \,\theta^2)}, $\\
\hglue6mm$\frac{1227448545 + 172448755 \,\theta - 3186435220 \,\theta^2 + 968146944 \,\theta^3}{951808 (1 -\theta) (1591 +2220 \,\theta - 2304 \,\theta^2)}$\\               
70. $\frac{1836272-9645965 \,\theta+552960 \,\theta^2}{208 (151-13656 \,\theta-3072 \,\theta^2)},\frac{504096-1573429 \,\theta-8719160 \,\theta^2}{2704 (151-13656 \,\theta-3072 \,\theta^2)}, $\\
\hglue6mm$\frac{116495745-11182282057 \,\theta+12956078952 \,\theta^2-810123264 \,\theta^3}{951808 (1-\theta) (151-13656 \,\theta-3072 \,\theta^2)}$\\               
71. $\frac{1810816-9681485 \,\theta+589824 \,\theta^2}{208 (151-13656 \,\theta-3072 \,\theta^2)},\frac{495616-1461269 \,\theta-8822840 \,\theta^2}{2704 (151-13656 \,\theta-3072 \,\theta^2)},$\\
\hglue6mm$\frac{118812689-11388664009 \,\theta+13269797736 \,\theta^2-795672576 \,\theta^3}{951808 (1-\theta) (151-13656 \,\theta-3072 \,\theta^2)}$\\               
72. $\frac{1718464-9296077 \,\theta+393216 \,\theta^2}{208 (151-13656 \,\theta-3072 \,\theta^2)},\frac{478336-2145429 \,\theta-8121400 \,\theta^2}{2704 (151-13656 \,\theta-3072 \,\theta^2)}, $\\
\hglue6mm$\frac{106896977-10274714825 \,\theta+12423289704 \,\theta^2-603389952 \,\theta^3}{951808 (1-\theta) (151-13656 \,\theta-3072 \,\theta^2)}$\\               
73. $\frac{1707984-9236909 \,\theta+368640 \,\theta^2}{208 (151-13656 \,\theta-3072 \,\theta^2)},\frac{476576-2237109 \,\theta-8027960 \,\theta^2}{2704 (151-13656 \,\theta-3072 \,\theta^2)}, $\\
\hglue6mm$\frac{105242017-10120590057 \,\theta+12298962024 \,\theta^2-580583424 \,\theta^3}{951808 (1-\theta) (151-13656 \,\theta-3072 \,\theta^2)}$\\               
74. $\frac{-1676576+6396461 \,\theta-1007616 \,\theta^2}{624 (799-4296 \,\theta-1024 \,\theta^2)},\frac{459232-440039 \,\theta-8027960 \,\theta^2}{8112 (799-4296 \,\theta-1024 \,\theta^2)}, $\\
\hglue6mm$\frac{556876633-3740462513 \,\theta+3474862232 \,\theta^2-96796672 \,\theta^3}{951808 (1-\theta) (799-4296 \,\theta-1024 \,\theta^2)}$\\               
75. $\frac{15 (5+4 \,\theta)}{1859},\frac{-2175152+9077165 \,\theta-368640 \,\theta^2}{10469888 \,\theta},\frac{7666637-9664557 \,\theta+368640 \,\theta^2}{10469888 (1-\theta)}$\\               
76. $\frac{68 (1+\theta)}{1859},\frac{-2149696+9076749 \,\theta-417792 \,\theta^2}{10469888 \,\theta},\frac{3 (2611807-3318927 \,\theta+139264 \,\theta^2)}{10469888 (1-\theta)}$\\               
77. $\frac{71+62 \,\theta}{1859},\frac{-2131440+9093965 \,\theta-380928 \,\theta^2}{10469888 \,\theta},\frac{3 (2587695-3297999 \,\theta+126976 \,\theta^2)}{10469888 (1-\theta)}$\\               
78. $\frac{2 (3+70 \,\theta)}{1859},\frac{-2633760+9762061 \,\theta-860160 \,\theta^2}{10469888 \,\theta},\frac{7763085-10805869 \,\theta+860160 \,\theta^2}{10469888 (1-\theta)}$\\               
79. $\frac{21+146 \,\theta}{1859},\frac{-2602320+9261133 \,\theta-897024 \,\theta^2}{10469888 \,\theta},\frac{7401405-10557421 \,\theta+897024 \,\theta^2}{10469888 (1-\theta)}$\\               
80. $\frac{41+90 \,\theta}{1859},\frac{-2560400+9743181 \,\theta-552960 \,\theta^2}{10469888 \,\theta},\frac{6919165-9823469 \,\theta+552960 \,\theta^2}{10469888 (1-\theta)}$\\               
81. $\frac{2 (18+47 \,\theta)}{1859},\frac{-2542720+9774189 \,\theta-577536 \,\theta^2}{10469888 \,\theta},\frac{7039725-10006957 \,\theta+577536 \,\theta^2}{10469888 (1-\theta)}$\\               
82. $\frac{37+92 \,\theta}{1859},\frac{-2504368+9747661 \,\theta-565248 \,\theta^2}{10469888 \,\theta},\frac{7015613-10052909 \,\theta+565248 \,\theta^2}{10469888 (1-\theta)}$\\               
83. $\frac{8 (5+11 \,\theta)}{1859},\frac{-2486112+9728941 \,\theta-540672 \,\theta^2}{10469888 \,\theta},\frac{6943277-9994349 \,\theta+540672 \,\theta^2}{10469888 (1-\theta)}$\\               
84. $\frac{4 (19+10 \,\theta)}{1859},\frac{-2393760+9631021 \,\theta-245760 \,\theta^2}{10469888 \,\theta},\frac{6075245-9017069 \,\theta+245760 \,\theta^2}{10469888 (1-\theta)}$\\               
85. $\frac{81+40 \,\theta}{1859},\frac{-2202000+9318701 \,\theta-245760 \,\theta^2}{10469888 \,\theta},\frac{5954685-9267949 \,\theta+245760 \,\theta^2}{10469888 (1-\theta)}$\\               
86. $\frac{67+48 \,\theta}{1859},\frac{-2151088+9461613 \,\theta-294912 \,\theta^2}{10469888 \,\theta},\frac{6292253-9747501 \,\theta+294912 \,\theta^2}{10469888 (1-\theta)}$\\               
87. $\frac{8 (9+5 \,\theta)}{1859},\frac{-2140608+9474317 \,\theta-245760 \,\theta^2}{10469888 \,\theta},\frac{6171693-9596045 \,\theta+245760 \,\theta^2}{10469888 (1-\theta)}$\\               
88. $\frac{-19+144 \,\theta}{1859},\frac{-2907920+10567149 \,\theta-884736 \,\theta^2}{10469888 \,\theta},\frac{3 (2057231-3555887 \,\theta+294912 \,\theta^2)}{10469888 (1-\theta)}$\\               
89. $\frac{17+120 \,\theta}{1859},\frac{-2815568+10037997 \,\theta-737280 \,\theta^2}{10469888 \,\theta},\frac{3 (1767887-3268143 \,\theta+245760 \,\theta^2)}{10469888 (1-\theta)}$\\               
90. $\frac{2 (9+64 \,\theta)}{1859},\frac{-2777216+9831789 \,\theta-786432 \,\theta^2}{10469888\,\theta},\frac{5279549-9890861 \,\theta+786432 \,\theta^2}{10469888 (1-\theta)}$\\               
91. $\frac{30 (1+4 \,\theta)}{1859},\frac{-2591552+9500525 \,\theta-737280 \,\theta^2}{10469888 \,\theta},\frac{4990205-9849517 \,\theta+737280 \,\theta^2}{10469888 (1-\theta)}$\\               
92. $\frac{147+16 \,\theta}{1859},\frac{-3389648+9346189 \,\theta-98304 \,\theta^2}{10469888 \,\theta},\frac{4990205-9643181 \,\theta+98304 \,\theta^2}{10469888 (1-\theta)}$\\               
93. $\frac{4 (37+2 \,\theta)}{1859},\frac{-3351296+9427469 \,\theta-49152 \,\theta^2}{10469888 \,\theta},\frac{4966093-9628941 \,\theta+49152 \,\theta^2}{10469888 (1-\theta)}$\\               
94. $\frac{141+14 \,\theta}{1859},\frac{-3325840+9462989 \,\theta-86016 \,\theta^2}{10469888 \,\theta},\frac{5134877-9872941 \,\theta+86016 \,\theta^2}{10469888 (1-\theta)}$\\               
95. $\frac{2 (67+12 \,\theta)}{1859},\frac{-3300384+9426637 \,\theta-147456 \,\theta^2}{10469888 \,\theta},\frac{5303661-10141229 \,\theta+147456 \,\theta^2}{10469888 (1-\theta)}$\\               
96. $\frac{3 (43+12 \,\theta)}{1859},\frac{-3282704+9313901 \,\theta-221184 \,\theta^2}{10469888 \,\theta},\frac{5424221-10360973 \,\theta+221184 \,\theta^2}{10469888 (1-\theta)}$\\               
97. $\frac{6 (22+7 \,\theta)}{1859},\frac{-3236288+9087341 \,\theta-258048 \,\theta^2}{10469888 \,\theta},\frac{5351885-10386893 \,\theta+258048 \,\theta^2}{10469888 (1-\theta)}$\\               
98. $\frac{-3236288+9087341 \,\theta-258048 \,\theta^2}{832 (4643-3136 \,\theta)},\frac{597248+507655 \,\theta-215040 \,\theta^2}{2704 (4643-3136 \,\theta)}, $\\
\hglue6mm$\frac{2258982005-5298034229 \,\theta+1351794912 \,\theta^2-24772608 \,\theta^3}{951808 (1-\theta) (4643-3136 \,\theta)}$\\               
99. $\frac{-3165568+9786349 \,\theta-159744 \,\theta^2}{832 (4643-3136 \,\theta)},\frac{468608+554375 \,\theta-133120 \,\theta^2}{2704 (4643-3136 \,\theta)}, $\\
\hglue6mm$\frac{2462531125-5649925813 \,\theta+1313911008 \,\theta^2-15335424 \,\theta^3}{951808 (1-\theta) (4643-3136 \,\theta)}$\\               
100. $\frac{-3119152+9847277 \,\theta-98304 \,\theta^2}{832 (4643-3136 \,\theta)},\frac{493088+478695 \,\theta-81920 \,\theta^2}{2704 (4643-3136 \,\theta)}, $\\
\hglue6mm$\frac{2431998757-5606528181 \,\theta+1255145696 \,\theta^2-9437184 \,\theta^3}{951808 (1-\theta) (4643-3136 \,\theta)}$\\               
101. $\frac{-3150560+12687725 \,\theta+540672 \,\theta^2}{832 (4643-3136 \,\theta)},\frac{226848+212455 \,\theta+450560 \,\theta^2}{2704 (4643-3136 \,\theta)}, $\\
\hglue6mm$\frac{2431998757-5330883253 \,\theta+266170592 \,\theta^2+51904512 \,\theta^3}{951808 (1-\theta) (4643-3136 \,\theta)}$\\               
102. $\frac{-289952+1199975 \,\theta}{4160 (85-64 \,\theta)},\frac{229873711-506046847 \,\theta+54464160 \,\theta^2}{4759040 (1-\theta) (85-64 \,\theta)}$\\               
103. $\frac{643552-1466215 \,\theta}{86528 (1-12 \,\theta)},\frac{-6489737+15582418 \,\theta-3235968 \,\theta^2}{951808 (1-\theta) (1-12 \,\theta)}$\\               
104. $\frac{7013536-15296877 \,\theta-540672 \,\theta^2}{48 (799+196 \,\theta+512 \,\theta^2)},\frac{1083872-4138343 \,\theta+5724060 \,\theta^2}{8112 (799+196 \,\theta+512 \,\theta^2)}, $\\
\hglue6mm$\frac{294691771-94558498 \,\theta-1051044150 \,\theta^2+441237504 \,\theta^3}{2855424 (1-\theta) (799+196 \,\theta+512 \,\theta^2)}$\\               
105. $\frac{6975184-15306285 \,\theta-565248 \,\theta^2}{208 (151-13656 \,\theta-3072 \,\theta^2)},\frac{3 (643552-561383 \,\theta-3345000 \,\theta^2)}{2704 (151-13656 \,\theta-3072 \,\theta^2)},$\\
\hglue6mm$ \frac{3 (26254219-3078030947 \,\theta+6452715384 \,\theta^2-32636928 \,\theta^3)}{951808 (1-\theta) (151-13656 \,\theta-3072 \,\theta^2)}$\\               
106. $\frac{6936544-15290829 \,\theta-614400 \,\theta^2}{208 (151-13656 \,\theta-3072 \,\theta^2)},\frac{1921216-1809109 \,\theta-9900600 \,\theta^2}{2704 (151-13656 \,\theta-3072 \,\theta^2)},$\\
\hglue6mm$ \frac{77107697-9071939465 \,\theta+19222966632 \,\theta^2-53280768 \,\theta^3}{951808 (1-\theta) (151-13656 \,\theta-3072 \,\theta^2)}$\\               
107. $\frac{6425897-15661618 \,\theta+3299328 \,\theta^2}{208 (151-14100 \,\theta+15456 \,\theta^2)},\frac{64032147-900502651 \,\theta+1573877560 \,\theta^2}{86528 (151-14100 \,\theta+15456 \,\theta^2)}$\\               
108. $\frac{64594371-1176979579 \,\theta+1850354488 \,\theta^2}{624 (901-443072 \,\theta+443072 \,\theta^2)},\frac{754077120-2548457077 \,\theta+1857619768 \,\theta^2}{624 (901-443072 \,\theta+443072 \,\theta^2)}$\\               
109. $\frac{64273971-1182223259 \,\theta+1859945528 \,\theta^2}{624 (901-443072 \,\theta+443072 \,\theta^2)},\frac{750080400-2542979957 \,\theta+1856478008 \,\theta^2}{624 (901-443072 \,\theta+443072 \,\theta^2)}$\\               
110. $\frac{21311777-393439433 \,\theta+620812136 \,\theta^2}{208 (901-443072 \,\theta+443072 \,\theta^2)},\frac{248684480-848184839 \,\theta+620812136 \,\theta^2}{208 (901-443072 \,\theta+443072 \,\theta^2)}$
\end{small}\bigskip

\begin{small}
Players' joint strategy in the correlated cooperative equilibrium. ($p_{00}=P(\text{both}$ $\text{Players stand on 5})$, $p_{01}=P(\text{Player~1 stands on 5, Player~2 draws on 5})$, $p_{10}=\break P(\text{Player~1 draws on 5, Player~2 stands on 5})$, and $p_{11}=P(\text{both Players draw on 5})$.)  This strategy is included for completeness, but it cannot be recommended because it is illegal for the Players to collaborate during the course of play.
\end{small}\bigskip

\begin{small}
\noindent(intervals 1, 11, 14--39)\quad $p_{00} = \frac{2 (411 - 2666 \, \theta + 664 \,\theta^2)}{4521 + 6014 \,\theta - 8944 \,\theta^2}, \quad p_{01} = \frac{3 (1233 + 2070 \,\theta - 1712 \,\theta^2)}{4521 + 6014 \, \theta - 8944 \, \theta^2},$\\
$p_{10} = \frac{5136 \,\theta(1 - \theta)}{4521 +6014 \, \theta - 8944 \, \theta^2}, \quad p_ {11} = 0$.\\

\noindent(intervals 2--8)\quad $p_{00} = \frac{2 (411-1820 \, \theta - 704 \, \theta^2)}{4521 + 3224 \, \theta - 5632 \, \theta^2}, \quad
p_{01} = \frac{3 (1233 + 1584 \, \theta - 704 \, \theta^2)}{4521 + 3224 \, \theta - 5632 \, \theta^2},$\\
$p_{10} = \frac{2112 \,\theta (1 -  \theta)}{4521 + 3224 \, \theta - 5632 \, \theta^2}, \quad p_ {11} = 0$.\\

\noindent(intervals 9--10)\quad $p_ {00} = \frac{15 (115 - 3562 \, \theta + 6104 \, \theta^2)}{4 (692+ 2327 \, \theta - 72 \, \theta^2)}, \quad p_ {01} = \frac{21783 - 69220 \,\theta - 367392 \, \theta^2}{32 (692 + 2327 \, \theta - 72 \, \theta^2)},$\\
$p_ {10} = \frac{-13439 + 571124 \,\theta - 367392 \, \theta^2}{32 (692 + 2327 \, \theta - 72 \, \theta^2)}, \quad p_ {11} = 0$.\\

\noindent(interval 12)\quad $p_ {00} = \frac{5 (-195 +8774 \, \theta - 16400 \, \theta^2)}{4 (139 + 89 \, \theta - 2344 \, \theta^2 )}, \quad
p_ {01} = \frac{411 + 122320 \, \theta + 290496 \, \theta^2}{32 (139 + 89 \, \theta - 2344 \, \theta^2)},$\\
$p_ {10} = \frac{11837 - 470432 \, \theta + 290496 \, \theta^2}{32 (139 + 89 \, \theta - 2344 \, \theta^2)}, \quad p_ {11} = 0$.\\

\noindent(interval 13)\quad $p_ {00} = \frac{5 (495 - 12298 \, \theta + 20536 \, \theta^2)}{4 (1523 + 1752 \, \theta - 496 \, \theta^2)}, \quad
p_ {01} = \frac{43977 - 83404 \, \theta - 418656 \, \theta^2}{32 (1523 + 1752 \, \theta - 496 \, \theta^2)},$\\
$p_ {10} = \frac{-15041 + 631388 \, \theta - 418656 \, \theta^2}{32 (1523 + 1752 \, \theta - 496 \, \theta^2)}, \quad
p_ {11} = 0$.\\

\noindent(intervals 40--41)\quad $p_ {00} = \frac{2 (-481 +4126 \, \theta - 5448 \, \theta^2)}{481 - 5570 \, \theta + 3056 \, \theta^2}, \quad
p_ {01} = \frac{2 \,\theta (-1263 + 3488 \, \theta)}{481 - 5570 \, \theta + 3056 \, \theta^2},$\\
$p_ {10} = \frac{1443 - 11296 \, \theta + 6976 \, \theta^2}{481 - 5570 \, \theta + 3056 \, \theta^2}, \quad p_ {11} = 0$.\\

\noindent(intervals 42--43)\quad $p_ {00} = 0, \quad p_{01} =\frac{2 (-481+ 3147 \,\theta - 2940 \,\theta^2)}{481 - 5454 \,\theta + 4584 \,\theta^2},$\\
$p_{10}=\frac{210 \,\theta (3 - 28 \,\theta)}{481 - 5454 \,\theta + 4584 \,\theta^2}, \quad p_{11}=\frac{3 (481 - 4126 \,\theta + 5448 \,\theta^2)}{481 - 5454 \,\theta + 4584 \,\theta^2}$.\\

\noindent(interval 44)\quad $p_ {00} = 0, \quad p_ {01} = \frac{105}{176}, \quad
p_ {10} = \frac{35 (-457 + 3514 \,\theta)}{5632 (3 -\theta)},
\quad 
p_ {11} = \frac{22811 - 125262 \,\theta}{5632 (3 -\theta)}$.\\

\noindent(intervals 45--46)\quad $p_ {00} = 0, \quad
p_ {01} = \frac{10 (-10101 + 67307 \, \theta - 57480 \, \theta^2)}{3 (12451 - 143846 \,\theta + 133968 \,\theta^2)},$\\
$p_ {10} = \frac{70 (137 - 1502 \, \theta + 1896 \, \theta^2)}{3 (12451 - 143846 \,\theta + 133968 \,\theta^2)}, \quad p_ {11} = \frac{128773 - 999468 \, \theta + 843984 \, \theta^2}{3 (12451 - 143846 \,\theta + 133968 \,\theta^2)}$.\\

\noindent(interval 47)\quad $p_ {00} = \frac{25 (-4397015 + 89027802 \, \theta - 444963880 \, \theta^2 + 361964352 \, \theta^3)}{256 (89735 - 1705504 \,\theta + 2143084 \, \theta^2 - 760352 \,\theta^3)},$\\ 
$p_ {01} = \frac{5 (156408105 - 2049708422 \, \theta + 7190327384 \, \theta^2 - 5317767360 \, \theta^3)}{768 (89735- 1705504 \,\theta+ 2143084 \, \theta^2 - 760352 \,\theta^3)},$\\
$p_ {10} = \frac{5 (55904905- 1154912102 \, \theta + 5702450392 \, \theta^2 - 4862813376 \, \theta^3)}{768 (89735 - 1705504 \,\theta + 2143084 \, \theta^2 - 760352 \,\theta^3)},$\\
$p_ {11} = \frac{-662872445 + 8036190398 \, \theta - 29445709368 \, \theta^2 + 23171626944 \, \theta^3}{768 (89735 - 1705504 \,\theta + 2143084 \, \theta^2 - 760352 \,\theta^3)}$.\\

\noindent(interval 48)\quad $p_ {00} = \frac{25 (-3403491 + 14489390 \,\theta + 57224992 \, \theta^2 - 68843136 \,\theta^3)}{256 (69459 + 2238452 \,\theta - 5258404 \, \theta^2 + 3026528 \,\theta^3)},$\\
$p_ {01} = \frac{5 (40355679 - 222492902 \, \theta + 58289248 \, \theta^2 + 126103680 \,\theta^3)}{256 (69459 + 2238452 \,\theta - 5258404 \, \theta^2 + 3026528 \,\theta^3)},$\\
$p_ {10} = \frac{5 (14424319 - 35160198 \,\theta - 308072864 \, \theta^2 + 357017728 \,\theta^3)}{256 (69459 + 2238452 \,\theta - 5258404 \, \theta^2 + 3026528 \,\theta^3)},$\\
$p_ {11} = \frac{-171031211 + 1499074462 \,\theta - 1527858144 \, \theta^2 + 80262528 \,\theta^3}{256 (69459 + 2238452 \,\theta - 5258404 \, \theta^2 + 3026528 \,\theta^3)}$.\\

\noindent(intervals 49, 52, 55)\quad $p_ {00} = \frac{-3848 + 36683 \, \theta - 82014 \, \theta^2}{22 (481 - 4514 \, \theta + 2800 \, \theta^2)}, \quad 
p_ {01} = \frac{3 (-5772 + 33139 \, \theta -1750 \, \theta^2)}{22 (481 - 4514 \, \theta + 2800 \, \theta^2)},$\\
$p_ {10} = \frac{5772 - 54739 \, \theta + 93214 \, \theta^2}{22 (481 - 4514 \, \theta + 2800 \, \theta^2)}, \quad
p_ {11} = \frac{3 (8658 - 60223 \, \theta + 18550 \, \theta^2)}{22 (481 - 4514 \, \theta + 2800 \, \theta^2)}$.\\

\noindent(intervals 50, 53)\quad $p_ {00} = \frac{4 (-65897 + 1146867 \, \theta - 5284716 \, \theta^2 + 4245120 \, \theta^3)}{724867 - 8403814 \, \theta + 8914512 \, \theta^2 - 1173504 \, \theta^3},$\\
$p_ {01} = \frac{6 (-197691 + 841277 \, \theta + 1894240 \, \theta^2 - 2579200 \, \theta^3)}{724867 - 8403814 \, \theta + 8914512 \, \theta^2 - 1173504 \, \theta^3},$\\
$p_ {10} = \frac{2 (197691 - 2903972 \,\theta + 10299072 \,\theta^2 - 7737600 \,\theta^3)}{724867 - 8403814 \, \theta + 8914512 \, \theta^2 - 1173504 \, \theta^3},$\\
$p_ {11} = \frac{3 (593073 - 4077000 \, \theta - 636736 \, \theta^2 + 4265472 \, \theta^3)}{724867 - 8403814 \, \theta + 8914512 \, \theta^2 - 1173504 \, \theta^3}$\\

\noindent(interval 51)\quad $p_ {00} = \frac{25 (-1134497 + 30973420 \, \theta - 172413316 \, \theta^2 + 143212128 \, \theta^3)}{256 (23153 - 949486 \, \theta + 1190332 \, \theta^2 - 355104 \, \theta^3)},$\\
$p_ {01} = \frac{5 (40355679 - 669765812 \, \theta + 2713368508 \, \theta^2 - 2092066720 \, \theta^3)}{768 (23153 - 949486 \, \theta + 1190332 \, \theta^2 - 355104 \, \theta^3)},$\\
$p_ {10} = \frac{5 (14424319 - 413385108 \, \theta + 2224298236 \, \theta^2 - 1926737312 \, \theta^3)}{768 (23153 - 949486 \, \theta + 1190332 \, \theta^2 - 355104 \, \theta^3)},$\\
$p_ {11} = \frac{-171031211 +2363542852 \, \theta - 10843160044 \, \theta^2 + 9080390688 \, \theta^3}{768 (23153 - 949486 \, \theta + 1190332 \, \theta^2 - 355104 \, \theta^3)}$.\\

\noindent(interval 54)\quad $p_ {00} =\frac{25 (-3040989 + 8297500 \, \theta + 78170828 \, \theta^2 - 83959584 \, \theta^3)}{256 (62061 + 2369578 \, \theta - 4947956 \, \theta^2 + 2592352 \, \theta^3)},$\\
$p_ {01} = \frac{5 (36057441 - 181705388 \, \theta - 86345468 \, \theta^2 + 234249120 \, \theta^3)}{256 (62061 + 2369578 \, \theta - 4947956 \, \theta^2 + 2592352 \, \theta^3)},$\\
$p_ {10} = \frac{5 (12888001 - 8103372 \, \theta - 393409596 \, \theta^2 + 416833952 \, \theta^3)}{256 (62061 + 2369578 \, \theta - 4947956 \, \theta^2 + 2592352 \, \theta^3)},$\\
$p_ {11} = \frac{-152814869 + 1348218268 \, \theta - 822172116 \, \theta^2 - 492783648 \, \theta^3}{256 (62061 + 2369578 \, \theta - 4947956 \, \theta^2 + 2592352 \, \theta^3)}$.\\

\noindent(interval 56)\quad $p_ {00} = \frac{4 (-481 + 8919 \, \theta - 36924 \, \theta^2)}{11 (481 - 5002 \,\theta)}, \quad
p_ {01} = \frac{6 (-1443 + 3669 \, \theta + 24616 \, \theta^2)}{11 (481 - 5002 \,\theta)},$\\
$p_ {10} = \frac{2 (1443 - 22840 \,\theta + 73848 \,\theta^2)}{11 (481 - 5002 \,\theta)}, \quad p_ {11} = \frac{3 (4329 - 22344 \, \theta - 49232 \, \theta^2)}{11 (481 - 5002 \,\theta)}$.\\

\noindent(intervals 57--59)\quad $p_ {00} = \frac{3 (-192 - 36851 \,\theta + 233568 \,\theta^2)}{176 (1 - \theta) (9 + 280 \, \theta)}, \quad
p_ {01} = \frac{-12960 +668749 \, \theta - 3154464 \, \theta^2}{880 (1 - \theta) (9 + 280 \, \theta)}, $\\
$p_ {10} = \frac{864 +119225 \, \theta - 709664 \, \theta^2}{176 (1 - \theta) (9 + 280 \, \theta)}, \quad
p_ {11} = \frac{19440 - 473629 \, \theta + 2952864 \, \theta^2}{880 (1 -\theta) (9 + 280 \, \theta)}$.\\

\noindent(interval 60)\quad $p_ {00} = \frac{-288 - 21043 \, \theta + 139246 \, \theta^2}{88 (1 - \theta) (9 + 70 \, \theta)}, \quad
p_ {01} = \frac{-6480 + 159343 \, \theta - 652598 \, \theta^2}{440 (1 - \theta) (9 + 70 \, \theta)},$\\
$p_ {10} = \frac{432 + 22019 \, \theta - 140366 \, \theta^2}{88 (1 - \theta) (9 + 70 \, \theta)}, \quad p_ {11} = \frac{9720 - 137383 \, \theta + 627398 \, \theta^2}{440 (1 - \theta) (9 + 70 \, \theta)}$.\\

\noindent(interval 61)\quad $p_ {00} = \frac{2}{11}, \quad
p_ {01} = \frac{2 (-564 + 3077 \, \theta)}{385 (1 - 10 \, \theta)}, \quad
p_ {10} = 0, \quad p_ {11} = \frac{1443 - 9304 \, \theta}{385 (1 -10 \, \theta)}$.\\

\noindent(intervals 62--66)\quad $p_ {00} = \frac{2}{11}, \quad
p_ {01} = \frac{24 (-17 + 5 \, \theta)}{55 (1 - 232 \, \theta)},\quad
p_ {10} = 0, \quad
p_ {11} = \frac{3 (151 - 3520 \,\theta)}{55 (1 - 232 \, \theta)}$.\\

\noindent(interval 67)\quad $p_ {00} = \frac{3 (-84329 + 11724002 \, \theta - 52100240 \, \theta^2)}{5632 (1 - \theta) (27 + 1160 \, \theta)},$\\
$p_ {01} = \frac{9233463 - 185098334 \, \theta + 690292080 \, \theta^2}{28160 (1 - \theta) (27 + 1160 \, \theta)}, \quad p_ {10} = \frac{280635 - 34011814 \, \theta + 155112880 \, \theta^2}{5632 (1 - \theta) (27 + 1160 \, \theta)},$\\
$p_ {11} = \frac{-8611383 + 211202654 \, \theta - 717018480 \, \theta^2}{28160 (1 - \theta) (27 + 1160 \, \theta)}$.\\

\noindent(interval 68)\quad $p_ {00} = \frac{15 (10101 - 163438 \, \theta + 538968 \, \theta^2)}{88 (1443 - 3776 \, \theta + 3840 \, \theta^2)}, \quad
p_ {01} = \frac{3 (454545 - 872754 \, \theta - 5348720 \, \theta^2)}{176 (1443 - 3776 \, \theta + 3840 \, \theta^2)},$\\
$p_ {10} = \frac{-128427 + 2391154 \, \theta - 8023080 \, \theta^2}{88 (1443 - 3776 \, \theta + 3840 \, \theta^2)}, \quad
p_ {11} = \frac{3 (-385281 + 691506 \, \theta + 5533040 \, \theta^2)}{176 (1443 - 3776 \, \theta + 3840 \, \theta^2)}$.\\

\noindent(interval 69)\quad $p_ {00}=\frac{4 (-1591 + 29401\,\theta - 100668\,\theta^2)}{11 (1591 + 2220\,\theta - 2304\,\theta^2)},\quad
p_ {01} = \frac{6 (-4773 + 11835\,\theta + 66344\,\theta^2)}{11 (1591 + 2220\,\theta - 2304\,\theta^2)},$\\
$p_ {10} = \frac{2 (4773 - 56582 \,\theta + 199032 \,\theta^2)}{11 (1591 + 2220\,\theta - 2304\,\theta^2)}, \quad
p_ {11} = \frac{3 (14319 - 17010 \,\theta - 139600 \,\theta^2)}{11 (1591 + 2220\,\theta - 2304\,\theta^2)}$.\\

\noindent(intervals 70--73, 105--106)\quad $p_ {00} = \frac{4 (-151 +900 \, \theta - 2256 \, \theta^2)}{11 (151 -13656 \, \theta - 3072 \, \theta^2)},  $\\
$p_ {01} = \frac{18 (-151 - 1516 \, \theta + 160 \, \theta^2)}{11 (151- 13656 \, \theta - 3072 \, \theta^2)}, \quad p_ {10} = \frac{6 (151 - 5152 \, \theta + 480 \, \theta^2)}{11 (151 - 13656 \, \theta - 3072 \, \theta^2)},$\\
$p_ {11} = \frac{9 (453 - 10624 \, \theta - 3392 \, \theta^2)}{11 (151 -13656 \, \theta - 3072 \, \theta^2)}$.\\

\noindent(interval 74)\quad $p_ {00} = \frac{4 (-799 +13493 \, \theta - 41180 \, \theta^2)}{11 (799 - 4296 \, \theta - 1024 \, \theta^2)}, \quad
p_ {01} = \frac{6 (-2397 + 2127 \, \theta + 27112 \, \theta^2)}{11 (799 - 4296 \, \theta - 1024 \, \theta^2)},$\\
$p_ {10} = \frac{2 (2397 - 31282 \, \theta + 81336 \, \theta^2)}{11 (799 - 4296 \, \theta - 1024 \, \theta^2)}, \quad
p_ {11} = \frac{3 (7191- 17142 \, \theta -57296 \, \theta^2)}{11 (799 - 4296 \, \theta- 1024 \, \theta^2)}$.\\

\noindent(intervals 75--97)\quad $p_ {00} = \frac{4}{121}, \quad
p_ {01} = \frac{18}{121}, \quad
p_ {10} = \frac{18}{121}, \quad
p_ {11} = \frac{81}{121}$.\\

\noindent(intervals 98--101)\quad $p_ {00} = \frac{4 (-4643 + 72197 \, \theta - 140412 \, \theta^2)}{11 (4643 - 3136 \, \theta)}, \quad
p_ {01} = \frac{6 (-13929 - 6273 \, \theta + 93608 \, \theta^2)}{11 (4643 - 3136 \, \theta)},$\\
$p_ {10} = \frac{2 (13929 - 147530 \, \theta + 280824 \, \theta^2)}{11 (4643 - 3136 \, \theta)}, \quad
p_ {11} = \frac{3 (41787 +3138 \, \theta - 187216 \, \theta^2)}{11 (4643 - 3136 \, \theta)}$.\\

\noindent(interval 102)\quad $p_ {00} = 0, \quad
p_ {01} = \frac{2 (-4643 + 11381 \, \theta)}{55 (85 -64 \, \theta)},\quad
p_ {10} = \frac{2}{11}, \quad
p_ {11} = \frac{13111 - 25642 \, \theta}{55 (85 - 64 \, \theta)}$.\\

\noindent(interval 103)\quad $p_ {00} = 0, \quad
p_ {01} = \frac{-24 \, \theta}{11 (1 - 12 \, \theta)},$ \quad
$p_ {10} = \frac{2}{11}, \quad
p_ {11} = \frac{3 (3 -28 \, \theta)}{11 (1 - 12 \, \theta)}$.\\

\noindent(interval 104)\quad $p_ {00} = \frac{5 (16779 - 231714 \, \theta + 433000 \, \theta^2)}{88 (799 + 196 \, \theta + 512 \, \theta^2)}, \quad
p_ {01} = \frac{3 (251685 + 113398 \, \theta - 1437872 \, \theta^2)}{176 (799 + 196 \, \theta + 512 \, \theta^2)},$\\
$p_ {10} = \frac{-71111 + 1161706 \, \theta - 2156808 \, \theta^2}{88 (799 + 196 \, \theta + 512 \, \theta^2)}, \quad
p_ {11} = \frac{3 (-213333 - 103990 \, \theta + 1462448 \, \theta^2)}{176 (799 + 196 \, \theta + 512 \, \theta^2)}$.\\

\noindent(interval 107)\quad $p_ {00} = 0, \quad
p_ {01} = \frac{2 (-151 - 912 \, \theta + 1200 \, \theta^2)}{151 - 14100 \, \theta + 15456 \, \theta^2}, \quad
p_ {10} = \frac{-2400 \,\theta(1 - \theta)}{151 - 14100 \, \theta + 15456 \, \theta^2},$\\
$p_ {11} = \frac{3 (151 - 3292 \, \theta + 3552 \, \theta^2)}{151 - 14100 \, \theta + 15456 \, \theta^2}$.\\

\noindent(intervals 108--110)\quad $p_ {00} = 0, \quad
p_ {01} = \frac{10 (-3171 - 17332 \, \theta + 20640 \, \theta^2)}{3 (901 - 443072 \, \theta + 443072 \, \theta^2)},$\\
$p_ {10} = \frac{10 (137 - 23948 \, \theta + 20640 \, \theta^2)}{3 (901 - 443072 \, \theta + 443072 \, \theta^2)}, \quad p_ {11} = \frac{33043 - 916416 \, \theta + 916416 \, \theta^2}{3 (901 - 443072 \, \theta + 443072 \, \theta^2)}$.\\
\end{small}\medskip

\begin{small}
The value of the game to the Players (or minus the value to Banker) as a function of $\theta$ (listed by interval number), assuming the correlated cooperative equilibrium.\medskip

\noindent1. $-16 (38351717391-1819286647 \,\theta-144331339654 \,\theta^2+100038236832 \,\theta^3)$\\
\hglue6mm${}/[10604499373 (4521+6014\,\theta-8944 \,\theta^2)]$\\
2. $-16 (38351717391-25482817681 \,\theta-85919346208 \,\theta^2+62743532544 \,\theta^3)$\\
\hglue6mm${}/[10604499373 (4521+3224\,\theta-5632 \,\theta^2)]$\\
3. $-16 (38346193962-25169828865 \,\theta-85661110112 \,\theta^2+62368599040 \,\theta^3)$\\
\hglue6mm${}/[10604499373 (4521+3224\,\theta-5632 \,\theta^2)]$\\
4. $-16 (38307467898-23056048833 \,\theta-84386812256 \,\theta^2+60283700224 \,\theta^3)$\\
\hglue6mm${}/[10604499373 (4521+3224\,\theta-5632 \,\theta^2)]$\\
5. $-16 (38301944469-22767983879 \,\theta-84166221360 \,\theta^2+59955703808 \,\theta^3)$\\
\hglue6mm${}/[10604499373 (4521+3224\,\theta-5632 \,\theta^2)]$\\
6. $-16 (38258602053-20544725543 \,\theta-82915076400 \,\theta^2+57789095936 \,\theta^3)$\\
\hglue6mm${}/[10604499373 (4521+3224\,\theta-5632 \,\theta^2)]$\\
7. $-16 (38247555195-19985211543 \,\theta-82438841648 \,\theta^2+57104244736 \,\theta^3)$\\
\hglue6mm${}/[10604499373 (4521+3224\,\theta-5632 \,\theta^2)]$\\
8. $-16 (38242031766-19754379161 \,\theta-82251228832 \,\theta^2+56830563328 \,\theta^3)$\\
\hglue6mm${}/[10604499373 (4521+3224\,\theta-5632 \,\theta^2)]$\\
9. $-(187382777855+392988314038 \,\theta-657014886600 \,\theta^2+213280439808 \,\theta^3)$\\
\hglue6mm${}/[21208998746 (692+2327 \,\theta-72\,\theta^2)]$\\
10. $-(187353514655+394256231638 \,\theta-659948157960 \,\theta^2+213827939328 \,\theta^3)$\\
\hglue6mm${}/[21208998746 (692+2327 \,\theta-72\,\theta^2)]$\\
11. $-16 (38231643330+4236073425 \,\theta-137929212146 \,\theta^2+90435219328 \,\theta^3)$\\
\hglue6mm${}/[10604499373 (4521+6014\,\theta-8944 \,\theta^2)]$\\
12. $-(37531769635-15573605854 \,\theta-775718120240 \,\theta^2+584810541056 \,\theta^3)$\\
\hglue6mm${}/[21208998746 (139+89 \,\theta-2344\,\theta^2)]$\\
13. $-(412004693345-27232465158 \,\theta-569003258104 \,\theta^2+355637709824 \,\theta^3)$\\
\hglue6mm${}/[21208998746 (1523+1752 \,\theta-496\,\theta^2)]$\\
14. $-16 (38206001451+5201757659 \,\theta-136458232122 \,\theta^2+88485633088 \,\theta^3)$\\
\hglue6mm${}/[10604499373 (4521+6014\,\theta-8944 \,\theta^2)]$\\
15. $-16 (38183907735+5954175099 \,\theta-134975025770 \,\theta^2+86628697024 \,\theta^3)$\\
\hglue6mm${}/[10604499373 (4521+6014\,\theta-8944 \,\theta^2)]$\\
16. $-16 (38177725884+6144969253 \,\theta-134661224482 \,\theta^2+86219431936 \,\theta^3)$\\
\hglue6mm${}/[10604499373 (4521+6014\,\theta-8944 \,\theta^2)]$\\
17. $-16 (38172202455+6308149751 \,\theta-134368573602 \,\theta^2+85846502272 \,\theta^3)$\\
\hglue6mm${}/[10604499373 (4521+6014\,\theta-8944 \,\theta^2)]$\\
18. $-48 (12722226342+2151879149 \,\theta-44713793770 \,\theta^2+28515771040 \,\theta^3)$\\
\hglue6mm${}/[10604499373 (4521+6014\,\theta-8944 \,\theta^2)]$\\
19. $-16 (38161814019+6583819441 \,\theta-133982425830 \,\theta^2+85327161248 \,\theta^3)$\\
\hglue6mm${}/[10604499373 (4521+6014\,\theta-8944 \,\theta^2)]$\\
20. $-16 (38156290590+6722999183 \,\theta-133781283774 \,\theta^2+85058406880 \,\theta^3)$\\
\hglue6mm${}/[10604499373 (4521+6014\,\theta-8944 \,\theta^2)]$\\
21. $-16 (38117564526+7669478639 \,\theta-133597538238 \,\theta^2+84374942176 \,\theta^3)$\\
\hglue6mm${}/[10604499373 (4521+6014\,\theta-8944 \,\theta^2)]$\\
22. $-48 (12703794225+2603782725 \,\theta-44460086614 \,\theta^2+28029340032 \,\theta^3)$\\
\hglue6mm${}/[10604499373 (4521+6014\,\theta-8944 \,\theta^2)]$\\
23. $-16 (38106517668+7914606307 \,\theta-133153181790 \,\theta^2+83812899296 \,\theta^3)$\\
\hglue6mm${}/[10604499373 (4521+6014\,\theta-8944 \,\theta^2)]$\\
24. $-16 (38100994239+8020554233 \,\theta-132909967398 \,\theta^2+83519610784 \,\theta^3)$\\
\hglue6mm${}/[10604499373 (4521+6014\,\theta-8944 \,\theta^2)]$\\
25. $-48 (12698490270+2706064735 \,\theta-44230934414 \,\theta^2+27752252352 \,\theta^3)$\\
\hglue6mm${}/[10604499373 (4521+6014\,\theta-8944 \,\theta^2)]$\\
26. $-16 (38089947381+8200141375 \,\theta-132541097674 \,\theta^2+83067643840 \,\theta^3)$\\
\hglue6mm${}/[10604499373 (4521+6014\,\theta-8944 \,\theta^2)]$\\
27. $-16 (38089947381+8198260639 \,\theta-132509414282 \,\theta^2+83037179328 \,\theta^3)$\\
\hglue6mm${}/[10604499373 (4521+6014\,\theta-8944 \,\theta^2)]$\\
28. $-16 (38083765530+8281974497 \,\theta-132328214258 \,\theta^2+82817027456 \,\theta^3)$\\
\hglue6mm${}/[10604499373 (4521+6014\,\theta-8944 \,\theta^2)]$\\
29. $-48 (12692966841+2781538655 \,\theta-44081581834 \,\theta^2+27568983840 \,\theta^3)$\\
\hglue6mm${}/[10604499373 (4521+6014\,\theta-8944 \,\theta^2)]$\\
30. $-16 (38054941278+8614852225 \,\theta-131791208962 \,\theta^2+82139008000 \,\theta^3)$\\
\hglue6mm${}/[10604499373 (4521+6014\,\theta-8944 \,\theta^2)]$\\
31. $-16 (38030982033+8876780531 \,\theta-131290586018 \,\theta^2+81528362624 \,\theta^3)$\\
\hglue6mm${}/[10604499373 (4521+6014\,\theta-8944 \,\theta^2)]$\\
32. $-48 (12675152868+2977934613 \,\theta-43739010386 \,\theta^2+27143517920 \,\theta^3)$\\
\hglue6mm${}/[10604499373 (4521+6014\,\theta-8944 \,\theta^2)]$\\
33. $-16 (38019276753+8993516941 \,\theta-131127339958 \,\theta^2+81314577184 \,\theta^3)$\\
\hglue6mm${}/[10604499373 (4521+6014\,\theta-8944 \,\theta^2)]$\\
34. $-16 (37995317508+9198212675 \,\theta-130749290142 \,\theta^2+80856709600 \,\theta^3)$\\
\hglue6mm${}/[10604499373 (4521+6014\,\theta-8944 \,\theta^2)]$\\
35. $-16 (37899480528+9917300163 \,\theta-128964600590 \,\theta^2+78805363552 \,\theta^3)$\\
\hglue6mm${}/[10604499373 (4521+6014\,\theta-8944 \,\theta^2)]$\\
36. $-16 (37875521283+10105379989 \,\theta-128638651246 \,\theta^2+78408365536 \,\theta^3)$\\
\hglue6mm${}/[10604499373 (4521+6014\,\theta-8944 \,\theta^2)]$\\
37. $-16 (37779684303+10824467477 \,\theta-127146508254 \,\theta^2+76649566048 \,\theta^3)$\\
\hglue6mm${}/[10604499373 (4521+6014\,\theta-8944 \,\theta^2)]$\\
38. $-16 (37757590587+10952865261 \,\theta-126579798526 \,\theta^2+76038454880 \,\theta^3)$\\
\hglue6mm${}/[10604499373 (4521+6014\,\theta-8944 \,\theta^2)]$\\
39. $-16 (37733631342+11092020469 \,\theta-126277235434 \,\theta^2+75690663232 \,\theta^3)$\\
\hglue6mm${}/[10604499373 (4521+6014\,\theta-8944 \,\theta^2)]$\\
40. $-16 (3986587163-50089552645 \,\theta+71238575350 \,\theta^2-27184176960 \,\theta^3)$\\
\hglue6mm${}/[10604499373 (481-5570\,\theta+3056 \,\theta^2)]$\\
41. $-16 (3986653060-50070045283 \,\theta+71085090302 \,\theta^2-27084759168 \,\theta^3)$\\
\hglue6mm${}/[10604499373 (481-5570\,\theta+3056 \,\theta^2)]$\\
42. $-16 (4000633325-49261121250 \,\theta+83098340592 \,\theta^2-39112507584 \,\theta^3)$\\
\hglue6mm${}/[10604499373 (481-5454\,\theta+4584 \,\theta^2)]$\\
43. $-32 (2000151920-24620476825 \,\theta+41477367966 \,\theta^2-19487587992 \,\theta^3)$\\
\hglue6mm${}/[10604499373 (481-5454\,\theta+4584 \,\theta^2)]$\\ 
44. $-(138779541091-180227257793 \,\theta+67215533982 \,\theta^2)/[3732783779296 (3-\,\theta)]$\\
45. $-32 (155846869069-1946669313673 \,\theta+3420246980781 \,\theta^2-1624641679152 \,\theta^3)$\\
\hglue6mm${}/[31813498119 (12451-143846\,\theta+133968 \,\theta^2)]$\\
46. $-16 (311778825413-3893700862096 \,\theta+6839063336762 \,\theta^2-3247562279904 \,\theta^3)$\\
\hglue6mm${}/[31813498119 (12451-143846\,\theta+133968 \,\theta^2)]$\\
47. $-(33993402201005-689235951232512 \,\theta+1456513482893394 \,\theta^2$\\
\hglue6mm${}-1308473239136408 \,\theta^3 +490002654108096 \,\theta^4)/[31813498119(89735-1705504 \,\theta$\\
\hglue6mm${}+2143084 \,\theta^2-760352 \,\theta^3)]$\\
48. $-(8770816751099+288640723789268 \,\theta-976291100477746 \,\theta^2$\\
\hglue6mm${}+1078180277518752 \,\theta^3-397430091205248 \,\theta^4)/[10604499373(69459+2238452 \,\theta$\\
\hglue6mm${}-5258404 \,\theta^2+3026528 \,\theta^3)]$\\
49. $-16 (43893867121-453580147988 \,\theta+651507327262 \,\theta^2-249551002512 \,\theta^3)$\\
\hglue6mm${}/[116649493103 (481-4514\,\theta+2800 \,\theta^2)]$\\
50. $-16 (6013459795577-75287766753692 \,\theta+140463071223342 \,\theta^2-82499629362720 \,\theta^3$\\
\hglue6mm${}+11325699600384 \,\theta^4)/[10604499373(724867-8403814 \,\theta+8914512 \,\theta^2-1173504 \,\theta^3)]$\\
51. $-(8770816751099-378080880576242 \,\theta+815969259822614 \,\theta^2-667951000381788 \,\theta^3$\\
\hglue6mm${}+214567660924192 \,\theta^4)/[31813498119(23153-949486 \,\theta+1190332 \,\theta^2-355104 \,\theta^3)]$\\
52. $-32 (21941365264-226656470836 \,\theta+325074509485 \,\theta^2-124377848344 \,\theta^3)$\\
\hglue6mm${}/[116649493103 (481-4514\,\theta+2800 \,\theta^2)]$\\
53. $-32 (3005967041168-37624249835880 \,\theta+70122663500069 \,\theta^2-41148951145056 \,\theta^3$\\
\hglue6mm${}+5652930035712 \,\theta^4)/[10604499373(724867-8403814 \,\theta+8914512 \,\theta^2-1173504 \,\theta^3)]$\\
54. $-(7836646919621+306358408375442 \,\theta-950316659171094 \,\theta^2+981992427224988 \,\theta^3$\\
\hglue6mm${}-344000196972832 \,\theta^4)/[10604499373(62061+2369578 \,\theta-4947956 \,\theta^2+2592352 \,\theta^3)]$\\
55. $-16 (43892680975-453351873116 \,\theta+649999468758 \,\theta^2-248662552784 \,\theta^3)$\\
\hglue6mm${}/[116649493103 (481-4514\,\theta+2800 \,\theta^2)]$\\
56. $-16 (43892680975-496774406644 \,\theta+434696771870 \,\theta^2-14821588992 \,\theta^3)$\\
\hglue6mm${}/[116649493103 (481-5002\,\theta)]$\\
57. $-16 (4106383875+117669499647 \,\theta-241040766397 \,\theta^2+144247786600 \,\theta^3)$\\
\hglue6mm${}/[583247465515 (1-\theta)(9+280 \,\theta)]$\\
58. $-16 (4105452960+117617182712 \,\theta-240627631107 \,\theta^2+143887899160 \,\theta^3)$\\
\hglue6mm${}/[583247465515 (1-\theta)(9+280 \,\theta)]$\\
59. $-16 (4104411075+117525878997 \,\theta-239952017347 \,\theta^2+143304631000 \,\theta^3)$\\
\hglue6mm${}/[583247465515 (1-\theta)(9+280 \,\theta)]$\\
60. $-16 (4104411075+23039957103 \,\theta-56052048598 \,\theta^2+38900841910 \,\theta^3)$\\
\hglue6mm${}/[583247465515 (1-\theta)(9+70 \,\theta)]$\\ 
61. $-16 (3211590381-34836061778 \,\theta+29378412470 \,\theta^2)/[4082732258605 (1-10 \,\theta)]$\\
62. $-16 (485269611-103072262120 \,\theta+84236525960 \,\theta^2)/[583247465515 (1-232 \,\theta)]$\\ 
63. $-16 (495852651-102358150280\,\theta+79832883080 \,\theta^2)/[583247465515 (1-232 \,\theta)]$\\
64. $-32 (247617508-51142256085 \,\theta+39737548940 \,\theta^2)/[583247465515 (1-232 \,\theta)]$\\ 
65. $-16 (495131581-102260001920\,\theta+79356005320 \,\theta^2)/[583247465515 (1-232 \,\theta)]$\\
66. $-96 (82418991-17030603810 \,\theta+13166266940 \,\theta^2)/[583247465515 (1-232 \,\theta)]$\\ 
67. $-(311880897984+15811173883543\,\theta-26679094516830 \,\theta^2+5412559516400 \,\theta^3)$\\
\hglue6mm${}/[1166494931030 (1-\theta)(27+1160 \,\theta)]$\\
68. $-(1791574243056-4705817549217 \,\theta+4165762914218 \,\theta^2-2898522685440 \,\theta^3)$\\
\hglue6mm${}/[116649493103 (1443-3776 \,\theta+3840\,\theta^2)]$\\
69. $-32 (71768589237+19533928574 \,\theta-109166266375 \,\theta^2+61438170624 \,\theta^3)$\\
\hglue6mm${}/[116649493103 (1591+2220\,\theta-2304 \,\theta^2)]$\\
70. $-96 (2270491719-198406986650 \,\theta+97360820404 \,\theta^2+31675226112 \,\theta^3)$\\
\hglue6mm${}/[116649493103 (151-13656\,\theta-3072 \,\theta^2)]$\\
71. $-16 (13590037297-1188123266214 \,\theta+575293938312 \,\theta^2+187926018048 \,\theta^3)$\\
\hglue6mm${}/[116649493103 (151-13656\,\theta-3072 \,\theta^2)]$\\
72. $-16 (13470631933-1179543531806 \,\theta+542973152328 \,\theta^2+180355946496 \,\theta^3)$\\
\hglue6mm${}/[116649493103 (151-13656\,\theta-3072 \,\theta^2)]$\\
73. $-32 (6728540974-589282372355 \,\theta+269645984484 \,\theta^2+89747966976 \,\theta^3)$\\
\hglue6mm${}/[116649493103 (151-13656\,\theta-3072 \,\theta^2)]$\\
74. $-32 (35603339326-218273522859 \,\theta+104761790337 \,\theta^2+26521330688 \,\theta^3)$\\
\hglue6mm${}/[116649493103 (799-4296\,\theta-1024 \,\theta^2)]$\\ 
75. $-32 (464056790-298455787 \,\theta)/1283144424133$\\
76. $-16 (926326887-589910792 \,\theta)/1283144424133$\\ 
77. $-32 (462522772-292685813 \,\theta)/1283144424133$\\ 
78. $-32 (456494932-271475013\,\theta)/1283144424133$\\ 
79. $-16 (910783169-535383252 \,\theta)/1283144424133$\\
80. $-16 (907840909-525959036 \,\theta)/1283144424133$\\ 
81. $-32 (453299997-261013145 \,\theta)/1283144424133$\\ 
82. $-16 (903908163-513554378\,\theta)/1283144424133$\\ 
83. $-64 (225656705-127395301 \,\theta)/1283144424133$\\
84. $-64 (224036216-122516989 \,\theta)/1283144424133$\\ 
85. $-16 (882685709-450538356 \,\theta)/1283144424133$\\ 
86. $-16 (879112323-440449324\,\theta)/1283144424133$\\ 
87. $-32 (439188379-219216208 \,\theta)/1283144424133$\\
88. $-32 (429980635-194825504 \,\theta)/1283144424133$\\ 
89. $-32 (426739657-186241292 \,\theta)/1283144424133$\\ 
90. $-112 (121541069-52211686\,\theta)/1283144424133$\\ 
91. $-16 (837756191-331594154 \,\theta)/1283144424133$\\
92. $-16 (818601887-282073930 \,\theta)/1283144424133$\\ 
93. $-128 (101988757-34404919 \,\theta)/1283144424133$\\ 
94. $-16 (814123363-270760828\,\theta)/1283144424133$\\ 
95. $-1632 (7964085-2610809 \,\theta)/1283144424133$\\
96. $-80 (162219151-52654982 \,\theta)/1283144424133$\\ 
97. $-64 (201959483-63927034 \,\theta)/1283144424133$\\
98. $-32 (186882616486-253015679184 \,\theta+144088046097 \,\theta^2-3019419648 \,\theta^3)$\\
\hglue6mm${}/[116649493103 (4643-3136\,\theta)]$\\
99. $-32 (185476855376-252605631128 \,\theta+150137699473 \,\theta^2-1869164544 \,\theta^3)$\\
\hglue6mm${}/[116649493103 (4643-3136\,\theta)]$\\
100. $-16 (369108410761-500689757702 \,\theta+298969002402 \,\theta^2-2300510208 \,\theta^3)$\\
\hglue6mm${}/[116649493103 (4643-3136\,\theta)]$\\
101. $-16 (369108410761-529665954406 \,\theta+358753324194 \,\theta^2+12652806144 \,\theta^3)$\\
\hglue6mm${}/[116649493103 (4643-3136\,\theta)]$\\ 
102. $-16 (33801094747-51304659314 \,\theta+34947794470 \,\theta^2)/[583247465515 (85-64 \,\theta)]$\\
103. $-48 (21823585-258373988 \,\theta+7942888 \,\theta^2)/[116649493103 (1-12 \,\theta)]$\\ 
104. $-(248577529200+2688091349739\,\theta-2194172765150 \,\theta^2-113813274624 \,\theta^3)$\\
\hglue6mm${}/[116649493103 (799+196 \,\theta+512 \,\theta^2)]$\\
105. $-32 (5977292569-440436505944 \,\theta-85878929748 \,\theta^2+2324975616 \,\theta^3)$\\
\hglue6mm${}/[116649493103 (151-13656\,\theta-3072 \,\theta^2)]$\\
106. $-16 (11904626033-877406359116 \,\theta-178433064456 \,\theta^2+2998923264 \,\theta^3)$\\
\hglue6mm${}/[116649493103 (151-13656\,\theta-3072 \,\theta^2)]$\\
107. $-16 (1068219149-82481312482 \,\theta+91562829360 \,\theta^2-1378780800 \,\theta^3)$\\
\hglue6mm${}/[10604499373 (151-14100\,\theta+15456 \,\theta^2)]$\\
108. $-80 (7333327695-1557446902148 \,\theta+1581024008528 \,\theta^2-23715029760 \,\theta^3)$\\
\hglue6mm${}/[31813498119 (901-443072\,\theta+443072 \,\theta^2)]$\\
109. $-320 (1824776445-387874428487 \,\theta+390822719812 \,\theta^2-2965596480 \,\theta^3)$\\
\hglue6mm${}/[31813498119 (901-443072\,\theta+443072 \,\theta^2)]$\\
110. $-80 (2421541645-515181045616 \,\theta+515181045616 \,\theta^2)$\\
\hglue6mm${}/[10604499373 (901-443072 \,\theta+443072 \,\theta^2)]$
\end{small}\bigskip

\newpage

\section*{Appendix B.  Independent cooperative equilibrium}

\begin{small}
We list the 130 points of discontinuity in $(0,1/2)$ of Banker's best response to the Players' maximin strategy in the independent cooperative equilibrium.  All are roots of polynomials of degree 13 or less.\medskip

\noindent$\theta_1\approx0.0172597$ (root of $1196071 - 70128084\,\theta + 48076640\,\theta^2$)\\
$\theta_2\approx0.0182052$ (root of $13439 - 747528\,\theta + 512560\,\theta^2$)\\
$\theta_3\approx0.0187178$ (root of $1196071 - 64730946\,\theta + 44388440\,\theta^2$)\\
$\theta_4\approx0.0192602$ (root of $1196071 - 62931900\,\theta + 43159040\,\theta^2$)\\
$\theta_5\approx0.0194839$ (root of $15041 - 782610\,\theta + 546040\,\theta^2$)\\
$\theta_6\approx0.0232232$ (root of $1196071 - 52337518\,\theta + 35919240\,\theta^2$)\\
$\theta_7\approx0.0240640$ (root of $1196071 - 50538472\,\theta + 34689840\,\theta^2$)\\
$\theta_8\approx0.0244412$ (root of $150499 - 6259726\,\theta + 4178760\,\theta^2$)\\
$\theta_9\approx0.0245604$ (root of $1338649 - 55459816\,\theta + 38898960\,\theta^2$)\\
$\theta_{10}\approx0.0255056$ (root of $1053493 - 42019036\,\theta + 28021920\,\theta^2$)\\
$\theta_{11}\approx0.0269967$ (root of $1196071 - 45141334\,\theta + 31001640\,\theta^2$)\\
$\theta_{12}\approx0.0273040$ (root of $1196071 - 44720186\,\theta + 33494208\,\theta^2$)\\
$\theta_{13}\approx0.0289972$ (root of $1338649 - 47203820\,\theta + 35835264\,\theta^2$)\\
$\theta_{14}\approx0.0311422$ (root of $1196071 - 39323048\,\theta + 29421312\,\theta^2$)\\
$\theta_{15}\approx0.0341664$ (root of $1196071 - 35924850\,\theta + 26856896\,\theta^2$)\\
$\theta_{16}\approx0.0360182$ (root of $1196071 - 34125804\,\theta + 25499264\,\theta^2$)\\
$\theta_{17}\approx0.0362616$ (root of $1053493 - 29843124\,\theta + 21800576\,\theta^2$)\\
$\theta_{18}\approx0.0376433$ (root of $1338649 - 36609438\,\theta + 27840320\,\theta^2$)\\
$\theta_{19}\approx0.0403979$ (root of $13439 - 343008\,\theta + 256000\,\theta^2$)\\
$\theta_{20}\approx0.0425911$ (root of $7097 - 172107\,\theta + 128568\,\theta^2$)\\
$\theta_{21}\approx0.0431219$ (root of $1697705233 - 179649339338\,\theta + 3456239837408\,\theta^2$\\
\hglue1cm${} - 4784841897088\,\theta^3 + 1712192778240\,\theta^4$)\\
$\theta_{22}\approx0.0433678$ (root of $399 - 9500\,\theta + 6909\,\theta^2$)\\
$\theta_{23}\approx0.0440036$ (root of $13037615396176416 - 320553761853928046\,\theta$\\
\hglue1cm${} + 520229303579284217\,\theta^2+ 785092514491791916\,\theta^3 - 1716296986200855713\,\theta^4 $\\
\hglue1cm${} + 733490688027658450\,\theta^5$)\\
$\theta_{24}\approx0.0449134$ (root of $453 - 10436\,\theta + 7791\,\theta^2$)\\
$\theta_{25}\approx0.0459910$ (root of $1196071 - 26929620\,\theta + 20068736\,\theta^2$)\\
$\theta_{26}\approx0.0494121$ (root of $1196071 - 25130574\,\theta + 18711104\,\theta^2$)\\
$\theta_{27}\approx0.0569262$ (root of $37906880 - 777815991\,\theta + 2034322957\,\theta^2 - 1198987872\,\theta^3$)\\
$\theta_{28}\approx0.0577965$ (root of $151 - 2730\,\theta + 2031\,\theta^2$)\\
$\theta_{29}\approx0.0581390$ (root of $169 - 3042\,\theta + 2325\,\theta^2$)\\
$\theta_{30}\approx0.0629881$ (root of $280721548800000 + 292930382236800\,\theta $\\
\hglue1cm${} - 74821515301320409\,\theta^2-79544313812216314\,\theta^3 + 1223456960296730959\,\theta^4$\\
\hglue1cm${} - 1756896199569915776\,\theta^5 + 697311824652591360\,\theta^6$)\\
$\theta_{31}\approx0.0799072$ (root of $528987825 - 33799834000\,\theta + 363014976832\,\theta^2$\\
\hglue1cm${} - 287912894464\,\theta^3 + 20926218240\,\theta^4$)\\
$\theta_{32}\approx0.0830993$ (root of $7053171 - 1027543564\,\theta + 12119103820\,\theta^2- 9367547392\,\theta^3$\\
\hglue1cm${}  + 462910464\,\theta^4$)\\
$\theta_{33}\approx0.0840033$ (root of $28113633 + 9276709160\,\theta - 122239114352\,\theta^2+ 93270978560\,\theta^3$\\
\hglue1cm${}  - 1804308480\,\theta^4$)\\
$\theta_{34}\approx0.0842924$ (root of $6934 - 88199\,\theta + 70442\,\theta^2$)\\
$\theta_{35}\approx0.0894740$ (root of $18849764 - 225910055\,\theta + 170294834\,\theta^2$)\\
$\theta_{36}\approx0.0939539$ (root of $11738571 - 3303674192\,\theta + 36713503424\,\theta^2 -  30864293888\,\theta^3$\\
\hglue1cm${} + 2177187840\,\theta^4$)\\
$\theta_{37}\approx0.0991012$ (root of $176329275 + 7376528780\,\theta - 99541141972\,\theta^2 + 72124260864\,\theta^3$\\
\hglue1cm${} + 511349760\,\theta^4$)\\
$\theta_{38}\approx0.0992228$ (root of $342436 - 3712083\,\theta + 2629436\,\theta^2$)\\
$\theta_{39}\approx0.100952$ (root of $2767375 - 29557970\,\theta + 21249219\,\theta^2$)\\
$\theta_{40}\approx0.103087$ (root of $7609292 - 79982713\,\theta + 59838606\,\theta^2$)\\
$\theta_{41}\approx0.105963$ (root of $39757085 - 406643392\,\theta + 296755596\,\theta^2$)\\
$\theta_{42}\approx0.106917$ (root of $135927651 - 1379098100\,\theta + 1007903500\,\theta^2$)\\
$\theta_{43}\approx0.107512$ (root of $3490495 - 35169356\,\theta + 25143276\,\theta^2$)\\
$\theta_{44}\approx0.109362$ (root of $89621179 - 891209305\,\theta + 655761490\,\theta^2$)\\
$\theta_{45}\approx0.112297$ (root of $9720495 - 94305370\,\theta + 68964683\,\theta^2$)\\
$\theta_{46}\approx0.112497$ (root of $1203144810881164339109220 - 48404500533224883973436836\,\theta$\\
\hglue1cm${} + 217206087418384854871702492\,\theta^2 + 2737716198663238142118733236\,\theta^3$\\
\hglue1cm${} - 1704798984054721412226067967\,\theta^4 - 124726766579831129650984558129\,\theta^5$\\
\hglue1cm${} + 24642895302856208693137779264\,\theta^6 + 323782264888609988784356977916\,\theta^7$\\
\hglue1cm${} - 313067602354574224207541990912\,\theta^8 + 81909086956457405877690716160\,\theta^9$)\\
$\theta_{47}\approx0.113072$ (root of $867789312 - 142483182448\,\theta + 999401311661\,\theta^2$\\
\hglue1cm${} + 1809875037880\,\theta^3- 923533355696\,\theta^4$)\\
$\theta_{48}\approx0.113700$ (root of $249777662135923802400 - 2330012213168443332880\,\theta $\\
\hglue1cm${} - 778322088674843368431\,\theta^2 + 16819673068522365489019\,\theta^3$\\
\hglue1cm${} + 4688579092023614656568\,\theta^4- 16780160518394139755028\,\theta^5$\\
\hglue1cm${} + 9461677057640338596256\,\theta^6 - 2264669144525844514368\,\theta^7$)\\
$\theta_{49}\approx0.114111$ (root of $2407125 - 23211335\,\theta + 18550286\,\theta^2$)\\
$\theta_{50}\approx0.114203$ (root of $6948788000 - 76666790013\,\theta + 147785705330\,\theta^2$\\
\hglue1cm${} - 81031247077\,\theta^3$)\\
$\theta_{51}\approx0.116342$ (root of $31878574183465640992163781929041$\\
\hglue1cm${} + 1669944974294610524788838645431620\,\theta$\\
\hglue1cm${} - 67530347158686182373798647100054978\,\theta^2$\\
\hglue1cm${} + 38177974301164442216941931951267116\,\theta^3$\\
\hglue1cm${} + 7836919606291076454356455180144690449\,\theta^4$\\
\hglue1cm${} - 17021612879852750311143191669458133976\,\theta^5$\\
\hglue1cm${} - 222165609541195429736784131296049150376\,\theta^6$\\
\hglue1cm${} + 221372997571832024084486940273881214432\,\theta^7$\\
\hglue1cm${} + 1369274491179181866442106236836520735632\,\theta^8$\\
\hglue1cm${} - 983091590295807945310282253520667561472\,\theta^9$\\
\hglue1cm${} - 622364322007457479455410707968203022336\,\theta^{10}$\\
\hglue1cm${}  + 280798074725443967846764626985292464128\,\theta^{11}$\\
\hglue1cm${} + 109147049291319385597103181202584764416\,\theta^{12}$)\\
$\theta_{52}\approx0.116377$ (root of $101880794910234933565921 - 7788787645490499989587116\,\theta$\\
\hglue1cm${} + 208257615405383055853101059\,\theta^2 - 3376861941757007607789768168\,\theta^3$\\
\hglue1cm${} + 24013663696725662357989680215\,\theta^4 + 20952663850106367747714734708\,\theta^5$\\
\hglue1cm${} - 811036808713452381113545783575\,\theta^6 + 923619557495752957614773322200\,\theta^7$\\
\hglue1cm${} + 6493253708045298491062971879240\,\theta^8$\\
\hglue1cm${} - 3775810143130181144728177847480\,\theta^9$\\
\hglue1cm${} - 3366137091487634221868815703980\,\theta^{10}$\\
\hglue1cm${} + 1221509511981970863677153710256\,\theta^{11}$\\
\hglue1cm${} + 395553778602382026493910482576\,\theta^{12}$)\\
$\theta_{53}\approx0.116645$ (root of $19773845 - 1380111260\,\theta + 7824344475\,\theta^2 +  23353862962\,\theta^3 $\\
\hglue1cm${} - 12500510326\,\theta^4$)\\
$\theta_{54}\approx0.117024$ (root of $19773845 - 1378268060\,\theta + 7770645915\,\theta^2 + 23359392562\,\theta^3 $\\
\hglue1cm${} - 12454184566\,\theta^4$)\\
$\theta_{55}\approx0.119390$ (root of $18752428782800868766627489245150625$\\
\hglue1cm${} - 483813925550481919691720268616919500\,\theta$\\
\hglue1cm${} - 1027025755915511968203463407272278850\,\theta^2$\\
\hglue1cm${} + 77226566286031539424130317242727073420\,\theta^3$\\
\hglue1cm${} - 379450456019531010523709830056727811999\,\theta^4$\\
\hglue1cm${} - 247011381562756434020077521640337029288\,\theta^5$\\
\hglue1cm${} + 2056689648424333633548344472714350544952\,\theta^6$\\
\hglue1cm${} - 1836711615929676877612957506562024135840\,\theta^7$\\
\hglue1cm${} - 269029282628576913365925509498032229744\,\theta^8$\\
\hglue1cm${} + 674642330808747981042734705177986909440\,\theta^9$\\
\hglue1cm${} + 26011425693973073002002243759976386048\,\theta^{10}$\\
\hglue1cm${} - 86776671632310773471020896545001836544\,\theta^{11}$\\
\hglue1cm${} - 14059780820025475772443417499679289344\,\theta^{12}$)\\
$\theta_{56}\approx0.119622$ (root of $41629610355987300400 + 1716774076566150210960\,\theta$\\
\hglue1cm${} - 16363792955265443808499\,\theta^2 - 10868567664718647029295\,\theta^3$\\
\hglue1cm${} + 28207512146724258923727\,\theta^4 + 317871169419755921339\,\theta^5 $\\
\hglue1cm${} - 5521452055307275801048\,\theta^6 - 931795923734199334224\,\theta^7$)\\
$\theta_{57}\approx0.122567$ (root of $22925 - 207210\,\theta + 164557\,\theta^2$)\\
$\theta_{58}\approx0.127445$ (root of $2788980871911465873944400 - 54602359795543790717551505\,\theta$\\
\hglue1cm${} + 256853207961211029447962549\,\theta^2 + 89906421920889229547327678\,\theta^3$\\
\hglue1cm${} - 752455769614548359339949574\,\theta^4 + 255352508719115525529947003\,\theta^5$\\
\hglue1cm${} + 442191963735884001737980625\,\theta^6 - 239218537833107616911584376\,\theta^7$)\\
$\theta_{59}\approx0.131175$ (root of $49375943 - 415750391\,\theta + 299888348\,\theta^2$)\\
$\theta_{60}\approx0.132459$ (root of $143513022619746018669526873837491600$\\
\hglue1cm${} + 15203910160924318659900866458957308720\,\theta$\\
\hglue1cm${} - 1411114194312455670913597321350939595784\,\theta^2$\\
\hglue1cm${} + 8364732971583765411812649830782376974801\,\theta^3$\\
\hglue1cm${} + 18158386978388110676635348720752450625636\,\theta^4$\\
\hglue1cm${} - 57382235622240037875378374203508631176046\,\theta^5$\\
\hglue1cm${} - 35033606577187313236105278528986696655804\,\theta^6$\\
\hglue1cm${} + 146980085520612377730283888489854048198033\,\theta^7$\\
\hglue1cm${} - 36399257357980852171910885132072002975200\,\theta^8$\\
\hglue1cm${} - 120723967403035949262443029071550255858332\,\theta^9$\\
\hglue1cm${} + 99871251458308105088806500088716932262312\,\theta^{10}$\\
\hglue1cm${} - 20607307655490616016545880781929743936352\,\theta^{11}$\\
\hglue1cm${} - 2399201882479034085378887614400221417760\,\theta^{12}$\\
\hglue1cm${} + 578864372257931116292868638079322994176\,\theta^{13}$)\\
$\theta_{61}\approx0.135985$ (root of $453 - 3698\,\theta + 2697\,\theta^2$)\\
$\theta_{62}\approx0.141109$ (root of $42031503195645968 - 287371985069366399\,\theta$\\
\hglue1cm${} - 154893412202865613\,\theta^2 + 583091512457455351\,\theta^3- 68486965475961355\,\theta^4 $\\
\hglue1cm${} - 136576631328641552\,\theta^5$)\\
$\theta_{63}\approx0.142653$ (root of $67193207 - 524545673\,\theta + 375179318\,\theta^2$)\\
$\theta_{64}\approx0.144811$ (root of $5895 - 45304\,\theta + 31737\,\theta^2$)\\
$\theta_{65}\approx0.145171$ (root of $22095466997 - 255539273500\,\theta + 834844019492\,\theta^2$\\
\hglue1cm${} - 890947089792\,\theta^3 + 299815899904\,\theta^4$)\\
$\theta_{66}\approx0.145310$ (root of $15650525033 - 413450240736\,\theta + 2600512122484\,\theta^2$\\
\hglue1cm${} - 3626566188080\,\theta^3 + 1447256560384\,\theta^4$)\\
$\theta_{67}\approx0.146912$ (root of $123985 - 3905116\,\theta + 20836819\,\theta^2$)\\
$\theta_{68}\approx0.147245$ (root of $81112271982771456 - 3450350938797389552\,\theta $\\
\hglue1cm${} + 46940285515777139336\,\theta^2 - 205036405356966452675\,\theta^3$\\
\hglue1cm${} + 104229490423164281702\,\theta^4 + 239570955153413384545\,\theta^5$\\
\hglue1cm${} - 180782164276324080448\,\theta^6 + 32098656047492471808\,\theta^7$)\\
$\theta_{69}\approx0.148149$ (root of $856748372818023504 - 17834677044742215743\,\theta $\\
\hglue1cm${} + 130772281989721194986\,\theta^2 - 382313919483453580679\,\theta^3 $\\
\hglue1cm${} + 299664189704368875492\,\theta^4 + 235160875621920624132\,\theta^5$\\
\hglue1cm${} - 264479813418800856320\,\theta^6 + 14909300130971796928\,\theta^7$)\\
$\theta_{70}\approx0.152502$ (root of $3811606140251090932648904222557341 $\\
\hglue1cm${} - 48659723325357212127573509312325281\,\theta$\\
\hglue1cm${} - 5559748714014470227347123791278703915\,\theta^2$\\
\hglue1cm${} + 58551832449956278963723525304686434812\,\theta^3$\\
\hglue1cm${} - 130955154186533119929354058122354955912\,\theta^4$\\
\hglue1cm${} - 141121091350991485099365617874071584624\,\theta^5$\\
\hglue1cm${} + 667410142027922297849569442082870073584\,\theta^6$\\
\hglue1cm${} - 228329077049470850859126664736673855872\,\theta^7$\\
\hglue1cm${} - 956540102362576165086832083540348188224\,\theta^8$\\
\hglue1cm${} + 832239035092726029898030206291108825600\,\theta^9$\\
\hglue1cm${} + 290295269117768212491290172593959914496\,\theta^{10}$\\
\hglue1cm${} - 549751317905943940582569497496521026560\,\theta^{11}$\\
\hglue1cm${} + 163798995725649560992667028889170631680\,\theta^{12}$)\\
$\theta_{71}\approx0.153828$ (root of $18074088461 - 489486828168\,\theta + 2740068166648\,\theta^2 $\\
\hglue1cm${} - 2125555458848\,\theta^3+ 216860715472\,\theta^4$)\\
$\theta_{72}\approx0.154737$ (root of $1532584065 - 29711728576\,\theta + 153512273652\,\theta^2 $\\
\hglue1cm${} - 173198601776\,\theta^3 + 54045846912\,\theta^4$)\\
$\theta_{73}\approx0.154894$ (root of $7693595 - 56442557\,\theta + 43723254\,\theta^2$)\\
$\theta_{74}\approx0.158434$ ($=35646/224989$)\\
$\theta_{75}\approx0.159386$ ($=1828/11469$)\\
$\theta_{76}\approx0.161238$ (root of $43162535 - 723232274\,\theta + 2825250168\,\theta^2$)\\
$\theta_{77}\approx0.162957$ (root of $11136593 - 242747284\,\theta + 1070261476\,\theta^2$)\\
$\theta_{78}\approx0.165599$ (root of $11136593 - 32083714\,\theta - 212361744\,\theta^2$)\\
$\theta_{79}\approx0.166815$ ($=5621/33696$)\\
$\theta_{80}\approx0.168091$ ($=14749/87744$)\\
$\theta_{81}\approx0.170083$ (root of $26417573 + 78007830\,\theta - 1371860048\,\theta^2$)\\
$\theta_{82}\approx0.175738$ ($=1290144/7341293$)\\
$\theta_{83}\approx0.176747$ (root of $3496103 - 200622112\,\theta + 1023165504\,\theta^2$)\\
$\theta_{84}\approx0.181200$ ($=6213/34288$)\\
$\theta_{85}\approx0.201498$ ($=20687/102666$)\\
$\theta_{86}\approx0.204058$ (root of $2709997 - 172154050\,\theta + 778569360\,\theta^2$)\\
$\theta_{87}\approx0.210558$ (root of $13549985 - 884171324\,\theta + 3893547552\,\theta^2$)\\
$\theta_{88}\approx0.215651$ (root of $29851341 - 1952936938\,\theta + 8414116560\,\theta^2$)\\
$\theta_{89}\approx0.215841$ (root of $314526381 - 1543673374\,\theta + 400585240\,\theta^2$)\\
$\theta_{90}\approx0.216321$ ($=20149/93144$)\\
$\theta_{91}\approx0.232970$ (root of $32913017 - 2145707382\,\theta + 8603825328\,\theta^2$)\\
$\theta_{92}\approx0.235148$ (root of $29851341 - 1957006690\,\theta + 7782577296\,\theta^2$)\\
$\theta_{93}\approx0.235426$ (root of $13549985 - 891632536\,\theta + 3542848128\,\theta^2$)\\
$\theta_{94}\approx0.241681$ ($=1676576/6937133$)\\
$\theta_{95}\approx0.254573$ ($=9588/37663$)\\
$\theta_{96}\approx0.255213$ ($=1591/6234$)\\
$\theta_{97}\approx0.282286$ ($=1141/4042$)\\
$\theta_{98}\approx0.284187$ ($=5796/20395$)\\
$\theta_{99}\approx0.291630$ ($=655/2246$)\\
$\theta_{100}\approx0.312202$ ($=655/2098$)\\
$\theta_{101}\approx0.315534$ ($=65/206$)\\
$\theta_{102}\approx0.317736$ ($=2397/7544$)\\
$\theta_{103}\approx0.322499$ ($=1141/3538$)\\
$\theta_{104}\approx0.332182$ ($=481/1448$)\\
$\theta_{105}\approx0.340483$ ($=2397/7040$)\\
$\theta_{106}\approx0.354185$ ($=1591/4492$)\\
$\theta_{107}\approx0.364699$ ($=655/1796$)\\
$\theta_{108}\approx0.377510$ ($=44268/117263$)\\
$\theta_{109}\approx0.377551$ ($=37/98$)\\
$\theta_{110}\approx0.384504$ ($=799/2078$)\\
$\theta_{111}\approx0.384544$ ($=2901/7544$)\\
$\theta_{112}\approx0.386798$ ($=46044/119039$)\\
$\theta_{113}\approx0.393855$ ($=141/358$)\\
$\theta_{114}\approx0.398947$ ($=1591/3988$)\\
$\theta_{115}\approx0.400756$ ($=1591/3970$)\\
$\theta_{116}\approx0.409866$ ($=1105/2696$)\\
$\theta_{117}\approx0.430543$ ($=967/2246$)\\
$\theta_{118}\approx0.432877$ ($=55716/128711$)\\
$\theta_{119}\approx0.433117$ (root of $702880555 - 559542428\,\theta - 2454987584\,\theta^2$)\\
$\theta_{120}\approx0.436721$ (root of $1845299991 - 4448512682\,\theta + 510999680\,\theta^2$)\\
$\theta_{121}\approx0.437496$ ($=56773/129768$)\\
$\theta_{122}\approx0.449657$ (root of $16528913 - 1049687266\,\theta + 2252670096\,\theta^2$)\\
$\theta_{123}\approx0.451530$ (root of $16653035 - 1057502284\,\theta + 2260360992\,\theta^2$)\\
$\theta_{124}\approx0.494544$ (root of $16528913 - 1043243492\,\theta + 2041923424\,\theta^2$)\\
$\theta_{125}\approx0.495084$ (root of $3864427485 - 3803622812493\,\theta + 42247177557968\,\theta^2$\\
\hglue1cm${} - 69847029193024\,\theta^3$)\\
$\theta_{126}\approx0.495197$ (root of $182368106747395200 + 881216426468870946\,\theta $\\
\hglue1cm${} - 4780752501776107548\,\theta^2 - 13041255680021548143\,\theta^3$\\
\hglue1cm${} + 64736996048499666656\,\theta^4 - 73515304998471077576\,\theta^5$\\
\hglue1cm${} + 37718067426331853440\,\theta^6 - 16799618113355584000\,\theta^7$)\\
$\theta_{127}\approx0.4958750$ (root of $1844123161 - 71232145332\,\theta + 284721068896\,\theta^2 $\\
\hglue1cm${} - 383623527168\,\theta^3 + 169415497728\,\theta^4$)\\
$\theta_{128}\approx0.4958752$ (root of $1207110151 - 30427610592\,\theta + 90295083136\,\theta^2 $\\
\hglue1cm${} - 75785852928\,\theta^3 + 15200022528\,\theta^4$)\\
$\theta_{129}\approx0.496125$ (root of $48577946559495994080 - 604139391703113490287\,\theta$\\
\hglue1cm${} + 2298236071942812885678\,\theta^2 - 1728960962420377483391\,\theta^3$\\
\hglue1cm${} - 5064706681585975840544\,\theta^4 + 1279180872137074137472\,\theta^5$\\
\hglue1cm${}  + 19263694030415189268480\,\theta^6 - 16527190457227168051200\,\theta^7$)\\
$\theta_{130}\approx0.496212$ (root of $139587367297173051168 - 135782955080259495272577\,\theta $\\
\hglue1cm${}+ 1873662221810254572134561\,\theta^2 - 6306085403811779286897552\,\theta^3 $\\
\hglue1cm${}+ 3889005095548638820291008\,\theta^4 + 4461194696950345843390464\,\theta^5$\\
\hglue1cm${} + 13439789031937874190594048\,\theta^6 - 35092289343041827660431360\,\theta^7$\\
\hglue1cm${} + 17873545461788238682521600\,\theta^8$)\\
\end{small}
\bigskip

\begin{small}
Banker's best response to the Players' maximin strategy in the independent cooperative equilibrium.  (Not Banker's minimax strategy, for which see Appendix A.)  Pairs followed by a colon are Player~1's and Player~2's third-card values (10 = stand, 11 = natural).  Entries are maximum Banker drawing totals (e.g., 5 means that Banker draws on 5 or less and stands on 6 or 7).  Plus signs indicate that Banker is indifferent on next higher total (e.g., 5+ means Banker draws on 5 or less, is indifferent on 6, and stands on 7).  Two plus signs indicate that Banker is indifferent on next two higher totals.  Number ranges in parentheses refer to the 131 intervals in $(0,1/2]$, with interval $i$ being $(\theta_{i-1},\theta_i)$; here $\theta_0=0$ and $\theta_{131}=1/2$.\medskip

$(0,0)$: 3.  
$(0,1)$: 3.  
$(0,2)$: 4 (1--119), 3 (120--131).  
$(0,3)$: 4.  
$(0,4)$: 5 (1--10), 4 (11--131).  
$(0,5)$: 5 (1--105), 4 (106--131).  
$(0,6)$: 6 (1--11), 5+ (12), 5 (13--111), 4 (112--131).  
$(0,7)$: 6 (1--37), 5+ (38--40), 5 (41--42), 4+ (43--44), 4 (45--46), 4+ (47--49), 4 (50--100), 3 (101--131).  
$(0,8)$: 2 (1--33), 2+ (34--35), 3 (36--40), 2+ (41--42), 3 (43--131).  
$(0,9)$: 3.  
$(0,10)$: 5.  
$(0,11)$: 3.

$(1,0)$: 3.  
$(1,1)$: 3.  
$(1,2)$: 4.  
$(1,3)$: 4.  
$(1,4)$: 5 (1--8), 4 (9--131).  
$(1,5)$: 5 (1--113), 4 (114--131).  
$(1,6)$: 6 (1--4), 5 (5--120), 4 (121--131).  
$(1,7)$: 6 (1--31), 5 (32--41), 4 (42--43), 5 (44--45), 4+ (46), 5 (47--52), 4+ (53--55), 5 (56), 4+ (57), 4 (58--131).  
$(1,8)$: 2 (1--26), 3 (27--131).  
$(1,9)$: 3.  
$(1,10)$: 5.  
$(1,11)$: 3.

$(2,0)$: 3.  
$(2,1)$: 3 (1--66), 3+ (67), 3 (68--69), 3+ (70--71), 4 (72--74), 3 (75--77), 4 (78--131).  
$(2,2)$: 4.  
$(2,3)$: 4.  
$(2,4)$: 5 (1--17), 4 (18--131).  
$(2,5)$: 5 (1--124), 4 (125--126), 4+ (127), 4 (128--131).  
$(2,6)$: 6 (1--3), 5 (4--131).  
$(2,7)$: 6 (1--32), 5 (33--63), 4+ (64--65), 5 (66--86), 4 (87--131).  
$(2,8)$: 2 (1--14), 3 (15--131).  
$(2,9)$: 3.  
$(2,10)$: 5.  
$(2,11)$: 4.

$(3,0)$: 3 (1--109), 4 (110--131).  
$(3,1)$: 3 (1--29), 4 (30--131).  
$(3,2)$: 4.  
$(3,3)$: 4.  
$(3,4)$: 5 (1--34), 4 (35--131).  
$(3,5)$: 5.  
$(3,6)$: 6 (1--7), 5 (8--131).  
$(3,7)$: 6 (1--39), 5 (40--107), 4 (108--131).  
$(3,8)$: 2 (1-4), 3 (5--115), 4 (116--131).  
$(3,9)$: 3.  
$(3,10)$: 5.  
$(3,11)$: 4.

$(4,0)$: 3 (1--92), 4 (93--131).  
$(4,1)$: 3 (1--13), 4 (14--131).  
$(4,2)$: 4.  
$(4,3)$: 4.  
$(4,4)$: 5.  
$(4,5)$: 5.  
$(4,6)$: 6 (1--15), 5 (16--131).  
$(4,7)$: 6 (1--59), 5+ (60), 5 (61--131).  
$(4,8)$: 2 (1), 3 (2--96), 4 (97--131).  
$(4,9)$: 3 (1--105), 4 (106--131).  
$(4,10)$: 5.  
$(4,11)$: 4 (1--20), 5 (21--22), 4 (23), 4+ (24), 4 (25--27), 4+ (28--30), 5 (31--61), 4+ (62), 4 (63--64), 5 (65--131).

$(5,0)$: 3 (1--88), 3+ (89), 4 (90--131).  
$(5,1)$: 3 (1--9), 4 (10--131).  
$(5,2)$: 4.  
$(5,3)$: 4 (1--103), 5 (104--131).  
$(5,4)$: 5.  
$(5,5)$: 5.  
$(5,6)$: 6 (1--28), 5 (29--131).  
$(5,7)$: 6 (1--93), 5 (94--131).  
$(5,8)$: 2 (1--6), 3 (7--91), 4 (92--114), 5 (115--131).  
$(5,9)$: 3 (1--102), 4 (103--131).  
$(5,10)$: 6 (1--12), 5+ (13--46), 5 (47--58), 5+ (59--66), 5 (67--131).  
$(5,11)$: 5.

$(6,0)$: 3 (1--104), 4 (105--131).  
$(6,1)$: 3 (1--18), 4 (19--131).  
$(6,2)$: 4 (1--123), 5 (124--131).  
$(6,3)$: 4 (1--97), 5 (98--131).  
$(6,4)$: 5.  
$(6,5)$: 5.  
$(6,6)$: 6.  
$(6,7)$: 6.  
$(6,8)$: 2 (1--16), 3 (17--106), 5 (107--131).  
$(6,9)$: 3 (1--122), 4 (123--131).  
$(6,10)$: 6 (1--127), 5+ (128), 6 (129--130), 5+ (131).  
$(6,11)$: 6.

$(7,0)$: 3.  
$(7,1)$: 3 (1--35), 3+ (36), 4 (37--78), 3 (79--81), 4 (82),  3+ (83), 4 (84--131).  
$(7,2)$: 4.  
$(7,3)$: 4.  
$(7,4)$: 5.  
$(7,5)$: 5.  
$(7,6)$: 6.  
$(7,7)$: 6.  
$(7,8)$: 2 (1--38), 3 (39--131).  
$(7,9)$: 3.  
$(7,10)$: 6.  
$(7,11)$: 6.

$(8,0)$: 3.  
$(8,1)$: 3.  
$(8,2)$: 4 (1--116), 3 (117--131).  
$(8,3)$: 4.  
$(8,4)$: 5 (1--46), 4+ (47), 4 (48--51), 4+ (52), 5 (53), 4+ (54), 4 (55--58), 5 (59--67), 4+ (68), 4 (69--131).  
$(8,5)$: 5.  
$(8,6)$: 6 (1--44), 5+ (45), 5 (46--58), 5+ (59), 5 (60), 5+ (61), 5 (62), 5+ (63), 5 (64--131).  
$(8,7)$: 6 (1--99), 3 (100--131).  
$(8,8)$: 2.  
$(8,9)$: 3.  
$(8,10)$: 6 (1--48), 5+ (49--51), 6 (52--55), 5+ (56--58), 6 (59--70), 5+ (71--73), 5 (74--131). 
$(8,11)$: 2. 

$(9,0)$: 3.  
$(9,1)$: 3.  
$(9,2)$: 4 (1--101), 3 (102--131).  
$(9,3)$: 4 (1--128), 3+ (129), 3 (130--131).  
$(9,4)$: 5 (1--21), 4+ (22--23), 4 (24--131).  
$(9,5)$: 5 (1--110), 4 (111--131).  
$(9,6)$: 6 (1--24), 5+ (25), 5 (26--117), 4 (118--131).  
$(9,7)$: 6 (1--72), 4 (73--75), 4++ (76), 4 (77--87), 3 (88--131).  
$(9,8)$: 2 (1--85), 3 (86--131).  
$(9,9)$: 3.  
$(9,10)$: 5+ (1--11), 5 (12--131).  
$(9,11)$: 2 (1--49), 2+ (50), 2 (51--57), 2+ (58), 2 (59--73), 2+ (74--75), 3 (76--131).

$(10,0)$: 3 (1--65), 3+ (66--70), 3 (71), 3+ (72--79), 4 (80--118), 4+ (119--121), 5 (122--131).  
$(10,1)$: 3 (1--5), 4 (6--108), 5 (109--131).  
$(10,2)$: 4 (1--98), 5 (99--131).  
$(10,3)$: 4 (1--80), 4+ (81--82), 5 (83--131).  
$(10,4)$: 5.  
$(10,5)$: 5.  
$(10,6)$: 6 (1--19), 5+ (20--21), 5 (22--79), 5+ (80), 5 (81--83), 5+ (84--88), 6 (89--90), 5+ (91--94), 6 (95--121), 5+ (122--131).  
$(10,7)$: 6.  
$(10,8)$: 2 (1--2), 3 (3--84), 4 (85--89), 4+ (90), 5 (91--131).  
$(10,9)$: 3 (1--95), 4 (96--112), 5 (113--131).  
$(10,10)$: 5 (1--30), 5+ (31--33), 5 (34--36), 5+ (37), 5 (38--94), 5+ (95--118), 5 (119--131).  
$(10,11)$: 5 (1--94), 5+ (95--118), 5 (119--131).

$(11,0)$: 3.  
$(11,1)$: 3.  
$(11,2)$: 4.  
$(11,3)$: 4.  
$(11,4)$: 5.  
$(11,5)$: 5.  
$(11,6)$: 6.  
$(11,7)$: 6.  
$(11,8)$: 2.  
$(11,9)$: 3.  
$(11,10)$: 6 (1--76), 5+ (77--125), 5 (126--131).
\end{small}\bigskip

The information sets on which Banker is indifferent in his best response to the Players' maximin strategy in the independent cooperative equilibrium.
\par\medskip

\tabcolsep=1.5mm
\begin{center}
\begin{small}
\begin{tabular}{cc}
\hline
\noalign{\smallskip}
intervals & information sets \\
\noalign{\smallskip} \hline
\noalign{\smallskip}
1--11 & $(9,10,6)$ \\
12 & $(0,6,6)$ \\
13--19, 26--27 & $(5,10,6)$ \\
20--21 & $(5,10,6)$, $(10,6,6)$ \\
22--23 & $(5,10,6)$, $(9,4,5)$ \\
24, 28--30 & $(4,11,5)$, $(5,10,6)$ \\
25 & $(5,10,6)$, $(9,6,6)$ \\
31--33, 37 & $(5,10,6)$, $(10,10,6)$ \\
34--35, 41--42 & $(0,8,3)$, $(5,10,6)$ \\
\noalign{\smallskip}
\hline
\end{tabular}
\end{small}
\end{center}

\tabcolsep=1.5mm
\begin{center}
\begin{small}
\begin{tabular}{cc}
\hline
\noalign{\smallskip}
intervals & information sets \\
\noalign{\smallskip} \hline
\noalign{\smallskip}
36 & $(5,10,6)$, $(7,1,4)$ \\ 
38--40 & $(0,7,6)$, $(5,10,6)$ \\ 
43--44 & $(0,7,5)$, $(5,10,6)$ \\ 
45, 59, 61, 63 & $(5,10,6)$, $(8,6,6)$ \\ 
46 & $(1,7,5)$, $(5,10,6)$ \\ 
47 & $(0,7,5)$, $(8,4,5)$ \\ 
48 & $(0,7,5)$ \\ 
49 & $(0,7,5)$, $(8,10,6)$ \\ 
50, 58 & $(8,10,6)$, $(9,11,3)$ \\
51, 56 & $(8,10,6)$ \\ 
52 & $(8,4,5)$ \\ 
53, 55 & $(1,7,5)$ \\ 
54 & $(1,7,5)$, $(8,4,5)$ \\
57 & $(1,7,5)$, $(8,10,6)$ \\ 
60 & $(4,7,6)$, $(5,10,6)$ \\
62 & $(4,11,5)$, $(5,10,6)$ \\ 
64--65 & $(2,7,5)$, $(5,10,6)$ \\ 
66 & $(5,10,6)$, $(10,0,4)$ \\
67, 70 & $(2,1,4)$, $(10,0,4)$ \\
68 & $(8,4,5)$, $(10,0,4)$ \\
69 & $(10,0,4)$ \\
71 & $(2,1,4)$, $(8,10,6)$ \\
72--73 & $(8,10,6)$, $(10,0,4)$ \\
74--75 & $(9,11,3)$, $(10,0,4)$ \\
76 & $(9,7,5)$, $(9,7,6)$, $(10,0,4)$ \\
77--79 & $(10,0,4)$, $(11,10,6)$ \\
80, 84--88, 91--94, 122--125 & $(10,6,6)$, $(11,10,6)$ \\
81--82 & $(10,3,5)$, $(11,10,6)$ \\
83 & $(7,1,4)$, $(11,10,6)$ \\
89 & $(5,0,4)$, $(11,10,6)$ \\
90 & $(10,8,5)$, $(11,10,6)$ \\
95--118 & $(10,10,6)$, $(10,11,6)$, $(11,10,6)$ \\
119--121 & $(10,0,5)$, $(11,10,6)$ \\
126, 130 & $(10,6,6)$ \\
127 & $(2,5,5)$, $(10,6,6)$ \\
128, 131 & $(6,10,6)$, $(10,6,6)$ \\
129 & $(9,3,4)$, $(10,6,6)$ \\
\noalign{\smallskip}
\hline
\end{tabular}
\end{small}
\end{center}
\bigskip

\begin{small}

The Players' maximin strategies in the independent cooperative equilibrium.  $p_1$ (resp., $p_2$) is the probability that Player 1 (resp., Player 2) draws on 5.\medskip

\noindent(intervals 1--11)
\begin{eqnarray*}
p_1&=&0, \quad p_2=9 (89 - 12 \,\theta)/[11(89-68\,\theta)].
\end{eqnarray*}

\noindent(interval 12)
\begin{eqnarray*}
p_1&=&0, \quad p_2=89 (53 - 462 \,\theta)/[16 (267 + 142 \,\theta)].
\end{eqnarray*}

\noindent(intervals 13--19, 26--27)
\begin{eqnarray*}
p_1&=&0, \quad p_2=3(267 - 88\,\theta)/(979-800 \,\theta).
\end{eqnarray*}

\noindent(intervals 20--21)
\begin{eqnarray*}
p_1&=&[-6191 + 962008 \,\theta - 705792 \,\theta^2 - (427951969 - 54744421840 \,\theta \\
&&{}+ 1809650847808 \,\theta^2  - 2714674851840 \,\theta^3 + 1043139723264 \,\theta^4)^{1/2}]\\
&&{}/[32 (151 - 12746 \,\theta + 11088 \,\theta^2)],\\
p_2&=&[139055 - 1171096\,\theta + 945408\,\theta^2 + (427951969 - 54744421840\,\theta \\
&&{}+ 1809650847808\,\theta^2 - 2714674851840\,\theta^3 + 1043139723264\,\theta^4)^{1/2}]\\
&&{}/[32 (4521 + 3992 \,\theta - 6400 \,\theta^2)].
\end{eqnarray*}

\noindent(intervals 22--23)
\begin{eqnarray*}
p_1&=&[-11837 + 596844 \,\theta - 458830 \,\theta^2 - 30  \,\theta (114287524 - 161786452 \,\theta\\
&&{}+ 57427729 \,\theta^2)^{1/2}]/[16(133 - 10326 \,\theta + 8960 \,\theta^2)],\\
p_2&=&[-8006 + 7063 \,\theta+ (114287524 - 161786452 \,\theta + 57427729 \,\theta^2)^{1/2}]\\
&&{}/[32 (103-80 \,\theta)].
\end{eqnarray*}

\noindent(intervals 24, 28--30)
\begin{eqnarray*}
p_1&=&1/16, \quad p_2=3(3-\theta)/(11-9 \,\theta).
\end{eqnarray*}

\noindent(interval 25)
\begin{eqnarray*}
p_1&=&[-13439 + 618540 \,\theta - 478924 \,\theta^2 - 30 \,\theta (109743865 - 151394620 \,\theta\\
&&{} + 51579556 \,\theta^2)^{1/2}]/[16(151 - 10500 \,\theta + 9116 \,\theta^2)],\\
p_2&=& [-7637 + 6694 \,\theta + (109743865 - 151394620 \,\theta + 51579556 \,\theta^2)^{1/2}]\\
&&{}/[32 (109 - 86 \,\theta)].
\end{eqnarray*}

\noindent(intervals 31--33, 37)
\begin{eqnarray*}
p_1&=& (-22+\sqrt{649})/10,\\
p_2&=&3 [4521 + 408 \,\theta - 2816 \,\theta^2 - 128\sqrt{649} \,\theta (1-\theta)]\\
&&{}/(16577 - 18304 \,\theta + 3840 \,\theta^2).
\end{eqnarray*}

\noindent(intervals 34--35, 41--42)
\begin{eqnarray*}
p_1&=&\{-13439 + 528700 \,\theta - 400044 \,\theta^2 - 10\,\theta [11(148467731 - 217532180 \,\theta\\
&&{} + 79180940 \,\theta^2)]^{1/2}\}/[16 (151 - 10500 \,\theta + 9116 \,\theta^2)],\\
p_2&=&\{-29591 + 26434\,\theta + [11(148467731 - 217532180\,\theta + 79180940\,\theta^2)]^{1/2}\}\\
&&{}/[32 (415 - 338\,\theta)].
\end{eqnarray*}

\noindent(interval 36)
\begin{eqnarray*}
p_1&=& [-15041 - 118912 \,\theta + 81208 \,\theta^2 + 30 \,\theta (122583865 - 197073880 \,\theta\\
&&{}+ 79914256 \,\theta^2)^{1/2}]/[16 (169 + 5048 \,\theta- 4532 \,\theta^2)],\\
p_2&=&[-8597 + 7900 \,\theta + (122583865 - 197073880 \,\theta + 79914256 \,\theta^2)^{1/2}]\\
&&{}/[32 (94-77 \,\theta)].
\end{eqnarray*}

\noindent(intervals 38--40)
\begin{eqnarray*}
p_1&=&[-58295 + 911362 \,\theta - 690174 \,\theta^2 - 30\,\theta(105051121 - 132870958 \,\theta\\
&&{}+ 40507681 \,\theta^2)^{1/2}]/[16 (655 - 13778 \,\theta + 11616 \,\theta^2)],\\
p_2&=&[-7385 + 6319\,\theta + (105051121 - 132870958\,\theta + 40507681\,\theta^2)^{1/2}]\\
&&{}/[64 (56 - 43\,\theta)].
\end{eqnarray*}

\noindent(intervals 43--44)
\begin{eqnarray*}
p_1&=&[-58295 + 411627 \,\theta - 313054 \,\theta^2 + 5 \,\theta (1470229561 - 2720513020 \,\theta \\
&&{}+ 1260212260 \,\theta^2)^{1/2}]/[16 (655 - 2548 \,\theta + 1756 \,\theta^2)],\\
p_2&=&[-30005 + 29062 \,\theta + (1470229561 - 2720513020 \,\theta  + 1260212260 \,\theta^2)^{1/2}]\\
&&{}/[32 (307-284 \,\theta)].
\end{eqnarray*} 

\noindent(intervals 45, 59, 61, 63)
\begin{eqnarray*}
p_1&=&\{-13439 + 418646 \,\theta - 315746 \,\theta^2 - 30 \,\theta  [5(21948773 - 32055266 \,\theta\\
&&{}+ 11608013 \,\theta^2)]^{1/2}\}/[16(151 - 8254 \,\theta + 7144 \,\theta^2)],\\
p_2&=&\{-7637 + 6817 \,\theta + [5(21948773 - 32055266 \,\theta + 11608013 \,\theta^2)]^{1/2}\}\\
&&{}/[32 (109-89 \,\theta)].
\end{eqnarray*}

\noindent(interval 46)
\begin{eqnarray*}
p_1&=&[-58295 + 21946 \,\theta- 20232 \,\theta^2 + 120 \,\theta (25153300 - 40965940 \,\theta \\
&&{}+ 16732321 \,\theta^2)^{1/2}]/[16(655 + 6436 \,\theta - 6132 \,\theta^2)],\\
p_2&=&[-3790 +3503 \,\theta + (25153300 - 40965940 \,\theta + 16732321 \,\theta^2)^{1/2}]\\
&&{}/[8 (185 - 157 \,\theta)].
\end{eqnarray*}

\noindent(interval 47)
\begin{eqnarray*}
p_1&=&(-178+ 1295 \,\theta)/[32 (1 - 35 \,\theta)], \quad
p_2= (7093-32102 \,\theta)/[16 (253 + 28 \,\theta)].
\end{eqnarray*}

\noindent(interval 48)
\begin{eqnarray*}
p_1&=&\{(-445 + 36 \,\theta) (340880 - 429027 \,\theta - 1074404 \,\theta^2) + 40 [114 \,\theta (1 - \theta) (340880 \\
&&{}- 429027 \,\theta - 1074404 \,\theta^2) (2824835 + 4392257 \,\theta - 1954788 \,\theta^2)]^{1/2}\}\\
&&{}/[16 (5 - 4 \,\theta) (340880 - 429027 \,\theta - 1074404 \,\theta^2)],\\
p_2&=&\{(125 - 214 \,\theta) (2824835 + 4392257 \,\theta - 1954788 \,\theta^2) - 40 [114 \,\theta (1 - \theta) (340880 \\
&&{}- 429027 \,\theta - 1074404 \,\theta^2) (2824835 + 4392257 \,\theta - 1954788 \,\theta^2)]^{1/2}\}\\
&&{} /[16 (5 - 4 \,\theta) (2824835 + 4392257 \,\theta - 1954788 \,\theta^2)].
\end{eqnarray*}

\noindent(interval 49)
\begin{eqnarray*}
p_1&=&[-58295 + 435087 \,\theta - 177046 \,\theta^2 + 5 \,\theta (208536601 + 2854478852 \,\theta \\
&&{}- 1580996444 \,\theta^2)^{1/2}]/[16 (655 - 3238 \,\theta + 1624 \,\theta^2)],\\
p_2&=&[-5915 - 5606 \,\theta + (208536601 + 2854478852 \,\theta - 1580996444 \,\theta^2)^{1/2}]\\
&&{}/[32 (637-356 \,\theta)].
\end{eqnarray*}

\noindent(intervals 50, 58)
\begin{eqnarray*}
p_1&=&71/176, \quad p_2=9 (35-23  \,\theta)/(385-349 \,\theta).
\end{eqnarray*}

\noindent(intervals 51, 56)
\begin{eqnarray*}
p_1&=&\{(-979 + 926 \,\theta) (103136 - 2222227 \,\theta - 1082268 \,\theta^2) - 48 [-95 \,\theta (1 - \theta) (103136 \\
&&{}- 2222227 \,\theta - 1082268 \,\theta^2) (6214637 + 4078519 \,\theta - 6682468 \,\theta^2)]^{1/2}\}\\
&&{}/[16 (11 - 4 \,\theta) (103136 - 2222227 \,\theta - 1082268 \,\theta^2)],\\
p_2&=& 3\{(3 + 4 \,\theta) (6214637 + 4078519 \,\theta - 6682468 \,\theta^2) - [-95 \,\theta (1 - \theta) (103136 \\
&&{}- 2222227 \,\theta - 1082268 \,\theta^2) (6214637 + 4078519 \,\theta - 6682468 \,\theta^2)]^{1/2}\}\\
&&{}/[(11 - 4 \,\theta) (6214637 + 4078519 \,\theta - 6682468 \,\theta^2)].
\end{eqnarray*}

\noindent(interval 52)
\begin{eqnarray*}
p_1&=& \{(-89 + 142 \,\theta) (88112 - 4797177 \,\theta - 3428792 \,\theta^2)- 30 [-57 \,\theta (1 - \theta) (88112 - \\
&&{}4797177 \,\theta - 3428792 \,\theta^2) (564967 + 105761 \,\theta - 4281416 \,\theta^2)]^{1/2}\}\\
&&{}/[16 (1 - 8 \,\theta) (88112 - 4797177 \,\theta - 3428792 \,\theta^2)],\\
p_2&=&\{(1 + 622 \,\theta) (564967 + 105761 \,\theta - 4281416 \,\theta^2)- 30 [-57 \,\theta (1 - \theta) (88112 \\
&&{}- 4797177 \,\theta - 3428792 \,\theta^2) (564967 + 105761 \,\theta - 4281416 \,\theta^2)]^{1/2}\}\\
&&{}/[16 (1- 8 \,\theta) (564967 + 105761 \,\theta - 4281416 \,\theta^2)].
\end{eqnarray*}
 
\noindent(interval 53)
\begin{eqnarray*}
p_1&=& \{(-445 + 18 \,\theta) (342800 + 4906909 \,\theta + 2888148 \,\theta^2) + 30 [665 \,\theta (1 - \theta) (342800 \\
&&{}+ 4906909 \,\theta + 2888148 \,\theta^2) (2824835 + 4597481 \,\theta - 5411244 \,\theta^2)]^{1/2}\}\\
&&{}/[16 (5- 12 \,\theta) (342800 + 4906909 \,\theta + 2888148 \,\theta^2)],\\
p_2&=&\{(125 + 498 \,\theta) (2824835 + 4597481 \,\theta - 5411244 \,\theta^2)- 30 [665 \,\theta (1 - \theta) (342800 \\
&&{}+ 4906909 \,\theta + 2888148 \,\theta^2) (2824835 + 4597481 \,\theta - 5411244 \,\theta^2)]^{1/2}\}\\
&&{}/[16 (5 - 12 \,\theta) (2824835 + 4597481 \,\theta - 5411244 \,\theta^2)].
\end{eqnarray*}

\noindent(interval 54)
\begin{eqnarray*}
p_1&=&(-89 +661 \,\theta)/[16 (1 - 29 \,\theta)],\quad p_2=(585 - 3077 \,\theta)/[16 (15 + 13 \,\theta)].
\end{eqnarray*}

\noindent(interval 55)
\begin{eqnarray*}
p_1&=& \{(-445 + 18 \,\theta) (340880 + 4964509 \,\theta + 2832468 \,\theta^2) + 30 [665 \,\theta (1 - \theta) (340880 \\
&&{}+ 4964509 \,\theta + 2832468 \,\theta^2) (2824835 + 4626281 \,\theta - 5386284 \,\theta^2)]^{1/2}\}\\
&&{}/[16 (5 - 12 \,\theta) (340880 + 4964509 \,\theta + 2832468 \,\theta^2)],\\
p_2&=& \{(125 + 498 \,\theta) (2824835 + 4626281 \,\theta - 5386284 \,\theta^2)- 30 [665 \,\theta (1 - \theta) (340880 \\
&&{}+ 4964509 \,\theta + 2832468 \,\theta^2) (2824835 + 4626281 \,\theta - 5386284 \,\theta^2)]^{1/2}\}\\
&&{}/[16 (5 - 12 \,\theta) (2824835 + 4626281 \,\theta - 5386284 \,\theta^2)].
\end{eqnarray*} 

\noindent(interval 57)
\begin{eqnarray*}
p_1&=& [-58295 + 45406 \,\theta + 192222 \,\theta^2 + 30 \,\theta (284799825 - 251288370 \,\theta \\
&&{}+ 25348129 \,\theta^2)^{1/2}] /[16 (655 + 5746 \,\theta + 312 \,\theta^2)],\\
p_2&=&[-11145 + 8849 \,\theta + (284799825 - 251288370 \,\theta + 25348129 \,\theta^2)^{1/2}]\\
&&{}/[256 (30-23 \,\theta))].
\end{eqnarray*}

\noindent(interval 60)
\begin{eqnarray*}
p_1&=& [-58295 + 262268 \,\theta - 194246 \,\theta^2 + 10 \,\theta (587196736 - 1020241072 \,\theta \\
&&{}+ 444774961 \,\theta^2)^{1/2}]/[16 (655 - 302 \,\theta - 216 \,\theta^2)],\\
p_2&=&[5(-3752 + 3547 \,\theta) +  (587196736 - 1020241072 \,\theta + 444774961 \,\theta^2)^{1/2}]\\
&&{}/[32 (203-178 \,\theta)].
\end{eqnarray*}

\noindent(interval 62)
\begin{eqnarray*}
p_1&=&1/16, \quad p_2=3 (3- \theta)/(11-9 \,\theta).
\end{eqnarray*}

\noindent(intervals 64--65)
\begin{eqnarray*}
p_1&=&[-11659 + 8432 \,\theta - 5130 \,\theta^2 + 2 \,\theta (1781786944 - 2909303536 \,\theta \\
&&{}+ 1192851481 \,\theta^2)^{1/2}]/[16 (131 + 838 \,\theta - 832 \,\theta^2)],\\
p_2&=&[-32120  + 29701 \,\theta+ (1781786944 - 2909303536 \,\theta + 1192851481 \,\theta^2)^{1/2}]\\
&&{}/[32  (379-320 \,\theta)].
\end{eqnarray*}

\noindent(interval 66)
\begin{eqnarray*}
p_1&=&[-19721 + 221498 \,\theta - 143552 \,\theta^2 - (4342414609 - 75288376852 \,\theta \\
&&{}+ 382769338276 \,\theta^2  - 478895079168 \,\theta^3 + 174977224704 \,\theta^4)^{1/2}]\\
&&{}/[32 (481 - 4392 \,\theta + 3500 \,\theta^2)],\\
p_2&=&[184265 - 633466 \,\theta + 422592 \,\theta^2 +(4342414609 - 75288376852 \,\theta \\
&&{}+ 382769338276 \,\theta^2 - 478895079168 \,\theta^3 + 174977224704 \,\theta^4)^{1/2}]\\
&&{}/[32(4521 - 4192 \,\theta + 320 \,\theta^2)].
\end{eqnarray*}

\noindent(intervals 67, 70)
\begin{eqnarray*}
p_1&=&[-533 + 9778 \,\theta -(3171961 - 38476724 \,\theta + 150204804 \,\theta^2)^{1/2}]\\
&&{}/[32 (13 - 35 \,\theta)],\\
p_2&=&[61787 - 246656 \,\theta + 125684 \,\theta^2 + 10(1-\theta) (3171961 - 38476724 \,\theta \\
&&{}+ 150204804 \,\theta^2)^{1/2}]/[16 (137-4 \,\theta) (21-16 \,\theta)].
\end{eqnarray*}

\noindent(interval 68)
\begin{eqnarray*}
p_1&=&[-164 - 4091 \,\theta +(300304 - 9930856 \,\theta + 76356121 \,\theta^2)^{1/2}]/[128 (1 - 35 \,\theta)],\\
p_2&=&[5891 + 7330 \,\theta + 70706 \,\theta^2 - 10 (1- \theta) (300304 - 9930856 \,\theta + 76356121 \,\theta^2)^{1/2}]\\
&&{}/[16 (411 - 1610 \,\theta + 256 \,\theta^2)].
\end{eqnarray*}

\noindent(interval 69)
\begin{eqnarray*}
p_1&=&\{2 (9 - 2 \,\theta) (257904 - 1987157 \,\theta - 1758400 \,\theta^2)+ 5 [-7 \,\theta (1 - \theta) (1047291 \\
&&{}- 819531 \,\theta - 160768 \,\theta^2) (257904 - 1987157 \,\theta - 1758400 \,\theta^2)]^{1/2}\}\\
&&{}/[6 (257904 - 1987157 \,\theta - 1758400 \,\theta^2)],\\
p_2&=& \{(83 - 350 \,\theta) (1047291 - 819531 \,\theta - 160768 \,\theta^2) + 40 [-7 \,\theta (1 - \theta) (1047291 \\
&&{}- 819531 \,\theta - 160768 \,\theta^2) (257904 -1987157 \,\theta - 1758400 \,\theta^2)]^{1/2}\}\\
&&{}/[48 (1047291 - 819531 \,\theta - 160768 \,\theta^2)].
\end{eqnarray*}

\noindent(interval 71)
\begin{eqnarray*}
p_1&=&[-15041 + 377004 \,\theta - 327028 \,\theta^2 - 10 \,\theta (927192865 - 1849866100 \,\theta \\
&&{} + 922692004\,\theta^2)^{1/2}]/[16 (169 - 7556 \,\theta + 2592 \,\theta^2)],\\
p_2&=&[-29743 + 29702 \,\theta + (927192865 - 1849866100 \,\theta + 922692004 \,\theta^2)^{1/2}]\\
&&{}/[32 (26-25 \,\theta)].
\end{eqnarray*}

\noindent(intervals 72--73)
\begin{eqnarray*}
p_1&=&[-19721 + 196244 \,\theta - 262148 \,\theta^2 - (4342414609 - 76590765160 \,\theta \\
&&{}+ 378797212888 \,\theta^2 - 429088870560 \,\theta^3 + 139230322704 \,\theta^4)^{1/2}]\\
&&{}/[32 (481 - 4758 \,\theta + 1400 \,\theta^2)],\\
p_2&=& [184265 - 596692 \,\theta + 373764 \,\theta^2 + (4342414609 - 76590765160 \,\theta \\
&&{}+ 378797212888 \,\theta^2- 429088870560 \,\theta^3 + 139230322704 \,\theta^4)^{1/2}]\\
&&{}/[32 (4521 - 3706 \,\theta + 128 \,\theta^2)].
\end{eqnarray*}

\noindent(intervals 74--75)
\begin{eqnarray*}
p_1&=&71/176, \quad p_2=(37931-128622  \,\theta)/[16 (1371 - 352 \,\theta)].
\end{eqnarray*}

\noindent(interval 76)
\begin{eqnarray*}
p_1&=&[-82 + 8017 \,\theta - (75076 + 288796 \,\theta + 36666529 \,\theta^2)^{1/2}]/[64 (1 + 35 \,\theta)],\\
p_2&=&[54115 - 229866 \,\theta + 24362 \,\theta^2 + 10 (1- \theta) (75076 + 288796 \,\theta \\
&&{}+ 36666529 \,\theta^2)^{1/2}]/[16(2055-226 \,\theta -128 \,\theta^2)].
\end{eqnarray*}

\noindent(intervals 77--79)
\begin{eqnarray*}
p_1&=&(1443-9304 \,\theta)/(481-3850 \,\theta), \quad p_2= 9/11.
\end{eqnarray*}

\noindent(intervals 80, 84--88, 91--94, 122--125)
\begin{eqnarray*}
p_1&=& 3 (151 - 3520 \,\theta)/(151-12504 \,\theta), \quad p_2=9/11.
\end{eqnarray*}

\noindent(intervals 81--82)
\begin{eqnarray*}
p_1&=&(3423-15776 \,\theta)/(1141-18 \,\theta), \quad p_2=9/11.
\end{eqnarray*}

\noindent(interval 83)
\begin{eqnarray*}
p_1&=&(-15041+155416 \,\theta)/[16 (169 + 5446 \,\theta)], \quad p_2=9/11.
\end{eqnarray*}

\noindent(interval 89)
\begin{eqnarray*}
p_1&=&481 (-267+1390 \,\theta)/[16 (1443 - 320 \,\theta)], \quad p_2=9/11.
\end{eqnarray*}

\noindent(interval 90)
\begin{eqnarray*}
p_1&=&(4773-17126 \,\theta)/(1591-468 \,\theta), \quad p_2=9/11.
\end{eqnarray*}

\noindent(intervals 95--118)
\begin{eqnarray*}
p_1&=&9/11, \quad p_2=9/11.
\end{eqnarray*}

\noindent(intervals 119--121)
\begin{eqnarray*}
p_1&=&(13929-26282 \,\theta)/(4643-3520\,\theta), \quad p_2=9/11.
\end{eqnarray*}

\noindent(interval 126)
\begin{eqnarray*}
p_1&=& 3 \{3 (1910976 - 1870549 \,\theta - 9828920 \,\theta^2)+ 4 [10 \,\theta (1 - \theta) (508455 \\
&&{}+ 490401 \,\theta - 3262312 \,\theta^2) (1910976 - 1870549 \,\theta - 9828920 \,\theta^2)]^{1/2}\}\\
&&{}/[(3 + 8 \,\theta) (1910976 - 1870549 \,\theta - 9828920 \,\theta^2)],\\
p_2&=& \{(53 - 1032 \,\theta) (508455 + 490401 \,\theta - 3262312 \,\theta^2) - 64 [10 \,\theta (1 - \theta) (508455 \\
&&{}+ 490401 \,\theta - 3262312 \,\theta^2) (1910976 - 1870549 \,\theta - 9828920 \,\theta^2)]^{1/2}\}\\
&&{}/[16 (3 + 8 \,\theta) (508455 + 490401 \,\theta - 3262312 \,\theta^2)].
\end{eqnarray*}

\noindent(interval 127)
\begin{eqnarray*}
p_1&=& [-41 - 114 \,\theta + (18769 + 110148 \,\theta - 118716 \,\theta^2)^{1/2}]/[32 (1 + 7 \,\theta)],\\
p_2&=&[51101 - 27284 \,\theta - 238752 \,\theta^2  +320 (1- \theta) (18769 + 110148 \,\theta \\
&&{}- 118716 \,\theta^2)^{1/2}]/[16(411 + 436 \,\theta + 1568 \,\theta^2)].
\end{eqnarray*}

\noindent(intervals 128, 131)
\begin{eqnarray*}
p_1&=& \{-6191 + 932160 \,\theta - 1065024 \,\theta^2 - [(20687 - 1065024 \,\theta + 1065024 \,\theta^2) (20687 \\
&&{}- 1556544 \,\theta + 1556544 \,\theta^2)]^{1/2}\}/[32 (151 - 12928 \,\theta + 8256 \,\theta^2)],\\
p_2&=& \{139055 - 1197888 \,\theta + 1065024 \,\theta^2 +[(20687 - 1065024 \,\theta + 1065024 \,\theta^2) (20687 \\
&&{} - 1556544 \,\theta + 1556544 \,\theta^2)]^{1/2}\}/ [32 (4521 + 3584 \,\theta - 8256 \,\theta^2)].
\end{eqnarray*}

\noindent(interval 129)
\begin{eqnarray*}
p_1&=&3 [-943+6424 \,\theta-(9928801 - 53390544 \,\theta + 87503424 \,\theta^2 )^{1/2}]\\
&&{}/[32 (69-136 \,\theta)],\\
p_2&=&[-8494 - 59587 \,\theta +139192 \,\theta^2 + 5 (1- \theta)(9928801 - 53390544 \,\theta \\
&&{}+ 87503424 \,\theta^2)^{1/2}]/ [16(411 + 118 \,\theta- 1328 \,\theta^2)].
\end{eqnarray*}

\noindent(interval 130)
\begin{eqnarray*}
p_1&=&\{9 (633312 - 612583 \,\theta - 3283560 \,\theta^2) + 4 [30 \,\theta (1 - \theta) (515031 + 492289 \,\theta \\
&&{}- 3283560 \,\theta^2) (633312 - 612583 \,\theta - 3283560 \,\theta^2)]^{1/2}\}\\
&&{} /[(3 + 8 \,\theta) (633312 - 612583 \,\theta - 3283560 \,\theta^2)], \\
p_2&=&\{(53 - 1032 \,\theta) (515031 + 492289 \,\theta - 3283560 \,\theta^2) - 64 [30 \,\theta (1 - \theta) (515031\\
&&{} + 492289 \,\theta - 3283560 \,\theta^2) (633312 - 612583 \,\theta - 3283560 \,\theta^2)]^{1/2}\}\\
&&{}/[16 (3 + 8 \,\theta) (515031 + 492289 \,\theta - 3283560 \,\theta^2)].
\end{eqnarray*}\medskip
\end{small}

\begin{small}
The lower value of the game (to the Players) as a function of $\theta$ (listed by interval number), assuming the independent cooperative equilibriums.  See Appendix A for the upper value. \medskip

\noindent 1. $-16 (8304873109-17736605719 \,\theta+8304947412 \,\theta^2)/[116649493103 (89-68 \,\theta)]$ \\
2. $-16 (8303677038-17666477635 \,\theta+8256870772 \,\theta^2)/[116649493103 (89-68 \,\theta)]$ \\
3. $-16 (8295291102-17200020163 \,\theta+7937033332 \,\theta^2)/[116649493103 (89-68 \,\theta)]$ \\
4. $-16 (8294095031-17135289217 \,\theta+7892644892 \,\theta^2)/[116649493103 (89-68 \,\theta)]$ \\
5. $-16 (8291702889-17009425417 \,\theta+7806326812 \,\theta^2)/[116649493103 (89-68 \,\theta)]$ \\
6. $-16 (8282317305-16521076777 \,\theta+7465597852 \,\theta^2)/[116649493103 (89-68 \,\theta)]$ \\
7. $-16 (8281121234-16468739259 \,\theta+7429678612 \,\theta^2)/[116649493103 (89-68 \,\theta)]$ \\
8. $-16 (8279925163-16418200787 \,\theta+7394988772 \,\theta^2)/[116649493103 (89-68 \,\theta)]$ \\
9. $-16 (8278871670-16374382705 \,\theta+7365737452 \,\theta^2)/[116649493103 (89-68 \,\theta)]$ \\
10. $-16 (8277533021-16318922889 \,\theta+7326838492 \,\theta^2)/[116649493103 (89-68 \,\theta)]$ \\
11. $-16 (8273319049-16150846745 \,\theta+7214750812 \,\theta^2)/[116649493103 (89-68 \,\theta)]$ \\
12. $-(36110836949-24752039219\,\theta+2625422702 \,\theta^2)/[10604499373 (267+142 \,\theta)]$ \\
13. $-16 (8268534765-16412250157 \,\theta+7670254664 \,\theta^2)/[10604499373 (979-800 \,\theta)]$ \\
14. $-16 (8267196116-16365046337 \,\theta+7634419400 \,\theta^2)/[10604499373 (979-800 \,\theta)]$ \\
15. $-16 (8266000045-16325723289 \,\theta+7604998088 \,\theta^2)/[10604499373 (979-800 \,\theta)]$ \\
16. $-48 (2754934658-5429932813 \,\theta+2526047064 \,\theta^2)/[10604499373 (979-800 \,\theta)]$ \\
17. $-16 (8263607903-16255672635 \,\theta+7552641928 \,\theta^2)/[10604499373 (979-800 \,\theta)]$ \\
18. $-16 (8262554410-16225829511 \,\theta+7530841352 \,\theta^2)/[10604499373 (979-800 \,\theta)]$ \\
19. $-48 (2753738587-5396406691 \,\theta+2501000344 \,\theta^2)/[10604499373 (979-800 \,\theta)]$ \\
20. $-[182273159958241-15306516372417582 \,\theta+6788491004299806 \,\theta^2 $\\
\hglue6mm${}+34767918791185696 \,\theta^3-38018026998518400 \,\theta^4+10893384025141248 \,\theta^5$\\
\hglue6mm${}-(104084497-7627696486 \,\theta-5690345714 \,\theta^2+10137326544 \,\theta^3)(427951969 $\\
\hglue6mm${}-54744421840 \,\theta+1809650847808 \,\theta^2-2714674851840 \,\theta^3+1043139723264 \,\theta^4)^{1/2}]$\\
\hglue6mm${}/[21208998746 (4521+3992 \,\theta-6400 \,\theta^2)(151-12746 \,\theta+11088 \,\theta^2)]$ \\
21. $-[182273159958241-15306711758319150 \,\theta+6818034109207070 \,\theta^2 $\\
\hglue6mm${}+34772528150205728 \,\theta^3-38079435575710336 \,\theta^4+10924394297192448 \,\theta^5$\\
\hglue6mm${}-(104084497-7597604710 \,\theta-5663774962 \,\theta^2 +10094728144 \,\theta^3)(427951969$\\
\hglue6mm${} -54744421840 \,\theta+1809650847808 \,\theta^2-2714674851840 \,\theta^3+1043139723264 \,\theta^4)^{1/2}]$\\
\hglue6mm${}/[21208998746 (4521+3992 \,\theta-6400 \,\theta^2)(151-12746 \,\theta+11088 \,\theta^2)]$ \\
22. $-[4489462838770 -354830408369689 \,\theta+778611736746975 \,\theta^2-575165852183354 \,\theta^3$\\
\hglue6mm${}+140999683659200\,\theta^4-(75140611-5687500861 \,\theta-5069198242 \,\theta^2$\\
\hglue6mm${}+8099636800 \,\theta^3)(114287524-161786452\,\theta +57427729 \,\theta^2)^{1/2}]$\\
\hglue6mm${}/[21208998746 (103-80 \,\theta) (133-10326 \,\theta+8960 \,\theta^2)]$ \\
23. $-[4489462838770 -354813995871769 \,\theta+777766476530655 \,\theta^2-573877836710714 \,\theta^3$\\
\hglue6mm${}+140501506020800\,\theta^4- (75140611-5687500861 \,\theta -5110332322 \,\theta^2$\\
\hglue6mm${}+8131585600 \,\theta^3)(114287524-161786452\,\theta+57427729 \,\theta^2)^{1/2}]$\\
\hglue6mm${}/[21208998746 (103-80 \,\theta) (133-10326 \,\theta+8960 \,\theta^2)]$ \\
24. $-(1483508960-2870126013 \,\theta+1320321327\,\theta^2)/[10604499373 (11-9 \,\theta)]$\\
25. $-[5322710051893 -377702268115357 \,\theta+840314998731738 \,\theta^2-634018822538972 \,\theta^3$\\
\hglue6mm${}+160148994678440\,\theta^4-(85310017-5767884733 \,\theta-5222494708 \,\theta^2$\\
\hglue6mm${}+8313962452 \,\theta^3)(109743865-151394620\,\theta+51579556 \,\theta^2)^{1/2}]$\\
\hglue6mm${}/[21208998746 (109-86 \,\theta) (151-10500 \,\theta+9116 \,\theta^2)]$ \\
26. $-16 (8250580261-15923807475 \,\theta+7305460616 \,\theta^2)/[10604499373 (979-800 \,\theta)]$ \\
27. $-48 (2749794730-5299558967 \,\theta+2428916504 \,\theta^2)/[10604499373 (979-800 \,\theta)]$ \\
28. $-(1483074080-2860466813 \,\theta+1313113647 \,\theta^2)/[10604499373 (11-9 \,\theta)]$ \\
29. $-(1482856640-2856535613 \,\theta+1310189007 \,\theta^2)/[10604499373 (11-9 \,\theta)]$ \\
30. $-(1482613280-2852155133 \,\theta+1306841007 \,\theta^2)/[10604499373 (11-9 \,\theta)]$ \\
31. $-[32 (348822690425 - 777921507937 \,\theta + 479648251884 \,\theta^2 -   64270064640 \,\theta^3) $\\
\hglue6mm${}+ 8 \sqrt{649} (50635200 + 912502649 \,\theta + 3285164672 \,\theta^2 - 3652485120 \,\theta^3)]$\\
\hglue6mm${}/[53022496865 (16577-18304 \,\theta+3840 \,\theta^2)]$ \\
32. $-[16 (697379854985 - 1551021662842 \,\theta + 954693068760 \,\theta^2 - 
127795740672 \,\theta^3) $\\
\hglue6mm${}+8 \sqrt{649} (34841840+ 1031176697 \,\theta + 3154719360 \,\theta^2 - 3611025408 \,\theta^3)]$\\
\hglue6mm${}/[53022496865 (16577-18304 \,\theta+3840\,\theta^2)]$ \\
33. $-[128 (87139291140 - 192958144129 \,\theta + 118491730235 \,\theta^2 - 
15874634304 \,\theta^3)$\\
\hglue6mm${} + 8 \sqrt{649} (19048480 + 933228537 \,\theta + 3211879040 \,\theta^2 - 3577602048 \,\theta^3)]$\\
\hglue6mm${}/[53022496865 (16577-18304 \,\theta+3840\,\theta^2)]$ \\
34. $-\{20309243048887 -1440900935249679 \,\theta+3228035909983198 \,\theta^2$\\
\hglue6mm${}-2454540613665204 \,\theta^3 +626572756041016 \,\theta^4- (85310017-5784669213 \,\theta$\\
\hglue6mm${}-5045929108 \,\theta^2+8237965332 \,\theta^3) [11 (148467731-217532180\,\theta +79180940 \,\theta^2)]^{1/2}\} $\\
\hglue6mm${}/[21208998746 (415-338 \,\theta) (151-10500 \,\theta+9116 \,\theta^2)]$ \\
35. $-\{20309243048887 -1440793288505679 \,\theta+3226528516783198 \,\theta^2 $\\
\hglue6mm${}-2452281458433204 \,\theta^3+625678662153016\,\theta^4-(85310017-5787285213 \,\theta $\\
\hglue6mm${}-5013913108 \,\theta^2+8211853332 \,\theta^3)[11(148467731-217532180\,\theta+79180940 \,\theta^2)]^{1/2}\} $\\
\hglue6mm${}/[21208998746 (415-338 \,\theta) (151-10500 \,\theta+9116 \,\theta^2)]$ \\
36. $-[5329412012587 +152061563180829 \,\theta-354738015046902 \,\theta^2+264529548416428 \,\theta^3$\\
\hglue6mm${}-64526904997632 \,\theta^4 - (95479423+3012939849 \,\theta+2376347652 \,\theta^2 $\\
\hglue6mm${}-4151180264 \,\theta^3) (122583865-197073880\,\theta+79914256 \,\theta^2)^{1/2}] $\\
\hglue6mm${}/[21208998746 (94-77 \,\theta) (169+5048 \,\theta-4532 \,\theta^2)]$ \\
37. $-[32 (348373526061 - 772940260925 \,\theta + 474517411564 \,\theta^2 - 
63071502336 \,\theta^3)  $\\
\hglue6mm${}-8 \sqrt{649} (2796992- 1752524185 \,\theta - 2479235712 \,\theta^2 +3512721408 \,\theta^3)] $\\
\hglue6mm${}/[53022496865 (16577-18304 \,\theta+3840\,\theta^2)]$ \\
38. $-[23553006170785 -527851874536246 \,\theta+1112934522007627 \,\theta^2 $\\
\hglue6mm${}-821157098602170 \,\theta^3+204411986805960\,\theta^4- (370053385-7164995911 \,\theta $\\
\hglue6mm${}-6597893474 \,\theta^2+10227090552 \,\theta^3) (105051121-132870958\,\theta+40507681 \,\theta^2)^{1/2}] $\\
\hglue6mm${}/[42417997492 (56-43 \,\theta) (655-13778 \,\theta+11616 \,\theta^2)]$ \\
39. $-[23553006170785 -527756742748726 \,\theta+1111714661263627 \,\theta^2 $\\
\hglue6mm${}-819355115214330 \,\theta^3+203708029342920\,\theta^4-(370053385-7172188231 \,\theta $\\
\hglue6mm${}-6529352354 \,\theta^2+10173238392 \,\theta^3) (105051121-132870958\,\theta+40507681 \,\theta^2)^{1/2}] $\\
\hglue6mm${}/[42417997492 (56-43 \,\theta) (655-13778 \,\theta+11616 \,\theta^2)]$ \\
40. $-[23553006170785 -527585747924726 \,\theta+1109586504147467 \,\theta^2 $\\
\hglue6mm${}-816239604678970 \,\theta^3+202500335146440\,\theta^4- (370053385-7185812231 \,\theta $\\
\hglue6mm${}-6397645794 \,\theta^2+10069009272 \,\theta^3) (105051121-132870958\,\theta+40507681 \,\theta^2)^{1/2}] $\\
\hglue6mm${}/[42417997492 (56-43 \,\theta) (655-13778 \,\theta+11616 \,\theta^2)]$ \\
41. $-\{20309243048887 -1439027882333519 \,\theta+3203634487118238 \,\theta^2 $\\
\hglue6mm${}-2417704373375924 \,\theta^3+611792243241016\,\theta^4- (85310017-5829004253 \,\theta $\\
\hglue6mm${}-4616563668 \,\theta^2+7897739412 \,\theta^3)[11(148467731-217532180\,\theta+79180940 \,\theta^2)]^{1/2}\} $\\
\hglue6mm${}/[21208998746 (415-338 \,\theta) (151-10500 \,\theta+9116 \,\theta^2)]$ \\
42. $-\{20309243048887 -1438587801175119 \,\theta+3198103084836958 \,\theta^2 $\\
\hglue6mm${}-2409347924803764 \,\theta^3+608426760201016\,\theta^4- (85310017-5839349853 \,\theta $\\
\hglue6mm${}-4525632148 \,\theta^2+7826666772 \,\theta^3)[11(148467731-217532180\,\theta+79180940 \,\theta^2)]^{1/2}\} $\\
\hglue6mm${}/[21208998746 (415-338 \,\theta) (151-10500 \,\theta+9116 \,\theta^2)]$ \\
43. $-[68173002801085 -361842641497321 \,\theta+593242021943304 \,\theta^2 $\\
\hglue6mm${}-386877538159088 \,\theta^3+87113744858408\,\theta^4-(370053385-798445141 \,\theta $\\
\hglue6mm${}-1479064094 \,\theta^2+1473437652 \,\theta^3) (1470229561-2720513020\,\theta+1260212260 \,\theta^2)^{1/2}] $\\
\hglue6mm${}/[21208998746 (307-284 \,\theta) (655-2548 \,\theta+1756 \,\theta^2)]$ \\
44. $-[68173002801085 -362031155003721 \,\theta+595663749431624 \,\theta^2 $\\
\hglue6mm${}-390766555977328 \,\theta^3+88792272698408\,\theta^4- (370053385-790794741 \,\theta $\\
\hglue6mm${}-1548256414 \,\theta^2+1527767892 \,\theta^3)\ (1470229561-2720513020\,\theta+1260212260 \,\theta^2)^{1/2}] $\\
\hglue6mm${}/[21208998746 (307-284 \,\theta) (655-2548 \,\theta+1756 \,\theta^2)]$ \\
45. $-\{5322710051893 -297802681147878 \,\theta+658095314726559 \,\theta^2-494848115914946 \,\theta^3$\\
\hglue6mm${}+124921046322012\,\theta^4- (85310017-4578641331 \,\theta-3395743050 \,\theta^2 $\\
\hglue6mm${}+5999348908 \,\theta^3)[5(21948773-32055266 \,\theta+11608013 \,\theta^2)]^{1/2}\} $\\
\hglue6mm${}/[21208998746 (109-89 \,\theta) (151-8254 \,\theta+7144 \,\theta^2)]$ \\
46. $-[20000227674700 +167647279856975 \,\theta-442588081074173 \,\theta^2 $\\
\hglue6mm${}+346607935245942 \,\theta^3-88242237094272\,\theta^4- 2 (370053385 + 4365509747 \,\theta  $\\
\hglue6mm${}+ 1974347548 \,\theta^2 - 5032341864 \,\theta^3) (25153300-40965940 \,\theta+16732321 \,\theta^2)^{1/2}] $\\
\hglue6mm${}/[10604499373 (185-157 \,\theta) (655+6436 \,\theta-6132 \,\theta^2)]$ \\
47. $-(63788826426-2377311861310 \,\theta+2151107563969 \,\theta^2-953147513920 \,\theta^3)$\\
\hglue6mm${}/[21208998746 (253+28\,\theta) (1-35 \,\theta)]$ \\
48. $-\{3194496825-9925798980 \,\theta+8807429906 \,\theta^2-1561359016 \,\theta^3  $\\
\hglue6mm${}+ 80 [114\,\theta (1 - \theta) (340880 - 429027 \,\theta - 1074404 \,\theta^2) (2824835 + 4392257 \,\theta  $\\
\hglue6mm${}- 1954788 \,\theta^2)]^{1/2}\}/[10604499373 (5-4 \,\theta)^2]$\\
49. $-[120603536706835 -782453418819761 \,\theta+1302773788384418 \,\theta^2 $\\
\hglue6mm${}-716442562381600 \,\theta^3+118117725349440\,\theta^4-(370053385-1392215891 \,\theta $\\
\hglue6mm${}-2507482208 \,\theta^2+1749863488 \,\theta^3) (208536601+2854478852\,\theta-1580996444 \,\theta^2)^{1/2}] $\\
\hglue6mm${}/[21208998746 (637-356 \,\theta) (655-3238 \,\theta+1624 \,\theta^2)]$ \\
50. $-(565851624800-1094725361473 \,\theta+522472722365 \,\theta^2) $\\
\hglue6mm${}/[116649493103 (385-349 \,\theta)]$ \\
51. $-\{16275482080-32107361753 \,\theta+14336234534 \,\theta^2+389285192 \,\theta^3  $\\
\hglue6mm${}+ 96 [-95 \,\theta (1 - \theta) (103136- 2222227 \,\theta - 
   1082268 \,\theta^2) (6214637 + 4078519 \,\theta  $\\
\hglue6mm${}- 6682468 \,\theta^2)]^{1/2}\}/[10604499373 (11-4 \,\theta)^2]$ \\
52. $-\{141339081-3032859092 \,\theta+10751305280 \,\theta^2-5503495424 \,\theta^3 $\\
\hglue6mm${}+ 60 [-57 \,\theta (1 - \theta) (88112- 4797177 \,\theta - 3428792 \,\theta^2) (564967 + 105761 \,\theta  $\\
\hglue6mm${}- 4281416 \,\theta^2)]^{1/2}\}/[10604499373 (1-8 \,\theta)^2]$ \\
53. $-\{3194496825-25668523940 \,\theta+34159562304 \,\theta^2-13235416704 \,\theta^3 $\\
\hglue6mm${}+ 60 [665 \,\theta (1 - \theta) (342800 + 4906909 \,\theta + 2888148 \,\theta^2) (2824835 + 4597481 \,\theta  $\\
\hglue6mm${}- 5411244 \,\theta^2)]^{1/2}\}/[10604499373 (5-12 \,\theta)^2]$\\
54. $-(1798055025-55645273722 \,\theta+4892388025 \,\theta^2-464320928\,\theta^3) $\\
\hglue6mm${}/[10604499373 (15+13 \,\theta) (1-29 \,\theta)]$ \\
55. $-\{3194496825-25682923940 \,\theta+34101271104 \,\theta^2-13162725504 \,\theta^3 $\\
\hglue6mm${}+60 [665 \,\theta (1 - \theta) (340880 + 4964509 \,\theta + 2832468 \,\theta^2) (2824835 + 4626281 \,\theta  $\\
\hglue6mm${}- 5386284 \,\theta^2)]^{1/2}\}/[10604499373 (5-12 \,\theta)^2]$\\
56. $-\{16275482080-32107361753 \,\theta+14336234534 \,\theta^2+389285192 \,\theta^3 $\\
\hglue6mm${}+ 96 [-95 \,\theta (1 - \theta) (103136- 2222227 \,\theta - 1082268 \,\theta^2) (6214637 + 4078519 \,\theta  $\\
\hglue6mm${}- 6682468 \,\theta^2)]^{1/2}\}/[10604499373 (11-4 \,\theta)^2]$\\
57. $-[48738877667025 +345584169500420 \,\theta-596777706348187 \,\theta^2 $\\
\hglue6mm${}+195701232893070 \,\theta^3+28132085064168 \,\theta^4- (370053385+3758736917 \,\theta $\\
\hglue6mm${}+263034346 \,\theta^2 -2465827176 \,\theta^3)(284799825-251288370\,\theta+25348129 \,\theta^2)^{1/2}] $\\
\hglue6mm${}/[169671989968 (30-23 \,\theta) (655+5746 \,\theta+312 \,\theta^2)]$ \\
58. $-(565466484800-1091244233473 \,\theta+519708164765 \,\theta^2) $\\
\hglue6mm${}/[116649493103 (385-349 \,\theta)]$ \\
59. $-\{5322710051893 -297705423490278 \,\theta+656923430851359 \,\theta^2 $\\
\hglue6mm${}-493079836081346 \,\theta^3+124207212277212\,\theta^4- (85310017-4587310131 \,\theta $\\
\hglue6mm${}-3324426570 \,\theta^2+5944066348 \,\theta^3)[5(21948773-32055266\,\theta+11608013 \,\theta^2)]^{1/2}\} $\\
\hglue6mm${}/[21208998746 (109-89 \,\theta) (151-8254 \,\theta+7144 \,\theta^2)]$ \\
60. $-[44678744987240 -83114516505921 \,\theta+41077048614881 \,\theta^2 $\\
\hglue6mm${}+4331079222774 \,\theta^3-5718669106624\,\theta^4-(370053385+487409781 \,\theta $\\
\hglue6mm${}-597999906 \,\theta^2-130739312 \,\theta^3) (587196736-1020241072\,\theta+444774961 \,\theta^2)^{1/2}] $\\
\hglue6mm${}/[21208998746 (203-178 \,\theta) (655-302 \,\theta-216 \,\theta^2)]$ \\
61. $-\{5322710051893 -297626357346278 \,\theta+656086970692959 \,\theta^2 $\\
\hglue6mm${}-491838177874946 \,\theta^3+123709134101212 \,\theta^4-(85310017-4594432691 \,\theta $\\
\hglue6mm${}-3273415690 \,\theta^2+5905219628 \,\theta^3) [5(21948773-32055266\,\theta+11608013 \,\theta^2)]^{1/2}\} $\\
\hglue6mm${}/[21208998746 (109-89 \,\theta) (151-8254 \,\theta+7144 \,\theta^2)]$ \\
62. $-(1468612160-2701497693 \,\theta+1198549167 \,\theta^2)/[10604499373 (11-9 \,\theta)]$ \\
63. $-\{5322710051893 -297606638147558 \,\theta+655451434380639 \,\theta^2 $\\
\hglue6mm${}-490863884442626 \,\theta^3+123326584361692 \,\theta^4- (85310017-4594432691 \,\theta $\\
\hglue6mm${}-3316945930 \,\theta^2+5940762668 \,\theta^3)[5(21948773-32055266 \,\theta+11608013 \,\theta^2)]^{1/2}\} $\\
\hglue6mm${}/[21208998746 (109-89 \,\theta) (151-8254 \,\theta+7144 \,\theta^2)]$ \\
64. $-[16468011103544 +82114052344397 \,\theta-236988968770905 \,\theta^2 $\\
\hglue6mm${}+188294230451990 \,\theta^3-47827705938112 \,\theta^4 -(74010677+612854261 \,\theta $\\
\hglue6mm${}+242478922 \,\theta^2-686992448 \,\theta^3) (1781786944-2909303536 \,\theta+1192851481 \,\theta^2)^{1/2}] $\\
\hglue6mm${}/[21208998746 (379-320 \,\theta) (131+838 \,\theta-832 \,\theta^2)]$ \\
65. $-[16468011103544 +82054568870477 \,\theta-236900431643993 \,\theta^2 $\\
\hglue6mm${}+188240196751126 \,\theta^3-47809397145792 \,\theta^4- (74010677+612854261 \,\theta $\\
\hglue6mm${}+232388426 \,\theta^2-678472768 \,\theta^3)(1781786944-2909303536 \,\theta+1192851481 \,\theta^2)^{1/2}] $\\
\hglue6mm${}/[21208998746 (379-320 \,\theta) (131+838 \,\theta-832 \,\theta^2)]$ \\
66. $-[563792175786385-6159103136740412 \,\theta+14053311335243944 \,\theta^2 $\\
\hglue6mm${}-14177675117512308 \,\theta^3+7181287047464888 \,\theta^4- 1527132941140224 \,\theta^5 $\\
\hglue6mm${}-(221620279-1747864990 \,\theta-2155536852 \,\theta^2+2934396812 \,\theta^3) (4342414609 $\\
\hglue6mm${}-75288376852 \,\theta+382769338276 \,\theta^2-478895079168 \,\theta^3+174977224704 \,\theta^4)^{1/2}] $\\
\hglue6mm${}/[21208998746 (4521-4192 \,\theta+320 \,\theta^2) (481-4392 \,\theta+3500 \,\theta^2)]$ \\
67. $-[9725507907518-40967030149881 \,\theta+38590577420808 \,\theta^2-33500923350152 \,\theta^3 $\\
\hglue6mm${}+17855101713272 \,\theta^4 - (157222316-1736012551 \,\theta-598081184 \,\theta^2 $\\
\hglue6mm${} +1403775004 \,\theta^3)(3171961-38476724 \,\theta+150204804\,\theta^2)^{1/2}] $\\
\hglue6mm${}/[21208998746 (137-4 \,\theta) (21-16 \,\theta)(13-35 \,\theta)]$ \\
68. $-[419244579992-16896227541388 \,\theta+71414079216111 \,\theta^2-70075472766898 \,\theta^3 $\\
\hglue6mm${}+20550223116464 \,\theta^4+(7405088+616604531 \,\theta-912386282 \,\theta^2 $\\
\hglue6mm${}+1384662256 \,\theta^3)(300304-9930856 \,\theta+76356121 \,\theta^2)^{1/2}]$\\
\hglue6mm${}/[84835994984 (1-35 \,\theta) (411-1610 \,\theta +256 \,\theta^2)]$ \\
69. $-\{1242458193-1686200619 \,\theta+294156074 \,\theta^2+112537600 \,\theta^3 $\\
\hglue6mm${}+ 80 [-7 \,\theta (1 - \theta) (1047291 - 819531 \,\theta - 160768 \,\theta^2) (257904 - 1987157 \,\theta  $\\
\hglue6mm${}- 1758400 \,\theta^2)]^{1/2}\}/95440494357 $ \\
70. $-[9724336721918-40984421740921 \,\theta+39026873707208 \,\theta^2-34254130456072 \,\theta^3 $\\
\hglue6mm${}+18190575309432 \,\theta^4-(156564716-1695187591 \,\theta-665898464 \,\theta^2 $\\
\hglue6mm${}+1431424924 \,\theta^3)(3171961-38476724 \,\theta+150204804\,\theta^2)^{1/2}]$\\
\hglue6mm${}/[21208998746 (137-4 \,\theta) (21-16 \,\theta) (13-35 \,\theta)]$ \\
71. $-[4086897252113 -185126390646691 \,\theta+171125184966904 \,\theta^2 $\\
\hglue6mm${}+122491341123472 \,\theta^3-112709050922328 \,\theta^4-(95479423-4182725339 \,\theta$\\
\hglue6mm${}-2610627708 \,\theta^2 +4295381964 \,\theta^3) (927192865-1849866100 \,\theta+922692004 \,\theta^2)^{1/2}] $\\
\hglue6mm${}/[21208998746 (26-25 \,\theta) (169-7556 \,\theta+2592 \,\theta^2)]$ \\
72. $-[563661307507441-6540403995935380 \,\theta+11741670233184172 \,\theta^2$\\
\hglue6mm${}-8611941499123352 \,\theta^3+3593299382911616 \,\theta^4-875197441225952 \,\theta^5-(219634327$\\
\hglue6mm${}-1470282400 \,\theta-1990210056 \,\theta^2+2219021624 \,\theta^3) (4342414609-76590765160 \,\theta$\\
\hglue6mm${}+378797212888 \,\theta^2-429088870560 \,\theta^3+139230322704 \,\theta^4)^{1/2}]$\\
\hglue6mm${}/[21208998746 (4521-3706 \,\theta+128 \,\theta^2) (481-4758 \,\theta+1400 \,\theta^2)]$ \\
73. $-[563093633847121-6533620178412692 \,\theta+11731921432283052 \,\theta^2$\\
\hglue6mm${}-8662080296298136 \,\theta^3+3659622858278016 \,\theta^4-  894593269785824 \,\theta^5-(211019767$\\
\hglue6mm${}-1300824096 \,\theta-2152931976 \,\theta^2+2273100088 \,\theta^3) (4342414609-76590765160 \,\theta$\\
\hglue6mm${}+378797212888 \,\theta^2-429088870560 \,\theta^3+139230322704 \,\theta^4)^{1/2}]$\\
\hglue6mm${}/[21208998746 (4521-3706 \,\theta+128 \,\theta^2) (481-4758 \,\theta+1400 \,\theta^2)]$ \\
74. $-(1982685137441-2426532275238 \,\theta+831447933002 \,\theta^2)$\\
\hglue6mm${}/[116649493103 (1371-352 \,\theta)]$ \\
75. $-(1980688961441-2411936715238 \,\theta+818848549002 \,\theta^2)$\\
\hglue6mm${}/[116649493103 (1371-352 \,\theta)]$ \\
76. $-[1067203846700+37427990066498 \,\theta-44096101156681 \,\theta^2-1266084963690 \,\theta^3 $\\
\hglue6mm${}+4733022672840 \,\theta^4-(106705720-592538297 \,\theta-1277647946 \,\theta^2$\\
\hglue6mm${}+249823560 \,\theta^3)(75076+288796 \,\theta+36666529 \,\theta^2)^{1/2}]$\\
\hglue6mm${}/[42417997492 (1+35 \,\theta) (2055-226 \,\theta-128 \,\theta^2)]$ \\
77. $-32 (21941365264-196646459517 \,\theta+169425586011 \,\theta^2)$\\
\hglue6mm${}/[116649493103 (481-3850 \,\theta)]$ \\
78. $-16 (43871593935-393050171750 \,\theta+337780910546 \,\theta^2)$\\
\hglue6mm${}/[116649493103 (481-3850 \,\theta)]$ \\
79. $-32 (21941365264-196541127732 \,\theta+168784274401 \,\theta^2)$\\
\hglue6mm${}/[116649493103 (481-3850 \,\theta)]$ \\
80. $-128 (1722009436-140774817947 \,\theta+119228476035 \,\theta^2)$\\
\hglue6mm${}/[116649493103 (151-12504 \,\theta)]$ \\
81. $-32 (52048020304-61553110724 \,\theta+57822685369 \,\theta^2)/[116649493103 (1141-18 \,\theta)]$ \\
82. $-16 (104069623035-123184229278 \,\theta+117017230786 \,\theta^2)$\\
\hglue6mm${}/[116649493103 (1141-18 \,\theta)]$ \\
83. $-3 (72252932336+2580286294207 \,\theta-2318275081608 \,\theta^2)$\\
\hglue6mm${}/[116649493103 (169+5446 \,\theta)]$ \\
84. $-16 (13772579385-1119107660632 \,\theta+911813453784 \,\theta^2)$\\
\hglue6mm${}/[116649493103 (151-12504 \,\theta)]$ \\
85. $-16 (13772579385-1110400116280 \,\theta+863758684632 \,\theta^2)$\\
\hglue6mm${}/[116649493103 (151-12504 \,\theta)]$ \\
86. $-32 (6884727824-555062971733 \,\theta+431237474484 \,\theta^2)$\\
\hglue6mm${}/[116649493103 (151-12504 \,\theta)]$ \\
87. $-16 (13755905663-1109265173216 \,\theta+858582102168 \,\theta^2)$\\
\hglue6mm${}/[116649493103 (151-12504 \,\theta)]$ \\
88. $-96 (2290392613-184730166982 \,\theta+142448092436 \,\theta^2)$\\
\hglue6mm${}/[116649493103 (151-12504 \,\theta)]$ \\
89. $-(1791574243056+71575277151 \,\theta-4401093023830 \,\theta^2)$\\
\hglue6mm${}/[116649493103 (1443-320 \,\theta)]$ \\
90. $-32 (71768589237-97002675970 \,\theta+74256652793 \,\theta^2)/[116649493103 (1591-468 \,\theta)]$ \\
91. $-96 (2270491719-181859381018 \,\theta+129586365364 \,\theta^2)$\\
\hglue6mm${}/[116649493103 (151-12504 \,\theta)]$ \\
92. $-16 (13590037297-1089010578726 \,\theta+768914366856 \,\theta^2)$\\
\hglue6mm${}/[116649493103 (151-12504 \,\theta)]$ \\
93. $-16 (13470631933-1081182551966 \,\theta+737784057672 \,\theta^2)$\\
\hglue6mm${}/[116649493103 (151-12504 \,\theta)]$ \\
94. $-32 (6728540974-540145459715 \,\theta+367120604772 \,\theta^2)$\\
\hglue6mm${}/[116649493103 (151-12504 \,\theta)]$ \\
95. $-32 (470039702-321957499 \,\theta)/1283144424133 $\\
96. $-32 (464056790-298455787 \,\theta)/1283144424133 $\\
97. $-16 (926326887-589910792 \,\theta)/1283144424133 $\\
98. $-32 (462522772-292685813 \,\theta)/1283144424133 $\\
99. $-32 (456494932-271475013 \,\theta)/1283144424133 $\\
100. $-16 (910783169-535383252 \,\theta)/1283144424133 $\\
101. $-16 (907840909-525959036 \,\theta)/1283144424133 $\\
102. $-32 (453299997-261013145 \,\theta)/1283144424133 $\\
103. $-16 (903908163-513554378 \,\theta)/1283144424133 $\\
104. $-64 (225656705-127395301 \,\theta)/1283144424133 $\\
105. $-64 (224036216-122516989 \,\theta)/1283144424133 $\\
106. $-16 (882685709-450538356 \,\theta)/1283144424133 $\\
107. $-16 (879112323-440449324 \,\theta)/1283144424133 $\\
108. $-32 (439188379-219216208 \,\theta)/1283144424133 $\\
109. $-32 (429980635-194825504 \,\theta)/1283144424133 $\\
110. $-32 (426739657-186241292\,\theta)/1283144424133 $\\
111. $-112 (121541069-52211686 \,\theta)/1283144424133 $\\
112. $-16 (837756191-331594154 \,\theta)/1283144424133 $\\
113. $-16 (818601887-282073930 \,\theta)/1283144424133 $\\
114. $-128 (101988757-34404919 \,\theta)/1283144424133 $\\
115. $-16 (814123363-270760828 \,\theta)/1283144424133 $\\
116. $-1632 (7964085-2610809 \,\theta)/1283144424133 $\\
117. $-80 (162219151-52654982 \,\theta)/1283144424133$\\
118. $-64 (201959483-63927034 \,\theta)/1283144424133 $\\
119. $-32 (186882616486-271223952336 \,\theta+156733883409 \,\theta^2)$\\
\hglue6mm${}/[116649493103 (4643-3520 \,\theta)]$ \\
120. $-32 (185476855376-270104867480 \,\theta+161643858577 \,\theta^2)$\\
\hglue6mm${}/[116649493103 (4643-3520 \,\theta)]$ \\
121. $-16 (369108410761-535761222278 \,\theta+322776717474 \,\theta^2)$\\
\hglue6mm${}/[116649493103 (4643-3520 \,\theta)]$ \\
122. $-16 (12004171877-810611806806 \,\theta+30230619912 \,\theta^2)$\\
\hglue6mm${}/[116649493103 (151-12504 \,\theta)]$ \\
123. $-32 (5977292569-403731372504 \,\theta+11736304812 \,\theta^2)$\\
\hglue6mm${}/[116649493103 (151-12504 \,\theta)]$ \\
124. $-16 (11904626033-804290238156 \,\theta+16691526648 \,\theta^2)$\\
\hglue6mm${}/[116649493103 (151-12504 \,\theta)]$ \\
125. $-32 (5927519647-400580253840 \,\theta+5282878188 \,\theta^2)$\\
\hglue6mm${}/[116649493103 (151-12504 \,\theta)]$ \\
126. $-\{1006830711+5740109913 \,\theta+6863843632 \,\theta^2-10141233600 \,\theta^3$\\
\hglue6mm${}+ 384 [10 \,\theta (1 - \theta) (508455 + 490401 \,\theta - 3262312 \,\theta^2) (1910976 - 1870549 \,\theta $\\
\hglue6mm${}- 9828920 \,\theta^2)]^{1/2}\}/[10604499373 (3 + 8 \,\theta)^2]$ \\
127. $-[11508761994+364954528953 \,\theta+2056469568736 \,\theta^2+3341872958672 \,\theta^3$\\
\hglue6mm${}-3883623730560 \,\theta^4-7 (9086784 + 296462939 \,\theta - 1056711628 \,\theta^2$\\
\hglue6mm${} + 1058164640 \,\theta^3)(18769+110148 \,\theta-118716\,\theta^2)^{1/2}]$\\
\hglue6mm${}/[21208998746 (1+7 \,\theta) (411+436 \,\theta+1568 \,\theta^2)]$ \\
128. $-\{3 (31584643268923 - 2222801600784000 \,\theta - 
   8399101487371008 \,\theta^2 $\\
\hglue6mm${}+ 21271409223426048 \,\theta^3 - 10663533870379008 \,\theta^4 + 13811163070464 \,\theta^5)$\\
\hglue6mm${}-(2956617537-14919507136 \,\theta+14844772672 \,\theta^2+43327488 \,\theta^3)[(20687 $\\
\hglue6mm${}- 1065024 \,\theta + 1065024 \,\theta^2) (20687 - 1556544 \,\theta + 1556544 \,\theta^2)]^{1/2}\}$\\
\hglue6mm${}/[21208998746 (151-12928 \,\theta+8256 \,\theta^2) (4521+3584 \,\theta-8256 \,\theta^2)]$ \\
129. $-[4975694427636+2036609450871 \,\theta-65883906612104 \,\theta^2+89984612598976 \,\theta^3$\\
\hglue6mm${}-15104796295680 \,\theta^4-3 (378748362 - 1425747809 \,\theta + 680992304 \,\theta^2 $\\
\hglue6mm${}+ 1077007680 \,\theta^3)(9928801-53390544 \,\theta+87503424 \,\theta^2)^{1/2}]$\\
\hglue6mm${}/[21208998746 (69-136 \,\theta) (411+118 \,\theta-1328 \,\theta^2)]$ \\
130. $-\{1002293271+5756543673 \,\theta+6876615472 \,\theta^2-10165901760 \,\theta^3$\\
\hglue6mm${}+384 [30 \,\theta (1 - \theta) (515031 + 492289 \,\theta - 3283560 \,\theta^2) (633312 - 612583 \,\theta$\\
\hglue6mm${} - 3283560 \,\theta^2)]^{1/2}\}/[10604499373 (3+8 \,\theta)^2]$ \\
131. $-\{94430296089921-6646323952883456 \,\theta-25262343281817856 \,\theta^2 $\\
\hglue6mm${}+63817334469402624 \,\theta^3-31908667234701312 \,\theta^4-3 (980324411 - 4975425984 \,\theta$\\
\hglue6mm${} + 4975425984 \,\theta^2) [(20687 - 1065024 \,\theta + 1065024 \,\theta^2) (20687 - 1556544 \,\theta $\\
\hglue6mm${}+ 1556544 \,\theta^2)]^{1/2}\}/[21208998746 (151-12928 \,\theta+8256 \,\theta^2) (4521+3584 \,\theta-8256 \,\theta^2)]$
\end{small}

\newpage
\section*{Appendix C.  Nash equilibrium}

\begin{small}
We list the 102 points of discontinuity in $(0,1/2)$ of the Nash equilibrium.  All are roots of polynomials of degree 4 or less. \medskip

\noindent$\theta_1\approx0.0172597$ (root of $1196071 - 70128084\,\theta + 48076640\,\theta^2$)\\
$\theta_2\approx0.0182052$ (root of $13439 - 747528\,\theta + 512560\,\theta^2$)\\
$\theta_3\approx0.0187178$ (root of $1196071 - 64730946\,\theta + 44388440\,\theta^2$)\\
$\theta_4\approx0.0192602$ (root of $1196071 - 62931900\,\theta + 43159040\,\theta^2$)\\
$\theta_5\approx0.0194839$ (root of $15041 - 782610\,\theta + 546040\,\theta^2$)\\
$\theta_6\approx0.0232232$ (root of $1196071 - 52337518\,\theta + 35919240\,\theta^2$)\\
$\theta_7\approx0.0234507$ (root of $1196071 - 51916370\,\theta + 38924736\,\theta^2$)\\
$\theta_8\approx0.0243083$ (root of $1196071 - 50117324\,\theta + 37567104\,\theta^2$)\\
$\theta_9\approx0.0243618$ (root of $1053493 - 44035598\,\theta + 32510784\,\theta^2$)\\
$\theta_{10}\approx0.0250848$ (root of $1338649 - 54400004\,\theta + 41265792\,\theta^2$)\\
$\theta_{11}\approx0.0254193$ (root of $1053493 - 42236552\,\theta + 31153152\,\theta^2$)\\
$\theta_{12}\approx0.0273040$ (root of $1196071 - 44720186\,\theta + 33494208\,\theta^2$)\\
$\theta_{13}\approx0.0289972$ (root of $1338649 - 47203820\,\theta + 35835264\,\theta^2$)\\
$\theta_{14}\approx0.0311422$ (root of $1196071 - 39323048\,\theta + 29421312\,\theta^2$)\\
$\theta_{15}\approx0.0341664$ (root of $1196071 - 35924850\,\theta + 26856896\,\theta^2$)\\
$\theta_{16}\approx0.0360182$ (root of $1196071 - 34125804\,\theta + 25499264\,\theta^2$)\\
$\theta_{17}\approx0.0362616$ (root of $1053493-29843124\,\theta+21800576\,\theta^2$)\\
$\theta_{18}\approx0.0376433$ (root of $1338649-36609438\,\theta+27840320\,\theta^2$)\\
$\theta_{19}\approx0.0403979$ (root of $13439 - 343008\,\theta + 256000\,\theta^2$)\\
$\theta_{20}\approx0.0425911$ (root of $7097 - 172107\,\theta + 128568\,\theta^2$)\\
$\theta_{21}\approx0.0431219$ (root of $1697705233 - 179649339338\,\theta + 3456239837408\,\theta^2$\\
\hglue1cm${} - 4784841897088\,\theta^3 + 1712192778240\,\theta^4$)\\
$\theta_{22}\approx0.0442392$ (root of $59351003 - 6091261480\,\theta + 115203984016\,\theta^2- 180482084480\,\theta^3 $\\
\hglue1cm${} + 73466830848\,\theta^4$)\\
$\theta_{23}\approx0.0471300$ (root of $128073217 - 12321650132\,\theta + 220062541748\,\theta^2$\\
\hglue1cm${} - 352366745632\,\theta^3 + 146605449216\,\theta^4$)\\
$\theta_{24}\approx0.0573572$ (root of $33780227 - 2723282412\,\theta + 45247029888\,\theta^2- 145206995712\,\theta^3$\\
\hglue1cm${}  + 89048375296\,\theta^4$)\\
$\theta_{25}\approx0.0591403$ (root of $3123737 - 259968848\,\theta + 3687520832\,\theta^2- 3144634368\,\theta^3$\\
\hglue1cm${}  + 325632000\,\theta^4$)\\
$\theta_{26}\approx0.0652644$ (root of $31536199 - 508575192\,\theta + 388700160\,\theta^2$)\\
$\theta_{27}\approx0.0806166$ (root of $31430175 - 414868312\,\theta + 310060800\,\theta^2$)\\
$\theta_{28}\approx0.0812522$ (root of $7245735 - 95040097\,\theta + 72172800\,\theta^2$)\\
$\theta_{29}\approx0.0834260$ (root of $9595069 - 40934083\,\theta - 1023367628\,\theta^2 + 1623091968\,\theta^3$)\\
$\theta_{30}\approx0.0840033$ (root of $28113633 + 9276709160\,\theta - 122239114352\,\theta^2+ 93270978560\,\theta^3 $\\
\hglue1cm${} - 1804308480\,\theta^4$)\\
$\theta_{31}\approx0.0844782$ (root of $21810537 - 5495456260\,\theta + 66071964772\,\theta^2- 48365877760\,\theta^3 $\\
\hglue1cm${} + 1329454080\,\theta^4$)\\
$\theta_{32}\approx0.0847479$ (root of $299335 - 3770281\,\theta + 2810880\,\theta^2$)\\
$\theta_{33}\approx0.0866700$ (root of $6382005 - 78597901\,\theta + 57254400\,\theta^2$)\\
$\theta_{34}\approx0.0986936$ (root of $10476725 - 114510661\,\theta + 84672000\,\theta^2$)\\
$\theta_{35}\approx0.0987732$ (root of $959548 - 29449633\,\theta + 209415867\,\theta^2 - 97344000\,\theta^3$)\\
$\theta_{36}\approx0.0991012$ (root of $176329275 + 7376528780\,\theta - 99541141972\,\theta^2+ 72124260864\,\theta^3 $\\
\hglue1cm${} + 511349760\,\theta^4$)\\
$\theta_{37}\approx0.0992228$ (root of $342436 - 3712083\,\theta + 2629436\,\theta^2$)\\
$\theta_{38}\approx0.100952$ (root of $2767375 - 29557970\,\theta + 21249219\,\theta^2$)\\
$\theta_{39}\approx0.103087$ (root of $7609292 - 79982713\,\theta + 59838606\,\theta^2$)\\
$\theta_{40}\approx0.105963$ (root of $39757085 - 406643392\,\theta + 296755596\,\theta^2$)\\
$\theta_{41}\approx0.111672$ (root of $40843060 - 399958783\,\theta + 306406206\,\theta^2$)\\
$\theta_{42}\approx0.111727$ (root of $140032092 - 1365758465\,\theta + 1006186750\,\theta^2$)\\
$\theta_{43}\approx0.123634$ (root of $4077 - 36744\,\theta + 30475\,\theta^2$)\\
$\theta_{44}\approx0.135877$ (root of $1196071 - 9917626\,\theta + 8206024\,\theta^2$)\\
$\theta_{45}\approx0.135889$ (root of $5188255 - 43001952\,\theta + 35482916\,\theta^2$)\\
$\theta_{46}\approx0.150032$ (root of $1037651 - 7880772\,\theta + 6429244\,\theta^2$)\\
$\theta_{47}\approx0.151654$ (root of $128427 - 966182\,\theta + 786928\,\theta^2$)\\
$\theta_{48}\approx0.151865$ (root of $21806448509 - 253203995032\,\theta + 865393272728\,\theta^2$\\
\hglue1cm${} - 1001495027424\,\theta^3 + 367605859408\,\theta^4$)\\
$\theta_{49}\approx0.152021$ (root of $67821 - 508597\,\theta + 410910\,\theta^2$)\\
$\theta_{50}\approx0.152087$ (root of $99286229 - 3347506740\,\theta + 22154149028\,\theta^2 -  30943670400\,\theta^3 $\\
\hglue1cm${} + 11672260544\,\theta^4$)\\
$\theta_{51}\approx0.152117$ (root of $44952851 - 1492318649\,\theta + 9470092670\,\theta^2 - 11072285484\,\theta^3 $\\
\hglue1cm${} + 3536867064\,\theta^4$)\\
$\theta_{52}\approx0.153828$ (root of $18074088461 - 489486828168\,\theta + 2740068166648\,\theta^2 $\\
\hglue1cm${} - 2125555458848\,\theta^3 + 216860715472\,\theta^4$)\\
$\theta_{53}\approx0.154737$ (root of $1532584065 - 29711728576\,\theta + 153512273652\,\theta^2$\\
\hglue1cm${} - 173198601776\,\theta^3 + 54045846912\,\theta^4$)\\
$\theta_{54}\approx0.154894$ (root of $7693595 - 56442557\,\theta + 43723254\,\theta^2$)\\
$\theta_{55}\approx0.156773$ (root of $190178742 - 3285318023\,\theta + 15032889412\,\theta^2- 11720012508\,\theta^3 $\\
\hglue1cm${} + 917652480\,\theta^4$)\\
$\theta_{56}\approx0.162831$ (root of $70318 - 485511\,\theta + 329568\,\theta^2$)\\
$\theta_{57}\approx0.165259$ (root of $126177 + 1644824 \,\theta - 14573056 \,\theta^2$)\\
$\theta_{58}\approx0.166815$ ($=5621/33696$)\\
$\theta_{59}\approx0.167161$ ($=1127/6742$)\\
$\theta_{60}\approx0.168027$ (root of $9950447-22050316\,\theta-221208996\,\theta^2$)\\
$\theta_{61}\approx0.169559$ (root of $9950447-331023552\,\theta+1606163556\,\theta^2$)\\
$\theta_{62}\approx0.170532$ ($=5772/33847$)\\
$\theta_{63}\approx0.184790$ ($=6364/34439$)\\
$\theta_{64}\approx0.187345$ ($=151/806$)\\
$\theta_{65}\approx0.205097$ ($=169/824$)\\
$\theta_{66}\approx0.207326$ ($=19092/92087$)\\
$\theta_{67}\approx0.213355$ ($=131/614$)\\
$\theta_{68}\approx0.214613$ ($=655/3052$)\\
$\theta_{69}\approx0.218968$ ($=1443/6590$)\\
$\theta_{70}\approx0.236123$ ($=1591/6738$)\\
$\theta_{71}\approx0.236633$ ($=655/2768$)\\
$\theta_{72}\approx0.237102$ ($=1443/6086$)\\
$\theta_{73}\approx0.254573$ ($=9588/37663$)\\
$\theta_{74}\approx0.255213$ ($=1591/6234$)\\
$\theta_{75}\approx0.282286$ ($=1141/4042$)\\
$\theta_{76}\approx0.284187$ ($=5796/20395$)\\
$\theta_{77}\approx0.291630$ ($=655/2246$)\\
$\theta_{78}\approx0.312202$ ($=655/2098$)\\
$\theta_{79}\approx0.315534$ ($=65/206$)\\
$\theta_{80}\approx0.317736$ ($=2397/7544$)\\
$\theta_{81}\approx0.322499$ ($=1141/3538$)\\ 
$\theta_{82}\approx0.332182$ ($=481/1448$)\\
$\theta_{83}\approx0.340483$ ($=2397/7040$)\\
$\theta_{84}\approx0.354185$ ($=1591/4492$)\\
$\theta_{85}\approx0.364699$ ($=655/1796$)\\
$\theta_{86}\approx0.377510$ ($=44268/117263$)\\
$\theta_{87}\approx0.377551$ ($=37/98$)\\
$\theta_{88}\approx0.384504$ ($=799/2078$)\\
$\theta_{89}\approx0.384544$ ($=2901/7544$)\\
$\theta_{90}\approx0.386798$ ($=46044/119039$)\\
$\theta_{91}\approx0.393855$ ($=141/358$)\\
$\theta_{92}\approx0.398947$ ($=1591/3988$)\\
$\theta_{93}\approx0.400756$ ($=1591/3970$)\\
$\theta_{94}\approx0.409866$ ($=1105/2696$)\\
$\theta_{95}\approx0.430543$ ($=967/2246$)\\
$\theta_{96}\approx0.432877$ ($=55716/128711$)\\
$\theta_{97}\approx0.433673$ ($=85/196$)\\
$\theta_{98}\approx0.440212$ ($=2901/6590$)\\
$\theta_{99}\approx0.452435$ ($=799/1766$)\\
$\theta_{100}\approx0.454289$ ($=805/1772$)\\
$\theta_{101}\approx0.498116$ ($=793/1592$)\\
$\theta_{102}\approx0.498130$ ($=799/1604$)\\
\end{small}\bigskip

\begin{small}
Banker's Nash equilibrium strategy (excluding mixing probabilities).  Pairs followed by a colon are Player~1's and Player~2's third-card values (10 = stand, 11 = natural).  Entries are maximum Banker drawing totals (e.g., 5 means that Banker draws on 5 or less and stands on 6 or 7).  Plus signs indicate that Banker mixes on next higher total (e.g., 5+ means Banker draws on 5 or less, mixes on 6, and stands on 7).  Number ranges in parentheses refer to the 103 intervals in $(0,1/2]$, with interval $i$ being $(\theta_{i-1},\theta_i)$; here $\theta_0=0$ and $\theta_{103}=1/2$.\medskip

$(0,0)$: 3.  
$(0,1)$: 3.  
$(0,2)$: 4 (1--97), 3 (98--103).  
$(0,3)$: 4.  
$(0,4)$: 5 (1--11), 4 (12--103).  
$(0,5)$: 5 (1--83), 4 (84--103).  
$(0,6)$: 6 (1--12),  5 (13--89), 4 (90--103).  
$(0,7)$: 6 (1--34), 5+ (35--39), 5 (40--42), 4 (43--78), 3 (79--103).  
$(0,8)$: 2 (1--28), 2+ (29--30), 3 (31--39), 2+ (40--44), 3 (45--103).  
$(0,9)$: 3.  
$(0,10)$: 5.  
$(0,11)$: 3.

$(1,0)$: 3.  
$(1,1)$: 3.  
$(1,2)$: 4.  
$(1,3)$: 4.  
$(1,4)$: 5 (1--9), 4 (10--103).  
$(1,5)$: 5 (1--91), 4 (92--103).  
$(1,6)$: 6 (1--4), 5 (5--98), 4 (99--103).  
$(1,7)$: 6 (1--27), 5 (28--40), 4+ (41), 4 (42--103).  
$(1,8)$: 2 (1--23), 3 (24--103).  
$(1,9)$: 3.  
$(1,10)$: 5.  
$(1,11)$: 3.

$(2,0)$: 3.  
$(2,1)$: 3 (1--52), 4 (53--103).  
$(2,2)$: 4.  
$(2,3)$: 4.  
$(2,4)$: 5 (1--17), 4 (18--103).  
$(2,5)$: 5 (1--102), 4 (103).  
$(2,6)$: 6 (1--3), 5 (4--103).  
$(2,7)$: 6 (1--29), 5 (30--31), 5+ (32), 5 (33--46), 4 (47--48), 5 (49--67), 4 (68--103).  
$(2,8)$: 2 (1--14), 3 (15--103).  
$(2,9)$: 3.  
$(2,10)$: 5.  
$(2,11)$: 4.

$(3,0)$: 3 (1--87), 4 (88--103).  
$(3,1)$: 3 (1--24), 4 (25--103).  
$(3,2)$: 4.  
$(3,3)$: 4.  
$(3,4)$: 5 (1--33), 4 (34--103).  
$(3,5)$: 5.  
$(3,6)$: 6 (1--8), 5 (9--103).  
$(3,7)$: 6 (1--38), 5 (39--85), 4 (86--103).  
$(3,8)$: 2 (1--4), 3 (5--93), 4 (94--103).  
$(3,9)$: 3.  
$(3,10)$: 5.  
$(3,11)$: 4.

$(4,0)$: 3 (1--72), 4 (73--103).  
$(4,1)$: 3 (1--13), 4 (14--103).  
$(4,2)$: 4.  
$(4,3)$: 4.  
$(4,4)$: 5.  
$(4,5)$: 5.  
$(4,6)$: 6 (1--15), 5 (16--103).  
$(4,7)$: 6 (1--45), 5 (46--103).  
$(4,8)$: 2 (1), 3 (2--74), 4 (75--103).  
$(4,9)$: 3 (1--83), 4 (84--103).  
$(4,10)$: 5.  
$(4,11)$:  4 (1--20), 5 (21--43), 4 (44--49), 5 (50--103). 

$(5,0)$: 3 (1--69), 4 (70--103).  
$(5,1)$: 3 (1--10), 4 (11--103).  
$(5,2)$: 4.  
$(5,3)$: 4 (1--81), 5 (82--103).  
$(5,4)$: 5.  
$(5,5)$: 5.  
$(5,6)$: 6 (1--25), 5 (26--103).  
$(5,7)$: 6 (1--71), 5 (72--103).  
$(5,8)$: 2 (1--6), 2+ (7), 3 (8--70), 4 (71--92), 5 (93--103).  
$(5,9)$: 3 (1--80), 4 (81--103).  
$(5,10)$: 6 (1--7), 5+ (8--25), 5 (26--30), 5+ (31), 5 (32--36), 5+ (37--40), 5 (41--103).  
$(5,11)$: 5.

$(6,0)$: 3 (1--82), 4 (83--103).  
$(6,1)$: 3 (1--18), 4 (19--103).  
$(6,2)$: 4 (1--100), 5 (101--103).  
$(6,3)$: 4 (1--75), 5 (76--103).  
$(6,4)$: 5.  
$(6,5)$: 5.  
$(6,6)$: 6.  
$(6,7)$: 6.  
$(6,8)$: 2 (1--16), 3 (17--84), 5 (85--103).  
$(6,9)$: 3 (1--99), 4 (100--103).  
$(6,10)$: 6.  
$(6,11)$: 6.

$(7,0)$: 3.  
$(7,1)$: 3 (1--35), 4 (36--57), 3 (58--65), 4 (66--103).  
$(7,2)$: 4.  
$(7,3)$: 4.  
$(7,4)$: 5.  
$(7,5)$: 5.  
$(7,6)$: 6.  
$(7,7)$: 6.  
$(7,8)$: 2 (1--37), 3 (38--60), 2 (61--64), 3 (65--103).  
$(7,9)$: 3.  
$(7,10)$: 6.  
$(7,11)$: 6.

$(8,0)$: 3.  
$(8,1)$: 3.  
$(8,2)$: 4 (1--94), 3 (95--103).  
$(8,3)$: 4.  
$(8,4)$: 5 (1--50), 4 (51--103).  
$(8,5)$: 5.  
$(8,6)$: 6 (1--51), 5 (52--103).  
$(8,7)$: 6 (1--77), 3 (78--103).  
$(8,8)$: 2.  
$(8,9)$: 3.  
$(8,10)$: 6 (1--41), 5+ (42--55), 5 (56--103).  
$(8,11)$: 2.

$(9,0)$: 3.  
$(9,1)$: 3.  
$(9,2)$: 4 (1--79), 3 (80--103).  
$(9,3)$: 4 (1--101), 3 (102--103).  
$(9,4)$: 5 (1--21), 4 (22--103).  
$(9,5)$: 5 (1--88), 4 (89--103).  
$(9,6)$: 6 (1--22), 5 (23--95), 4 (96--103).  
$(9,7)$: 6 (1--53), 4 (54--68), 3 (69--103).  
$(9,8)$: 2 (1--61), 3 (62--103).  
$(9,9)$: 3.  
$(9,10)$: 5+ (1--6), 5 (7--103).  
$(9,11)$: 2 (1--26), 2+ (27--28), 2 (29--32), 2+ (33--34), 2 (35--54), 3 (55--103).

$(10,0)$: 3 (1--47), 3+ (48--62), 4 (63--96), 5 (97--103).  
$(10,1)$: 3 (1--5), 4 (6--86), 5 (87--103).  
$(10,2)$: 4 (1--76), 5 (77--103).  
$(10,3)$: 4 (1--59), 5 (60--103).  
$(10,4)$: 5.  
$(10,5)$: 5.  
$(10,6)$: 6 (1--19), 5+ (20--26), 5 (27--56), 5+ (57--58), 6 (59--103).  
$(10,7)$: 6.  
$(10,8)$: 2 (1--2), 3 (3--63), 4 (64--66), 5 (67--103).  
$(10,9)$: 3 (1--73), 4 (74--90), 5 (91--103).  
$(10,10)$: 5 (1--25), 5+ (26--36), 5 (37--55), 5+ (56), 5 (57--62), 5+ (63--103).  
$(10,11)$: 5 (1--62), 5+ (63--103).

$(11,0)$: 3.  
$(11,1)$: 3.  
$(11,2)$: 4.  
$(11,3)$: 4.  
$(11,4)$: 5.  
$(11,5)$: 5.  
$(11,6)$: 6.  
$(11,7)$: 6.  
$(11,8)$: 2.  
$(11,9)$: 3.  
$(11,10)$: 6 (1--58), 5+ (59--103).
\end{small}
\bigskip

The information sets on which Banker mixes in the Nash equilibrium.\medskip

\tabcolsep=1.5mm
\begin{center}
\begin{small}
\begin{tabular}{cc}
\hline
\noalign{\smallskip}
intervals & Banker mixes on \\
\noalign{\smallskip} \hline
\noalign{\smallskip}
1--6   & $(9,10,6)$ \\
7      & $(5,8,3)$ \\
8--19  & $(5,10,6)$ \\
20--25 & $(5,10,6)$, $(10,6,6)$ \\
26     & $(10,6,6)$, $(10,10,6)$ \\
27--28, 33--34 & $(9,11,3)$, $(10,10,6)$ \\
29--30 & $(0,8,3)$, $(10,10,6)$ \\
\noalign{\smallskip}
\hline
\end{tabular}
\end{small}
\end{center}
\tabcolsep=1.5mm
\begin{center}
\begin{small}
\begin{tabular}{cc}
\hline
\noalign{\smallskip}
intervals & Banker mixes on \\
\noalign{\smallskip} \hline
\noalign{\smallskip}
31     & $(5,10,6)$, $(10,10,6)$\\
32     & $(2,7,6)$, $(10,10,6)$ \\
35--36 & $(0,7,6)$, $(10,10,6)$ \\
37--39 & $(0,7,6)$, $(5,10,6)$ \\
40     & $(0,8,3)$, $(5,10,6)$ \\
41     & $(0,8,3)$, $(1,7,5)$ \\
42--44 & $(0,8,3)$, $(8,10,6)$ \\
45--47 & $(8,10,6)$ \\
48--55 & $(8,10,6)$, $(10,0,4)$ \\
56     & $(10,0,4)$, $(10,10,6)$ \\
57--58 & $(10,0,4)$, $(10,6,6)$ \\
59--62 & $(10,0,4)$, $(11,10,6)$ \\
63--103 & $(10,10,6)$, $(10,11,6)$, $(11,10,6)$ \\
\noalign{\smallskip}
\hline
\end{tabular}
\end{small}
\end{center}
\bigskip

\begin{small}
Banker's behavioral strategy in the Nash equilibrium, listed by interval (1--103).  For each interval, we have listed above the two or three information sets on which Banker mixes when $\theta$ belongs to that interval.  Below we list the probabilities that Banker draws in the case of each such information set.  For intervals 63--103, behavioral strategies are non unique, and the two extreme points are given.\medskip

\noindent1. $9299/15664$.\\ 

\noindent2. $116615/203632$.\\

\noindent3. $86663/203632$.\\

\noindent4. $82391/203632$.\\

\noindent5. $73847/203632$.\\

\noindent6. $3959/203632$.\\

\noindent7. $3959/4272$.\\

\noindent8. $203319/203632$.\\

\noindent9. $199047/203632$.\\

\noindent10. $200471/203632$.\\  

\noindent11. $190503/203632$.\\  

\noindent12. $196199/203632$.\\ 

\noindent13. $179111/203632$.\\

\noindent14. $13011/15664$.\\

\noindent15. $164871/203632$.\\

\noindent16. $160599/203632$.\\  

\noindent17. $156327/203632$.\\  

\noindent18. $14341/18512$.\\ 

\noindent19. $147783/203632$.\\

\noindent20.
\begin{eqnarray*}
&&[-2637864161208199 + 165946542578699760\,\theta + 13309178004407488\,\theta^2\\
&&{}- 107899586713936896\,\theta^3 - 14 (9081628501 + 8007062744\,\theta) (427951969 \\
&&{}- 54744421840\,\theta + 1809650847808\,\theta^2 - 2714674851840\,\theta^3 + 1043139723264\,\theta^4)^{1/2}]\\
&&{}/[85488 (91 + 1032\,\theta) (2503127 - 300213592\,\theta+ 262683264\,\theta^2)],\\
&&[4 (-180862924147939 + 10010839445109979\,\theta - 35860055053960408\,\theta^2\\
&&{}+ 21629527664623872\,\theta^3)- 77 (542832349 - 1353122180\,\theta) (427951969\\
&&{}- 54744421840\,\theta + 1809650847808\,\theta^2- 2714674851840\,\theta^3 + 1043139723264\,\theta^4)^{1/2}]\\
&&{}/[85488 (1123 - 896\,\theta) (2503127 - 300213592\,\theta + 262683264\,\theta^2)].
\end{eqnarray*}

\noindent21.
\begin{eqnarray*}
&&[-2627592769801351 + 165310760974835184\,\theta + 12718635757094080\,\theta^2\\
&&{}- 107072930867241984\,\theta^3 - 10 (12664628471 + 11122723528\,\theta) (427951969\\
&&{}- 54744421840\,\theta + 1809650847808\,\theta^2 - 2714674851840\,\theta^3+ 1043139723264\,\theta^4)^{1/2}]\\
&&{}/[85488 (91 + 1032\,\theta) (2503127 - 300213592\,\theta + 262683264\,\theta^2)],\\
&&[4 (-181718873431843 + 10160377178231899\,\theta - 35995096111169752\,\theta^2\\
&&{}+ 21646542336721152\,\theta^3) -55 (756956111 -1884566764\,\theta) (427951969 \\
&&{}- 54744421840\,\theta + 1809650847808\,\theta^2 - 2714674851840\,\theta^3 + 1043139723264\,\theta^4)^{1/2}]\\
&&{}/[85488 (1123 - 896\,\theta) (2503127 - 300213592\,\theta + 262683264\,\theta^2)].
\end{eqnarray*}

\noindent22.
\begin{eqnarray*}
&&[-2603821471987615 + 163767421736128368\,\theta + 15377831007461056\,\theta^2\\
&&{}- 108214447526568960\,\theta^3 - 2 (62751886879 + 56188136456\,\theta) (427951969 \\
&&{}- 54744421840\,\theta + 1809650847808\,\theta^2 - 2714674851840\,\theta^3+ 1043139723264\,\theta^4)^{1/2}]\\
&&{}/[85488 (91 + 1032\,\theta) (2503127 - 300213592\,\theta + 262683264\,\theta^2)],\\
&&[4 (-183602876710045 + 10494021498064759\,\theta - 36567577817322424\,\theta^2 \\
&&{}+ 21891747481896192\,\theta^3)- (41268298721 - 103372711300\,\theta) (427951969 \\
&&{}- 54744421840\,\theta + 1809650847808\,\theta^2 - 2714674851840\,\theta^3 + 1043139723264\,\theta^4)^{1/2}]\\
&&{}/[85488 (1123 - 896\,\theta) (2503127 - 300213592\,\theta + 262683264\,\theta^2)].
\end{eqnarray*}

\noindent23.
\begin{eqnarray*}
&&[-2580322521673303 + 162268998780855024\,\theta + 18537511815172288\,\theta^2 \\
&&{}- 109854062669128704\,\theta^3 - 2 (62174048827 + 56688004520\,\theta) (427951969 \\
&&{}- 54744421840\,\theta + 1809650847808\,\theta^2- 2714674851840\,\theta^3 + 1043139723264\,\theta^4)^{1/2}]\\
&&{}/[85488 (91 + 1032\,\theta) (2503127 - 300213592\,\theta + 262683264\,\theta^2)],\\
&&[4 (-185512999228951 + 10832048893656163\,\theta - 37133170411338520\,\theta^2 \\
&&{}+ 22129161641523456\,\theta^3)- (40898960969 - 103041458452\,\theta) (427951969\\
&&{}- 54744421840\,\theta + 1809650847808\,\theta^2 - 2714674851840\,\theta^3 + 1043139723264\,\theta^4)^{1/2}]\\
&&{}/[85488 (1123 - 896\,\theta) (2503127 - 300213592\,\theta + 262683264\,\theta^2)].
\end{eqnarray*}

\noindent24.
\begin{eqnarray*}
&&[-2562090951567631 + 161111044751770224\,\theta + 21078455591838400\,\theta^2\\
&&{}- 111212335304991744\,\theta^3 - 10 (12344704427 + 11412869800\,\theta) (427951969\\
&&{} - 54744421840\,\theta + 1809650847808\,\theta^2 - 2714674851840\,\theta^3 + 1043139723264\,\theta^4)^{1/2}]\\
&&{}/[85488 (91 + 1032\,\theta) (2503127 - 300213592\,\theta + 262683264\,\theta^2)],\\
&&[4 (-187003003283773 + 11095689602853679\,\theta - 37571927576883832\,\theta^2\\
&&{}+ 22312518601420032\,\theta^3)- 5 (8122171309 - 20555003732\,\theta) (427951969 \\
&&{}- 54744421840\,\theta + 1809650847808\,\theta^2 - 2714674851840\,\theta^3 + 1043139723264\,\theta^4)^{1/2}]\\
&&{}/[85488 (1123 - 896\,\theta) (2503127 - 300213592\,\theta + 262683264\,\theta^2)].
\end{eqnarray*}

\noindent25.
\begin{eqnarray*}
&&[-2554666489378663 + 160678944858495984\,\theta + 22882410863758528\,\theta^2\\
&&{}- 112506021410694144\,\theta^3 - 22 (5592821417 + 5192635960\,\theta) (427951969 \\
&&{}- 54744421840\,\theta + 1809650847808\,\theta^2 - 2714674851840\,\theta^3 + 1043139723264\,\theta^4)^{1/2}]\\
&&{}/[85488 (91 + 1032\,\theta) (2503127 - 300213592\,\theta + 262683264\,\theta^2)],\\
&&[4 (-187678889651131 + 11214940015021963\,\theta - 37750124773351000\,\theta^2\\
&&{}+ 22379970176307456\,\theta^3)- (40480168409 - 102585412852\,\theta) (427951969\\
&&{}- 54744421840\,\theta + 1809650847808\,\theta^2 - 2714674851840\,\theta^3 + 1043139723264\,\theta^4)^{1/2}]\\
&&{}/[85488 (1123 - 896\,\theta) (2503127 - 300213592\,\theta+ 262683264\,\theta^2)].
\end{eqnarray*}

\noindent26.
\begin{eqnarray*}
&&[-109667316291 - 138506682368\,\theta + 11 (703164835 + 889696256\,\theta) \sqrt{201}]\\
&&{}/(1101541376\,\theta),\\
&&[-338546558911 - 401794387496\,\theta + 11 (2172391441 + 2606191832\,\theta) \sqrt{201}]\\
&&{}/(157520416768\,\theta).
\end{eqnarray*}\smallskip

\noindent27. $(15351935 - 35936256\,\theta)/[73216 (457 - 384\,\theta)], \quad 687437/13593008$.\\

\noindent28. $(176493157 - 408201216\,\theta)/[805376 (457 - 384\,\theta)], \quad 603437/13593008$.\\

\noindent29.
\begin{eqnarray*}
&&[2 (-3401657391041 + 68001517760236\,\theta + 65549981371776\,\theta^2\\
&&{}- 18064055992320\,\theta^3) - (234296323 + 229472256\,\theta) (427951969 \\
&&{}- 29310831064\,\theta + 380780676496\,\theta^2 - 295717017600\,\theta^3 + 24787353600\,\theta^4)^{1/2}]\\
&&{}/[88618176 (20687 - 246356\,\theta - 58880\,\theta^2)],\\
&&{}[6992466801009 - 115483316705054\,\theta - 282906672700904\,\theta^2\\
&&{}+ 42627839009280\,\theta^3 + 2 (91888308 + 172409981\,\theta) (427951969\\
&&{}- 29310831064\,\theta + 380780676496\,\theta^2 - 295717017600\,\theta^3 + 24787353600\,\theta^4)^{1/2}]\\
&&{}/[12224784 (91 + 412\,\theta) (20687 - 246356\,\theta - 58880\,\theta^2)].
\end{eqnarray*}

\noindent30.
\begin{eqnarray*}
&&[2 (-1176322005507 + 24301595351492\,\theta + 20841715876992\,\theta^2 \\
&&{}- 6021351997440\,\theta^3) - (86304161 + 76490752\,\theta) (427951969 \\
&&{}- 29310831064\,\theta + 380780676496\,\theta^2 - 295717017600\,\theta^3+ 24787353600\,\theta^4)^{1/2}]\\
&&{}/[29539392 (20687- 246356\,\theta - 58880\,\theta^2)],\\
&&[2475929159803 - 42408289900618\,\theta - 89280316869368\,\theta^2\\
&&{} + 14473862837760\,\theta^3 + 2 (34136636 + 56629727\,\theta) (427951969 \\
&&{}- 29310831064\,\theta + 380780676496\,\theta^2 - 295717017600\,\theta^3+ 24787353600\,\theta^4)^{1/2}]\\
&&{}/[4074928 (91 + 412\,\theta) (20687 - 246356\,\theta - 58880\,\theta^2)].
\end{eqnarray*}

\noindent31.
\begin{eqnarray*}
&&[4 (4139832799 + 5911350336 \,\theta) - (652736883 + 860864512 \,\theta) \sqrt{649}]\\
&&{}/[4816963840 (1 - \theta)],\\
&&[294698354563 - 820517442368 \,\theta -8 (652736883 - 1650119038 \,\theta) \sqrt{649}]\\
&&{}/[5510606632960 (1 - \theta)].
\end{eqnarray*}

\noindent32.
\begin{eqnarray*}
&&[8 (65827819247 + 2722044586946\,\theta + 184409726400\,\theta^2 - 3568752525312\,\theta^3)\\
&&{}- (200226067 + 229472256\,\theta) (322094809 - 5158039588\,\theta \\
&&{}+ 33447454948\,\theta^2 - 17645912064\,\theta^3 + 15479341056\,\theta^4)^{1/2}]\\
&&{}/[12308080 (17947 - 106838\,\theta - 73728\,\theta^2)],\\
&&[-217276306691 + 576449396780\,\theta - 11798152185932\,\theta^2 - 8392742461440\,\theta^3 \\
&&{}+ 2 (12550397 + 5992520\,\theta) (322094809 - 5158039588\,\theta\\
&&{} + 33447454948\,\theta^2 - 17645912064\,\theta^3 + 15479341056\,\theta^4)^{1/2}]\\
&&{}/[4074928 (23 + 50\,\theta) (17947- 106838\,\theta - 73728\,\theta^2)].
\end{eqnarray*}\smallskip

\noindent33. $3 (10536967 - 105961472\,\theta)/[805376 (457 - 384\,\theta)], \quad 24449/1045616$.\\

\noindent34. $3 (16597943 - 110262272\,\theta)/[805376 (457 - 384\,\theta)], \quad 334637/13593008$.\\

\noindent35.
\begin{eqnarray*}
&&\{4384 (45018818755 - 426890812458\,\theta - 385188740352\,\theta^2) + 3 (280115699 \\
&&{}+ 229472256\,\theta)[137 (655- 6738\,\theta) (89735 - 1798626\,\theta + 875520\,\theta^2)]^{1/2}\}\\
&&{}/[49232320 (987085 - 8578230\,\theta - 1575936\,\theta^2)],\\
&&\{2603 (47933200645 - 261429995712\,\theta - 2560125795156\,\theta^2 - 33333700608\,\theta^3)\\
&&{}+ 2 (337872733 + 1269627654\,\theta) [137 (655 - 6738\,\theta) (89735 - 1798626\,\theta \\
&&{}+ 875520\,\theta^2)]^{1/2}\}/[4074928 (115 + 822\,\theta) (987085 - 8578230\,\theta - 1575936\,\theta^2)].
\end{eqnarray*}

\noindent36.
\begin{eqnarray*}
&&\{52608 (3860738335 - 37434676466 \,\theta - 31361654976 \,\theta^2) + 9 (107321057 \\
&&{}+ 76490752 \,\theta)[137 (655 - 6738 \,\theta) (89735 -1798626 \,\theta + 875520 \,\theta^2)]^{1/2}\}\\
&&{}/[49232320 (987085 - 8578230 \,\theta - 1575936 \,\theta^2)],\\
&&\{2603 (52765968805 - 331033911168 \,\theta - 2474953725780 \,\theta^2 - 33333700608 \,\theta^3)\\
&&{} + 2 (407966317 + 1231394790 \,\theta) [137 (655 - 6738 \,\theta) (89735 - 1798626 \,\theta \\
&&{}+ 875520 \,\theta^2)]^{1/2}\}/[4074928 (115 + 822 \,\theta) (987085 - 8578230 \,\theta - 1575936 \,\theta^2)].
\end{eqnarray*}

\noindent37.
\begin{eqnarray*}
&&[798754007281061 - 2562457837338161\,\theta + 1340745216377498\,\theta^2- (77854147501 \\
&&{}- 200906666222\,\theta)(105051121 - 132870958\,\theta + 40507681\,\theta^2)^{1/2}]\\
&&{}/[15782400\,\theta (1916806 - 1701305\,\theta)],\\
&&[-917863534375073 + 922812135152917\,\theta - 271230091643286\,\theta^2 + 7 (12348483719 \\
&&{}- 119039262\,\theta)(105051121 - 132870958\,\theta + 40507681\,\theta^2)^{1/2}]\\
&&{}/[64970880\,\theta (1916806- 1701305\,\theta)].
\end{eqnarray*}

\noindent38.
\begin{eqnarray*}
&&[798754007281061 - 2564493096580721\,\theta + 1342842314219738\,\theta^2- (77854147501 \\
&&{} - 200780801582\,\theta)(105051121 - 132870958\,\theta + 40507681\,\theta^2)^{1/2}]\\
&&{}/[15782400\,\theta (1916806 - 1701305\,\theta)],\\
&&[-917863534375073 + 919027663855957\,\theta - 267703749652566\,\theta^2+ 11 (7858126003 \\
&&{}- 82340214\,\theta)(105051121 - 132870958\,\theta + 40507681\,\theta^2)^{1/2}]\\
&&{}/[64970880\,\theta (1916806- 1701305\,\theta)].
\end{eqnarray*}

\noindent39.
\begin{eqnarray*}
&&[798754007281061 - 2564863112975921\,\theta + 1343521523613338\,\theta^2- (77854147501 \\
&&{} - 200628895982\,\theta)(105051121 - 132870958\,\theta + 40507681\,\theta^2)^{1/2}]\\
&&{}/[15782400\,\theta (1916806 - 1701305\,\theta)],\\
&&[-917863534375073 + 914460198497557\,\theta - 263447819663766\,\theta^2+ (86439386033 \\
&&{}- 993203154\,\theta)(105051121 - 132870958\,\theta + 40507681\,\theta^2)^{1/2}]\\
&&{}/[64970880\,\theta (1916806 - 1701305\,\theta)].
\end{eqnarray*}

\noindent40.
\begin{eqnarray*}
&&\{-284986850133089 + 946557575862016\,\theta - 546718746563388\,\theta^2+ (7051353127 \\
&&{} - 18162548178\,\theta)  [11 (148467731 - 217532180\,\theta + 79180940\,\theta^2)]^{1/2}\}\\
&&{}/[4734720\,\theta (982781 -866742\,\theta)],\\
&&\{-735125564999631 + 174455145453568\,\theta + 271003610668540\,\theta^2+ (18052390551 \\
&&{} + 10500144302\,\theta)[11 (148467731 - 217532180\,\theta + 79180940\,\theta^2)]^{1/2}\}\\
&&{}/[124812480\,\theta (982781 - 866742\,\theta)].
\end{eqnarray*}

\noindent41.
\begin{eqnarray*}
&&(43502459 - 483147725\,\theta + 2962380936\,\theta^2)/[317687040\,\theta (1 - \theta)],\\
&&(-6214637 + 42470875\,\theta + 131926140\,\theta^2)/[8824640\,\theta (1 - \theta)].
\end{eqnarray*}

\noindent42.
\begin{eqnarray*}
&&[-2334903194937404 + 657925124267365\,\theta + 1688047556447796\,\theta^2 + (52396734481 \\
&&{} + 37871699280\,\theta)(1985556496 - 3981633496\,\theta + 1996095769\,\theta^2)^{1/2}]\\
&&{}/[29986560\,\theta (125140 - 114591\,\theta)],\\
&&[281675545213036- 88166644468081\,\theta - 194185450056546\,\theta^2- (6325935499 \\
&&{} + 4202797098\,\theta)(1985556496 - 3981633496\,\theta + 1996095769\,\theta^2)^{1/2}]\\
&&{}/[41604160\,\theta (125140 - 114591\,\theta)].
\end{eqnarray*}

\noindent43.
\begin{eqnarray*}
&&[-2334903194937404 + 737140606492005\,\theta + 1608550893581876\,\theta^2+ (52396734481 \\
&&{} + 36075837520\,\theta)(1985556496 - 3981633496\,\theta + 1996095769\,\theta^2)^{1/2}]\\
&&{}/[29986560\,\theta (125140 - 114591\,\theta)],\\
&&[845026635639108 - 294352979210323\,\theta - 552789239356198\,\theta^2- (18977806497 \\
&&{} + 11981128574\,\theta)(1985556496 - 3981633496\,\theta + 1996095769\,\theta^2)^{1/2}]\\
&&{}/[124812480\,\theta (125140 - 114591\,\theta)]
\end{eqnarray*}

\noindent44.
\begin{eqnarray*}
&&[-2334903194937404 + 721419128102245\,\theta + 1624348604470836\,\theta^2+ (52396734481 \\
&&{} + 36437524560\,\theta)(1985556496 - 3981633496\,\theta + 1996095769\,\theta^2)^{1/2}]\\
&&{}/[29986560\,\theta (125140 - 114591\,\theta)],\\
&&[281675545213036 - 96287248377201\,\theta - 186102366699106\,\theta^2 - (6325935499 \\
&&{} + 4035819818\,\theta)(1985556496 - 3981633496\,\theta + 1996095769\,\theta^2)^{1/2}]\\
&&{}/[41604160\,\theta (125140 - 114591\,\theta)].
\end{eqnarray*}\smallskip

\noindent45. $184759/203632$.\\

\noindent46. $16149/18512$.\\

\noindent47. $170519/203632$.\\

\noindent48.
\begin{eqnarray*}
&&[2 (-22019125783642 + 140415421565879\,\theta + 263496530920338\,\theta^2\\
&&{}- 248956268391456\,\theta^3) - (650262513 + 1334577884\,\theta) (4342414609\\
&&{}- 76590765160\,\theta + 378797212888\,\theta^2 - 429088870560\,\theta^3 + 139230322704\,\theta^4)^{1/2}]\\
&&{}/[199472 (61 + 318\,\theta) (724867 - 2858024\,\theta + 712056\,\theta^2)],\\
&&[2 (903244069293748 - 6198100588189621\,\theta - 449327197747946\,\theta^2\\
&&{}+ 2718572967797496\,\theta^3) + (21439446889 + 13673385976\,\theta) (4342414609 \\
&&{}- 76590765160\,\theta + 378797212888\,\theta^2 - 429088870560\,\theta^3 + 139230322704\,\theta^4)^{1/2}]\\
&&{}/[7180992 (1123 - 734\,\theta) (724867 - 2858024\,\theta + 712056\,\theta^2)].
\end{eqnarray*}

\noindent49.
\begin{eqnarray*}
&&[6 (-7082293386194 + 44723312966413\,\theta + 91113047241366\,\theta^2\\
&&{}- 84728489228832\,\theta^3) - 3 (210501491 + 455548948\,\theta) (4342414609\\
&&{} - 76590765160\,\theta + 378797212888\,\theta^2 - 429088870560\,\theta^3 + 139230322704\,\theta^4)^{1/2}]\\
&&{}/[199472 (61 + 318\,\theta) (724867 - 2858024\,\theta + 712056\,\theta^2)],\\
&&[2 (915459705899648 - 6188482012904981\,\theta - 566736684317146\,\theta^2 \\
&&{}+ 2788009050700536\,\theta^3)+ 3 (7022899163 + 4706836712\,\theta) (4342414609\\
&&{}- 76590765160\,\theta + 378797212888\,\theta^2 - 429088870560\,\theta^3  + 139230322704\,\theta^4)^{1/2}]\\
&&{}/[7180992 (1123 - 734\,\theta) (724867 - 2858024\,\theta + 712056\,\theta^2)].
\end{eqnarray*}

\noindent50.
\begin{eqnarray*}
&&[6 (-6917046905938 + 43284182897933\,\theta + 92146623955094\,\theta^2\\
&&{}- 84770872932896\,\theta^3) - (616458585 + 1367072828\,\theta) (4342414609\\
&&{} - 76590765160\,\theta + 378797212888\,\theta^2 - 429088870560\,\theta^3+ 139230322704\,\theta^4)^{1/2}]\\
&&{}/[199472 (61 + 318\,\theta) (724867 - 2858024\,\theta + 712056\,\theta^2)],\\
&&[2 (928183684879360 - 6196629250920469\,\theta - 554930078113754\,\theta^2\\
&&{}+ 2780013337223928\,\theta^3) + (20682519697 + 14177472184\,\theta) (4342414609 \\
&&{}- 76590765160\,\theta + 378797212888\,\theta^2 - 429088870560\,\theta^3  + 139230322704\,\theta^4)^{1/2}]\\
&&{}/[7180992 (1123 - 734\,\theta) (724867 - 2858024\,\theta + 712056\,\theta^2)].
\end{eqnarray*}

\noindent51.
\begin{eqnarray*}
&&[2 (-19762514649202 + 120562180679207\,\theta + 290222158824114\,\theta^2\\
&&{} - 259869978196320\,\theta^3)-(587389377 + 1397331196\,\theta) (4342414609\\
&&{} - 76590765160\,\theta + 378797212888\,\theta^2 - 429088870560\,\theta^3 + 139230322704\,\theta^4)^{1/2}]\\
&&{}/[199472 (61 + 318\,\theta) (724867 - 2858024\,\theta + 712056\,\theta^2)],\\
&&[6 (316454142117932 - 2064637497306103\,\theta - 218479907076046\,\theta^2\\
&&{}+ 946225401975656\,\theta^3) + (20039736521 + 14638559384\,\theta) (4342414609\\
&&{}- 76590765160\,\theta + 378797212888\,\theta^2 - 429088870560\,\theta^3+ 139230322704\,\theta^4)^{1/2}]\\
&&{}/[7180992 (1123 - 734\,\theta) (724867 - 2858024\,\theta + 712056\,\theta^2)].
\end{eqnarray*}

\noindent52.
\begin{eqnarray*}
&&[6 (-6278522372746 + 36969117047917\,\theta + 101335400112886\,\theta^2\\
&&{} - 88083885950112\,\theta^3) - (556448201 + 1417830780\,\theta) (4342414609 \\
&&{}- 76590765160\,\theta + 378797212888\,\theta^2 - 429088870560\,\theta^3 + 139230322704\,\theta^4)^{1/2}]\\
&&{}/[199472 (61 + 318\,\theta) (724867 - 2858024\,\theta + 712056\,\theta^2)],\\
&&[2 (973259067979016 - 6198361897180373\,\theta - 713373605620858\,\theta^2\\
&&{}+ 2871640643055864\,\theta^3) + 3 (6438154667 + 4996574056\,\theta) (4342414609 \\
&&{}- 76590765160\,\theta + 378797212888\,\theta^2 - 429088870560\,\theta^3 + 139230322704\,\theta^4)^{1/2}]\\
&&{}/[7180992 (1123 - 734\,\theta) (724867 - 2858024\,\theta + 712056\,\theta^2)].
\end{eqnarray*}

\noindent53.
\begin{eqnarray*}
&&[2 (-18248357106122 + 103568483914519\,\theta + 313854074809554\,\theta^2\\
&&{}- 265953817914528\,\theta^3) - (532074257 + 1420623388\,\theta) (4342414609 \\
&&{}- 76590765160\,\theta + 378797212888\,\theta^2 - 429088870560\,\theta^3 + 139230322704\,\theta^4)^{1/2}]\\
&&{}/[199472 (61 + 318\,\theta) (724867 - 2858024\,\theta + 712056\,\theta^2)],\\
&&[6 (331209468345756 - 2070080604896135\,\theta - 234351938675022\,\theta^2\\
&&{} + 954660477078824\,\theta^3) + 11 (1699658779 + 1373370280\,\theta) (4342414609 \\
&&{}- 76590765160\,\theta + 378797212888\,\theta^2 - 429088870560\,\theta^3 + 139230322704\,\theta^4)^{1/2}]\\
&&{}/[7180992 (1123 - 734\,\theta) (724867 - 2858024\,\theta + 712056\,\theta^2)].
\end{eqnarray*}

\noindent54.
\begin{eqnarray*}
&&[6 (-5300007539814 + 26177838556189\,\theta + 116433251864358\,\theta^2 \\
&&{}- 92183059057120\,\theta^3) - (451441537 + 1467760156\,\theta) (4342414609 \\
&&{}- 76590765160\,\theta + 378797212888\,\theta^2 - 429088870560\,\theta^3 + 139230322704\,\theta^4)^{1/2}]\\
&&{}/[199472 (61 + 318\,\theta) (724867 - 2858024\,\theta + 712056\,\theta^2)],\\
&&[6 (352209999291332 - 2075259364504423\,\theta - 275899885540846\,\theta^2 \\
&&{}+ 978020769556136\,\theta^3)  + 223 (75265127 + 71512328\,\theta) (4342414609 \\
&&{}- 76590765160\,\theta + 378797212888\,\theta^2 - 429088870560\,\theta^3 + 139230322704\,\theta^4)^{1/2}]\\
&&{}/[7180992 (1123 - 734\,\theta) (724867 - 2858024\,\theta + 712056\,\theta^2)].
\end{eqnarray*}

\noindent55.
\begin{eqnarray*}
&&[6 (-3482296256998 + 10347407802909\,\theta + 127802595715366\,\theta^2\\
&&{}- 92649279801824\,\theta^3) - (285936769 + 1472445980\,\theta) (4342414609 \\
&&{}- 76590765160\,\theta + 378797212888\,\theta^2 - 429088870560\,\theta^3 + 139230322704\,\theta^4)^{1/2}]\\
&&{}/[199472 (61 + 318\,\theta) (724867 - 2858024\,\theta + 712056\,\theta^2)],\\
&&[2 (1196593766650828 - 6315397711683637\,\theta - 697826988385226\,\theta^2\\
&&{}+ 2846109460425720\,\theta^3) + (12536167609 + 16573831672\,\theta) (4342414609 \\
&&{}- 76590765160\,\theta + 378797212888\,\theta^2 - 429088870560\,\theta^3 + 139230322704\,\theta^4)^{1/2}]\\
&&{}/[7180992 (1123 - 734\,\theta) (724867 - 2858024\,\theta + 712056\,\theta^2)].
\end{eqnarray*}

\noindent56.
\begin{eqnarray*}
&&[2329241845 + 8390949192\,\theta - 10477043712\,\theta^2 - (801955 + 2285568\,\theta)(6604081 \\
&&{} - 15886512\,\theta + 21013056\,\theta^2)^{1/2}]/(19696435200\,\theta),\\
&&[-475850935 - 374893316\,\theta + 873086976\,\theta^2 + 3 (72905 + 63488\,\theta) (6604081\\
&&{}  - 15886512\,\theta + 21013056\,\theta^2)^{1/2}]/(6519884800\,\theta).
\end{eqnarray*}

\noindent57.
\begin{eqnarray*}
&&(-36087975 + 1526749327\,\theta - 422825752\,\theta^2)/[3718840320\,\theta (1 - \theta)],\\
&&(-3280725 + 1561229\,\theta + 357664696\,\theta^2)/[103301120\,\theta (1 - \theta)].
\end{eqnarray*}

\noindent58.
\begin{eqnarray*}
&&(-28492695 + 1664069887\,\theta - 567741592\,\theta^2)/[3718840320\,\theta (1 - \theta)],\\
&&3 (-863415 + 3477663\,\theta + 116034152\,\theta^2)/[103301120\,\theta (1 - \theta)].
\end{eqnarray*}\smallskip

\noindent59. $4783555/8409024, \quad (33087139 - 267345726\,\theta)/[73216 (481 - 3850\,\theta)]$.\\

\noindent60. $5017139/8409024, \quad (33087139 - 265045822\,\theta)/[73216 (481 - 3850\,\theta)]$.\\

\noindent61. $5106979/8409024, \quad (433295863 - 3466313478\,\theta)/[951808 (481 - 3850\,\theta)]$.\\

\noindent62. $589403/934336, \quad (430132807 - 3436396070\,\theta)/[951808 (481 - 3850\,\theta)]$.\\

\noindent63. $0, 3104397/10469888, 8706221/10469888;\quad 1034799/3807232, 0, 175057/327184$.\\
  
\noindent64. $0, 5206653/10469888, 9265325/10469888;\quad 1735551/3807232, 0, 253667/654368$.\\
  
\noindent65. $0, 5296493/10469888, 9211421/10469888;\quad 5296493/11421696, 0, 244683/654368$.\\

\noindent66. $0, 5386333/10469888, 9085645/10469888;\quad 5386333/11421696, 0, 231207/654368$.\\

\noindent67. $0, 5152749/10469888, 9644749/10469888;\quad 1717583/3807232, 0, 140375/327184$.\\

\noindent68. $0, 476599/951808, 734993/805376;\quad 476599/1038336, 0, 16845/40898$.\\

\noindent69. $0,5260557/10469888, 9465069/10469888;\quad 1753519/3807232, 0, 10107/25168$.\\

\noindent70. $0, 5332429/10469888, 8818221/10469888;\quad 5332429/11421696, 0, 108931/327184$.\\

\noindent71. $0, 411569/805376, 8943997/10469888;\quad 411569/878592, 0, 28075/81796$.\\

\noindent72. $0, 5332429/10469888, 681089/805376;\quad 5332429/11421696, 0, 55027/163592$.\\

\noindent73. $0, 5260557/10469888, 67829/86528;\quad 1753519/3807232,0, 46043/163592$.\\

\noindent74. $0, 7362813/10469888, 8127437/10469888;\quad 2454271/3807232, 0, 47789/654368$.\\

\noindent75. $0, 7344845/10469888, 8253213/10469888;\quad 7344845/11421696, 0, 56773/654368$.\\

\noindent76. $0, 7398749/10469888, 8199309/10469888;\quad 7398749/11421696, 0, 50035/654368$.\\

\noindent77. $0, 7165165/10469888, 7799949/10469888;\quad 7165165/11421696, 0, 19837/327184$.\\

\noindent78. $0, 6787837/10469888, 7530429/10469888;\quad 6787837/11421696, 0, 11603/163592$.\\

\noindent79. $0, 7434685/10469888, 7171069/10469888;\quad 7171069/11421696, 4119/163592, 0$.\\

\noindent80. $0, 573281/805376, 7260909/10469888;\quad 2420303/3807232, 749/40898, 0$.\\

\noindent81. $0, 7470621/10469888, 7242941/10469888;\quad 7242941/11421696, 7115/327184, 0$.\\

\noindent82. $0, 7488589/10469888, 7189037/10469888;\quad 7189037/11421696, 851/29744, 0$.\\

\noindent83. $0, 7704205/10469888, 6542189/10469888;\quad 6542189/11421696, 36313/327184, 0$.\\

\noindent84. $0, 692215/951808, 6452349/10469888;\quad 2150783/3807232, 36313/327184, 0$.\\

\noindent85. $0, 7722173/10469888, 6703901/10469888;\quad 6703901/11421696, 31821/327184, 0$.\\

\noindent86. $0, 7776077/10469888, 6614061/10469888;\quad 2204687/3807232, 36313/327184, 0$.\\

\noindent87. $0, 7542493/10469888, 6054957/10469888;\quad 2018319/3807232, 92971/654368, 0$.\\

\noindent88. $0, 7326877/10469888, 5408109/10469888;\quad 1802703/3807232, 119923/654368, 0$.\\

\noindent89. $0, 7165165/10469888, 5390141/10469888;\quad 5390141/11421696, 110939/654368, 0$.\\

\noindent90. $0, 7093293/10469888, 5174525/10469888;\quad 5174525/11421696, 119923/654368, 0$.\\

\noindent91. $0, 6859709/10469888, 5893373/10469888;\quad 5893373/11421696, 15099/163592, 0$.\\

\noindent92. $0, 537345/805376, 5875405/10469888;\quad 5875405/11421696, 17345/163592, 0$.\\

\noindent93. $0, 7003453/10469888, 6001181/10469888;\quad 6001181/11421696, 31321/327184, 0$.\\

\noindent94. $0, 6949549/10469888, 6126957/10469888;\quad 2042319/3807232, 12853/163592, 0$.\\

\noindent95. $0, 620343/951808, 6216797/10469888;\quad 6216797/11421696, 2371/40898, 0$.\\

\noindent96. $0, 6662061/10469888, 50933/86528;\quad 560263/1038336, 15599/327184, 0$.\\

\noindent97. $0, 5727725/10469888, 6482381/10469888;\quad 5727725/11421696, 0, 23583/327184$.\\

\noindent98. $0, 6374573/10469888, 6841741/10469888;\quad 6374573/11421696, 0, 1123/25168$.\\

\noindent99. $0, 6500349/10469888, 6787837/10469888;\quad 2166783/3807232, 0, 1123/40898$.\\

\noindent100. $0, 6554253/10469888, 6769869/10469888;\quad 2184751/3807232, 0, 3369/163592$.\\

\noindent101. $0, 6608157/10469888, 6680029/10469888;\quad 2202719/3807232, 0, 1123/163592$.\\

\noindent102. $0, 602375/951808, 6733933/10469888;\quad 602375/1038336, 0, 3369/327184$.\\

\noindent103. $0, 6715965/10469888, 6715965/10469888;\quad 2238655/3807232, 0, 0$.\bigskip

The Players' strategies in the Nash equilibrium. $p_1$ (resp., $p_2$) is the probability that Player 1 (resp., Player 2) draws on 5.\medskip

\noindent(intervals 1--6) 
\begin{eqnarray*}
p_1&=&0,\quad p_2=9(89 - 12\,\theta)/[11(89-68\,\theta)].
\end{eqnarray*}

\noindent(interval 7) 
\begin{eqnarray*}
p_1&=&0,\quad p_2=89 (53 - 534\,\theta)/[16(267+214\,\theta)].
\end{eqnarray*}

\noindent(intervals 8--19) 
\begin{eqnarray*}
p_1&=&0,\quad p_2=3(267-88\,\theta)/(979 - 800\,\theta).
\end{eqnarray*}

\noindent(intervals 20--25)
\begin{eqnarray*}
p_1&=&[-6191 + 962008 \,\theta - 705792 \,\theta^2 - (427951969 - 54744421840 \,\theta\\
&&{}+ 1809650847808 \,\theta^2 - 2714674851840 \,\theta^3+ 1043139723264 \,\theta^4)^{1/2}]\\
&&{}/[32 (151 - 12746 \,\theta + 11088 \,\theta^2)],\\
p_2&=&[139055 - 1171096\,\theta + 945408\,\theta^2 + (427951969 - 54744421840\,\theta \\
&&{}+ 1809650847808\,\theta^2 - 2714674851840\,\theta^3 + 1043139723264\,\theta^4)^{1/2}]\\
&&{} /[32 (4521 + 3992 \,\theta - 6400 \,\theta^2)].
\end{eqnarray*}

\noindent(interval 26) 
\begin{eqnarray*}
p_1&=&3 [151 - 8800 \,\theta + 4128 \,\theta^2 + 352 \sqrt{201}\,\theta(1-\theta)]\\
&&{}/(151 - 13656 \,\theta - 3072 \,\theta^2),\\
p_2&=&(-129 + 11 \sqrt{201})/32.
\end{eqnarray*}

\noindent(intervals 27--28, 33--34) 
\begin{eqnarray*}
p_1&=&71/176,\quad p_2=3(1371-568\,\theta)/[11(457-384\,\theta)].
\end{eqnarray*}

\noindent(intervals 29--30)
\begin{eqnarray*}
p_1&=&[-6191 - 317036 \,\theta + 252672 \,\theta^2 + (427951969 - 29310831064 \,\theta\\
&&{}+ 380780676496 \,\theta^2 - 295717017600 \,\theta^3 + 24787353600 \,\theta^4)^{1/2}]\\
&&{}/[32 (151 - 11410 \,\theta - 2304 \,\theta^2)],\\
p_2&=&[139055 - 476884\,\theta + 252672\,\theta^2 + (427951969 - 29310831064\,\theta \\
&&{}+ 380780676496\,\theta^2 - 295717017600\,\theta^3 + 24787353600\,\theta^4)^{1/2}]\\
&&{}/[32 (4521 - 140 \,\theta - 2304 \,\theta^2)].
\end{eqnarray*}

\noindent(interval 31) 
\begin{eqnarray*}
p_1&=&(-22 + \sqrt{649})/10,\\
p_2&=&[3 (4521 + 408\,\theta - 2816\,\theta^2) - 384\sqrt{649}\,\theta (1 - \theta)]\\
&&{}/(16577 - 18304\,\theta + 3840\,\theta^2).
\end{eqnarray*}

\noindent(interval 32)
\begin{eqnarray*}
p_1&=&[-5371 + 219734 \,\theta - 12288 \,\theta^2 - (322094809 - 5158039588 \,\theta + 33447454948 \,\theta^2\\
&&{} - 17645912064 \,\theta^3 + 15479341056 \,\theta^4)^{1/2}]/[32 (131 + 608 \,\theta + 768 \,\theta^2)],\\
p_2&=&[57403 - 93782\,\theta - 12288\,\theta^2 - (322094809 - 5158039588\,\theta + 33447454948\,\theta^2\\
&&{} - 17645912064\,\theta^3 + 15479341056\,\theta^4)^{1/2}]/[32 (1507 - 1088\,\theta + 768\,\theta^2)].
\end{eqnarray*}

\noindent(intervals 35--36)
\begin{eqnarray*}
p_1&=&\{-26855 + 16098 \,\theta + 410112 \,\theta^2 + [137 (655 - 6738 \,\theta) (89735 - 1798626 \,\theta \\
&&{}+ 875520 \,\theta^2)]^{1/2}\}/[32 (655 - 14928 \,\theta - 2304 \,\theta^2)],\\
p_2&=&\{287015 - 946530\,\theta + 410112\,\theta^2 + [137 (655 - 6738 \,\theta) (89735 - 1798626 \,\theta  \\
&&{}+ 875520 \,\theta^2)]^{1/2}\}/[32 (7535 + 852\,\theta - 2304\,\theta^2)].
\end{eqnarray*}

\noindent(intervals 37--39)
\begin{eqnarray*}
p_1&=&[-58295 + 911362 \,\theta - 690174 \,\theta^2 - 30\,\theta(105051121 - 132870958 \,\theta \\
&&{}+ 40507681 \,\theta^2)^{1/2}]/[16 (655 - 13778 \,\theta + 11616 \,\theta^2)],\\
p_2&=&[-7385 + 6319\,\theta + (105051121 - 132870958\,\theta + 40507681\,\theta^2)^{1/2}]\\
&&{}/[64\,(56 - 43\,\theta)].
\end{eqnarray*}

\noindent(interval 40)
\begin{eqnarray*}
p_1&=&\{-13439 +528700 \,\theta - 400044 \,\theta^2 - 10\,\theta [11(148467731 - 217532180 \,\theta\\
&&{}+ 79180940 \,\theta^2)]^{1/2}\}/[16 (151 - 10500 \,\theta + 9116 \,\theta^2)],\\
p_2&=&\{-29591  + 26434\,\theta +  [11(148467731 - 217532180\,\theta+ 79180940\,\theta^2)]^{1/2}\}\\
&&{}/[32 (415 - 338\,\theta)].
\end{eqnarray*}

\noindent(interval 41)
\begin{eqnarray*}
p_1&=&(-979 + 10596 \,\theta)/[176 (1 + 66 \,\theta)],\quad p_2=(7345 - 26658\,\theta)/[16(340-123\,\theta)].
\end{eqnarray*}

\noindent(intervals 42--44) 
\begin{eqnarray*}
p_1&=&[-13439 + 547264 \,\theta - 467106 \,\theta^2 - 10\,\theta (1985556496 - 3981633496 \,\theta\\
&&{}+ 1996095769 \,\theta^2)^{1/2}]/[16 (151 - 11046 \,\theta + 2264 \,\theta^2)],\\
p_2&=&[-36092 + 36133\,\theta + (1985556496 - 3981633496\,\theta+ 1996095769\,\theta^2)^{1/2}]\\
&&{}/[32 (316 - 317\,\theta)].
\end{eqnarray*}

\noindent(intervals 45--47)
\begin{eqnarray*}
p_1&=&0,\quad p_2=3(267-214\,\theta)/(979 - 926\,\theta).
\end{eqnarray*}

\noindent(intervals 48--55)  
\begin{eqnarray*}
p_1&=&[-19721 + 196244 \,\theta - 262148 \,\theta^2 - (4342414609 - 76590765160 \,\theta\\
&&{}+ 378797212888 \,\theta^2 - 429088870560 \,\theta^3 + 139230322704 \,\theta^4)^{1/2}]\\
&&{}/[32 (481 - 4758 \,\theta + 1400 \,\theta^2)],\\
p_2&=&[184265 - 596692\,\theta + 373764\,\theta^2 + (4342414609 - 76590765160\,\theta\\
&&{} + 378797212888\,\theta^2- 429088870560\,\theta^3 + 139230322704\,\theta^4)^{1/2}]\\
&&{}/[32 (4521 - 3706\,\theta + 128\,\theta^2)].
\end{eqnarray*}

\noindent(interval 56) 
\begin{eqnarray*}
p_1&=&[1443 - 13151 \,\theta + 4584 \,\theta^2 + \theta(6604081 - 15886512 \,\theta + 21013056 \,\theta^2)^{1/2}]\\
&&{}/(481 - 5002 \,\theta),\\
p_2&=&[3559 - 4584\,\theta - (6604081 - 15886512\,\theta + 21013056\,\theta^2)^{1/2}]/800.
\end{eqnarray*}

\noindent(intervals 57--58)
\begin{eqnarray*}
p_1&=&(27 + 64 \,\theta)/(9 + 280 \,\theta),\quad p_2=(943 - 3168\,\theta)/[16 (33 - 8\,\theta)].
\end{eqnarray*}

\noindent(intervals 59--62)
\begin{eqnarray*}
p_1&=&(1443 - 9304 \,\theta)/(481 - 3850 \,\theta),\quad p_2=9/11.
\end{eqnarray*}

\noindent(intervals 63--103)
\begin{eqnarray*}
p_1&=&9/11,\quad p_2=9/11.
\end{eqnarray*}
\end{small}

\end{document}